\newcommand{\vast}{\bBigg@{4.030}}
\newcommand{\Vast}{\bBigg@{12.30}}
\newcommand{\gris}[1]{\textcolor{gray}{#1}}
\newcommand{\azul}[1]{\textcolor{azulESI}{#1}}
\definecolor{azulESI}{HTML}{1266AE}
\definecolor{AZULESI}{HTML}{1266AE}
\newcommand{\includegraphicsc}[2]{\raisebox{-.35\height}{\includegraphics[width=#1cm]{#2} }  }
  \newcommand{\nano}{\@setfontsize\miniscule{3.5}{4.5}}
  \newcommand{\nano}{\@setfontsize\miniscule{4.5}{5.5}}%
  \newcommand{\nano}{\@setfontsize\miniscule{4.5}{5.5}}%
\newcommand{\balita}{\raisebox{1.9pt}{\text{\nano$\bullet$\hspace{.7pt}}}}
\newcommand{\K}{\mathcal{C}}
\newcommand{\balitabaja}{\raisebox{.19pt}{\text{\nano$\bullet$\hspace{.7pt}}}}
\newcommand{\pS}{{\mathscr{pS}}}
\newcommand{\ppS}{{\tilde{\mathscr{p}}\mathscr{S}}}
\newcommand{\inv}{^{-1}}
\newcommand{\sour}{_{s(e)}}
\newcommand{\targ}{_{t(e)}}
\newcommand{\starcirc}{^{\star\!\!\!\circ}}
\newcommand{\kol}{\mathring} 
\newcommand{\Wils}{\mathcal{W}}
\newcommand{\itemb}{\item[\balita]}
\numberwithin{equation}{section}
\newtheoremstyle{mytheoremstyle} 
    {10pt}                    
    {8pt}                    
    {\itshape}                   
    {}                           
    {\scshape}                   
    {.}                          
    {.5em}                       
    {}  
\newcommand{\leqnomode}{\tagsleft@true}
\newcommand{\reqnomode}{\tagsleft@false}
\theoremstyle{mytheoremstyle}
\newtheorem{theorem}{Theorem}[section]
 \newtheorem{corollary}[theorem]{Corollary}
 \newtheorem{lemma}[theorem]{Lemma}
 \newtheorem{claim}[theorem]{Claim}
 \newtheorem{proposition}[theorem]{Proposition}
 \newtheoremstyle{definition} 
    {8pt}                    
    {5pt}                    
    {}                   
    {}                           
    {\scshape}                   
    {.}                          
    {.5em}                       
    {}  
 \theoremstyle{definition}
 \newtheorem{definition}[theorem]{Definition}
 \newtheorem{example}[theorem]{Example}
 \newtheorem{remark}[theorem]{Remark}
 \newtheorem{notation}[theorem]{Notation}
\newcommand{\N}{\mathbb{Z}_{>0}}
\newcommand{\Z}{\mathbb{Z}}
\newcommand{\Zpos}{\mathbb{Z}_{>0}}
\newcommand{\C}{\mathbb{C}}
\newcommand{\R}{\mathbb{R}}
\newcommand{\runter}[1] {\raisebox{-.45\height}{#1}}
\newcommand{\runterhalb}[1] {\raisebox{-.1\height}{#1}}
\newcommand{\sumsub}[1]{\sum_{\substack{#1}}}
\newcommand{\coprodsub}[1]{\coprod_{\substack{#1}}}
\newcommand{\dif}{{\mathrm{d}}}
\newcommand{\symmetr}{^\mathrm{sym}}
\newcommand{\M}[1]{M_{#1}(\mathbb{C})}
\newcommand{\uni}{\mathrm{U}}
\newcommand{\cC}{\mathcal{C}}
\newcommand{\bn}{\mathbf{n}}
\newcommand{\be}{\mathbf{e}}
\newcommand{\bmu}{\boldsymbol{\mu}}
\newcommand{\bv}{\mathbf{v}}
\newcommand{\ba}{\mathbf{a}}
\newcommand{\bm}{\mathbf{m}}
\newcommand{\bq}{\mathbf{q}}
\newcommand{\br}{\mathbf{r}}
\newcommand{\trans}{^{\text{\tiny{$\mathrm T$}}}}
\newcommand{\latt}{^{\text{\tiny{lattice}}}}
\newcommand{\ii}{\mathrm{i}}
\newcommand{\ee}{\mathrm{e}}
\newcommand{\Brat}{\mathscr{B}}
\newcommand{\se}{_{s(e)}}
\newcommand{\te}{_{t(e)}}
\DeclareMathOperator{\Rep}{\mathrm{Rep}}
\DeclareMathOperator{\vol}{\mathrm{vol}}
\DeclareMathOperator{\id}{\mathrm{id}}
\DeclareMathOperator{\hol}{\mathrm{hol}}
\DeclareMathOperator{\ReppS}{\mathrm{Rep}_{\pS}}
\DeclareMathOperator{\modpS}{\text{-}\mathrm{mod}_{\pS}}
\DeclareMathOperator{\End}{\mathrm{End}}
\DeclareMathOperator{\diag}{\mathrm{diag}}
\DeclareMathOperator{\Sym}{\mathrm{Sym}}
\DeclareMathOperator{\Adj}{Ad}
\DeclareMathOperator{\Aut}{Aut}
\DeclareMathOperator{\Tr}{Tr}
\newcommand{\hp}[1]{^{(#1)}}
 \title[The Spectral Action on Quivers]{Bratteli networks and \\ the Spectral Action on Quivers}
 \author[C. I. Perez-Sanchez]{Carlos I. P\'erez S\'anchez}
  \address{University of Heidelberg, Institute for Theoretical Physics,\newline \indent
  Philosophenweg 19, 69120 Heidelberg, Germany   \newline \indent
 \hspace{.0cm}\& \newline \indent Erwin Schrödinger International
 Institute for Mathematics and Physics, \newline \indent University of
 Vienna, Boltzmanngasse 9 1090 Wien, Austria }
  \email{perez@thphys.uni-heidelberg.de}
\newcommand*\notocchapter[1]{%
  \if@openright\cleardoublepage\else\clearpage\fi
  \thispagestyle{empty}\global\@topnum\z@
  \@afterindenttrue
  \let\@secnumber\@empty
  \@makeschapterhead{#1}\@afterheading
}
\begin{document}
\begin{abstract}
 In the context of noncommutative geometry, we consider quiver representations---not on vector spaces, as traditional, but on finite-dimensional prespectral triples (`discrete topological noncommutative spaces').  The similar idea developed first by Marcolli-van Suijlekom of representing quivers in spectral triples (`discrete noncommutative geometries') paved the way for some of the next results.  We introduce Bratteli networks, a structure that yields a neat combinatorial characterisation of the space $\mathrm{Rep}\, Q $ of prespectral-triple-representations of a quiver $Q$, as well as of the gauge group and of their quotient.  Not only these claims that make it possible to `integrate over $\mathrm{Rep}\, Q$' are, as we now argue, in line with the spirit of random noncommutative geometry---formulating path integrals over Dirac operators---but they also contain a physically relevant case. Namely, the equivalence between quiver representations and path algebra modules, established here for the new category, inspired the following construction: Only from representation theory data, we build a spectral triple for the quiver and evaluate the spectral action functional from a general formula over closed paths. When we apply this construction to lattice-quivers, we obtain not only Wilsonian Yang-Mills lattice gauge theory, but also the Weisz-Wohlert-cells in the context of Symanzik's improved gauge theory.  We show that a hermitian (`Higgs') matrix field emerges from the self-loops of the quiver and derive the Yang-Mills--Higgs theory on flat space as a smooth limit.
\end{abstract}
 \maketitle%
 \fontsize{11.35}{14}\selectfont%
\section{Motivation and Introduction}\label{sec:motivation}
\captionsetup{font=small} Quiver representations is a discipline of
relevance in algebraic geometry, invariant theory, representation of
algebraic groups \cite{bookQuivRep} and several other fields of
mathematics and physics.  From time to time, new applications of
quiver representations are discovered: they compute Donaldson-Thomas
invariants \cite{DT}, they yield \textsc{homfly-pt} polynomials in
knot theory \cite{Kucharski:2017ogk,Ekholm:2018eee}, which harmonises
with topological recursion \cite{QuiversFUW}, just to mention a
non-comprehensive list on contemporary developments.  Quiver
representation theory often builds unexpected bridges among topics one
initially thinks to lie far apart. Yet another example of this is
\cite{MvS}, which connects spin networks with noncommutative geometry
and lattice gauge theory.  In the present article we report, in a
self-contained way, progress on the relation between the latter two
topics, adopting a quiver representation viewpoint (we will not use spin networks at all). In this section we
motivate our investigations in informal style, prior to the technical
part that starts in Sec.~\ref{sec:geometry}. \par

A quiver is a directed multi-graph like
$\runterhalb{\includegraphics[width=1.7cm]{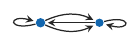}}\!\!$ (this one was
randomly picked).  At least at the heuristic level we pursue in this
introduction, it is not difficult to understand why this kind of
graphs can be used in physics (cf. Figs. \ref{fig:regions} and
\ref{fig:nonsymm} for two naive examples) even though a quiver by
itself cannot have physical information; it is rather the `shadow' of
the actual interaction between subsystems, points or regions (for
which more precise algebraic words exist), which can be expressed as a
commutative diagram in a suitable category. \par
An ordinary representation of a quiver
labels its vertices with vector spaces and assigns
linear maps to its arrows. This means that a representation is a
functor from the free category of the quiver (whose objects are the
vertices and the morphisms from a vertex $v$ to other $w$
are all directed paths from $v$ to $w$) to the category of vector
spaces.
However, depending on the problem, vector spaces
might not retain the whole information and this target category has to be replaced.
For the case at hand, gauge theory, we use a suitable
target category that has its roots in noncommutative geometry.\\

\begin{figure}
\begin{subfigure}{0.36\textwidth} \centering
\runter{\includegraphics[width=2.5cm]{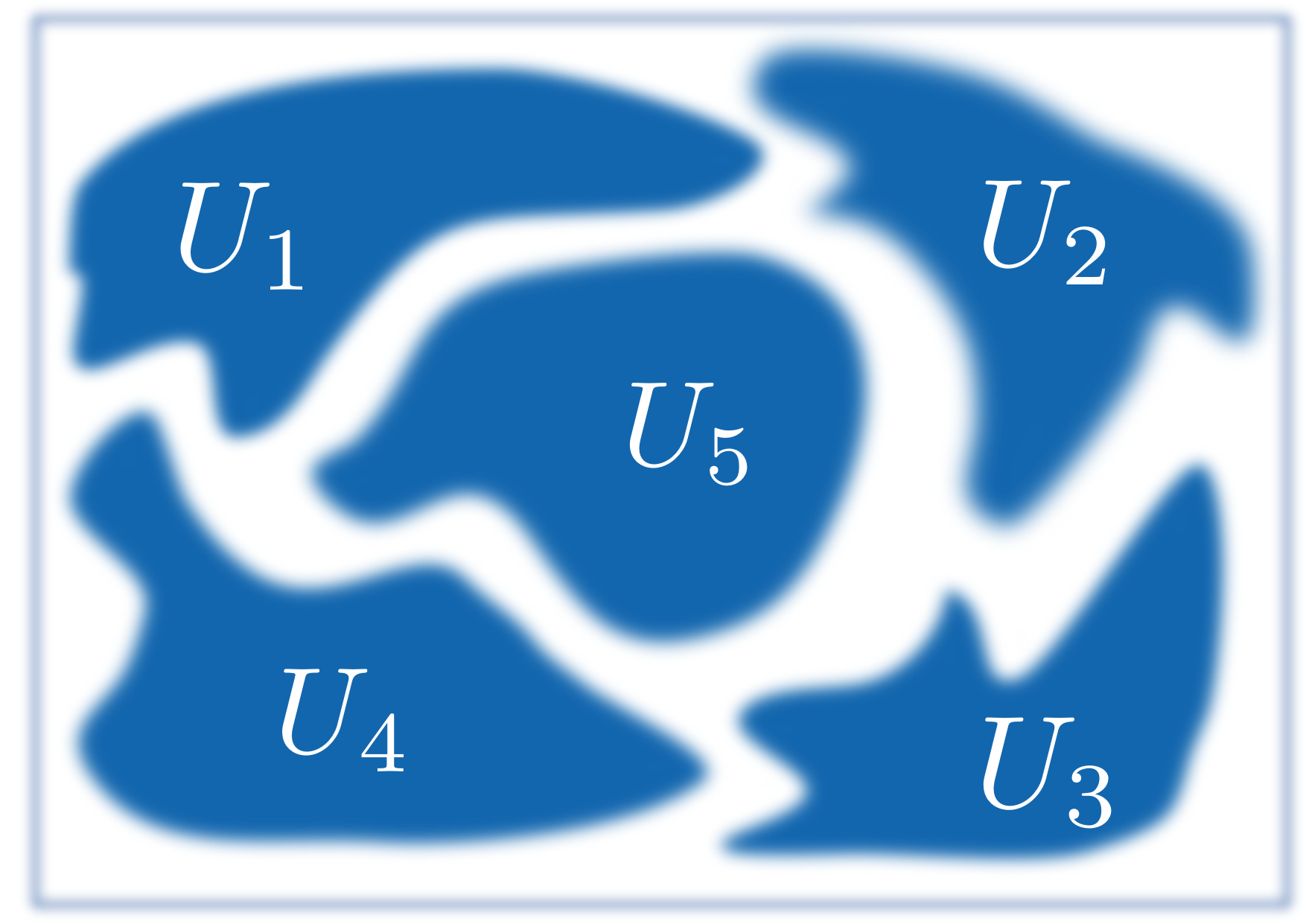}}\,\,
\,\,$\rightsquigarrow$ \,\,
\runter{\includegraphics[width=1.5cm]{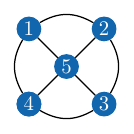}} \\[2.35ex]
 \caption{Some regions $U_1,\ldots, U_5$ are neighbouring in some way
   that is encoded in the dual graph of the right.  In the left panel
   each $U_i$ illustrates an open region of Euclidean space.\label{fig:regions}}
\end{subfigure}
\hfill
\begin{subfigure}{0.61\textwidth} \centering
\runter{\includegraphics[width=5cm]{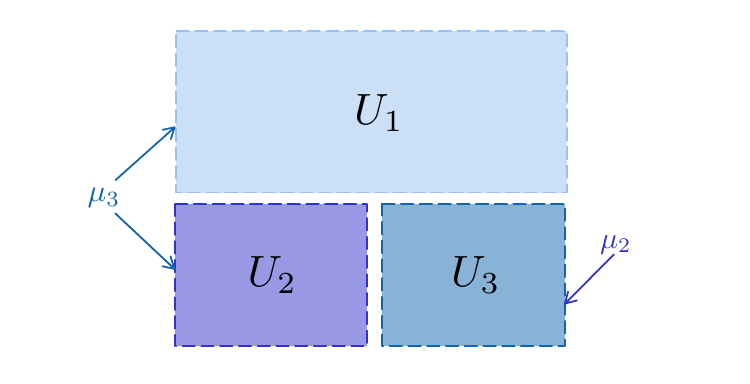}} \hspace{-4ex} $\rightsquigarrow$ \quad
\runter{\includegraphics[width=1.5cm]{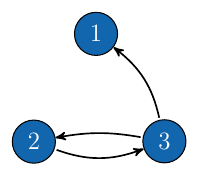}} \\[-1.85ex]
 \caption{Each $U_i$ region is filled of particles of certain type $i=1,2,3$. A membrane
  at
 $\partial U_j$ (labeled with a `chemical potential' $\mu_i$) lets pass $i$-type particles,
 which we encode by an arrow
$i\to j$ between nodes.
 For this naive asymmetric interaction,
the quiver is more suitable than the ordinary
   graph $\stackbin[1]{}{\azul{\bullet}}\!\!-\!\!\stackbin[3]{}{\azul{\bullet}}\!\!-\!\!\stackbin[2]{}{\azul{\bullet}}{\vphantom -}\,$.
 \label{fig:nonsymm}}
\end{subfigure} \caption{Naive examples with graphs and quivers}
\end{figure}

From amidst the largely diversified landscape of noncommutative
geometry \cite{ConnesBook} exclusively spectral triples are treated
here. These are the noncommutative generalisation of Riemannian
manifolds in the sense in which Gelfand-Naimark theory generalises
locally compact Hausdorff spaces to operator algebras. This is
reflected in the first item of a spectral triple $(A,H,D)$, a unital
involutive algebra $A$ that might be noncommutative (which often is a
C$^*$-algebra too and, in the above spirit, is the noncommutative
`space of functions'); secondly, a Hilbert space $H$ is required, with
an involution-preserving action $A \curvearrowright H$; and finally,
there is a self-adjoint operator $D$ on $H$, referred to as the Dirac
operator.  This nomenclature, and the axioms that spectral triples
satisfy (cf. Rem.~\ref{rem:infdims} for the omitted conditions) come
from Riemannian geometry.  The extent to which the action of the
algebra and the Dirac operator commute determines an abstract distance
on the spectral triple; the prescription is known as Connes' geodesic
distance formula \cite[Sec. 1]{ConnesBook}, since it reduces to the
geodesic distance for Riemannian manifolds, when a spectral triple
has a commutative algebra, provided that also more axioms are
verified \cite{Reconstr}.  \par

The dynamics of spectral triples was described first by
Chamseddine-Connes in what is referred to as the spectral action
\cite{Chamseddine:1996zu}.  This action reduces to Einstein's gravity
(plus mild terms) if the spectral triple is the canonical dual to a
(compact) Riemannian manifold, and to the Einstein--Yang-Mills--Higgs
theory if one allows spectral triples\footnote{Allowing the very same
noncommutative spectral triples, one can obtain the full Standard
Model of Particle Physics by adding a `fermionic Spectral Action',
which however, is outside this scope of this article. We refer to the
textbook \cite{WvSbook} and also to the original article
\cite{Chamseddine:2006ep}.} to have a noncommutative algebra of
a certain type (essentially the algebra of functions on the manifold
tensored with a matrix algebra).

\subsection{Preliminary work and two observations}\label{sec:Preliminarywork}

To our knowledge, the first to represent quivers in the above noncommutative
geometrical context were Marcolli and van Suijlekom. Because their work
had a strong influence in our constructions, it is convenient to
briefly mention already in this introduction what they achieved in
\cite{MvS}:

\begin{itemize}
 \item[({I})]
The category of spectral triples $\mathcal S$ was defined, as well as
$\mathcal{S}_0$,  its subcategory characterised by having a vanishing Dirac operator.
The space $\Rep _{\mathcal{S}_0}\!Q$ of
representations of a quiver $Q$ in
$\mathcal{S}_0$ was expressed in terms of unitary groups and homogeneous spaces;
the invertible natural transformations (that form the gauge group $\mathsf G$) between the
elements of $\Rep _{\mathcal{S}_0}\!Q$ was provided.

\vspace{1ex}
\item[(II)] From an orthogonal basis decomposition of the space $L^2
  (\Rep_{\mathcal{S}_0}\!Q)^{\mathsf G}=L^2(\Rep _{\mathcal
  S_0}(Q)/\mathsf G)$ the main object in \textit{op. cit.}, gauge
  networks, was constructed in terms of intertwiners at the vertices
  of the quiver. This structure generalises spin networks to a
  noncommutative geometrical setting. (Gauge networks will not
  reappear here, cf. Sec. 3 of \textit{op. cit.}  for the omitted
  results.)

\vspace{1ex}
\item[(III)] Starting with a background spin manifold to embed the
  quiver in, a spectral triple was defined from the geometry that the
  spin connection induces on the quiver.  After twisting its Dirac
  operator with quiver representation data, a lattice gauge theory on
  $Q=\Z^4$ was obtained from the spectral action.  This theory has the
  Yang-Mills--Higgs action on $\R^4$ as smooth limit.
\end{itemize}

\vspace{0ex}
\subsubsection{A remark about the Higgs field}\label{sec:Higgs_to_stała}
In Section
\ref{sec:theMvScategory} below, it will be shown that
the morphism structure given in (I) to
the category $\mathcal S$ of spectral triples, which has been used in (III), implies that the
matrices $D_v$ and $D_w$ that a quiver representation in $\mathcal S$  attaches to
two arbitrary vertices, $ v,w\in
\Z^4$,  have the same spectrum\footnote{The author is in debt
with Sebastian Steinhaus, without whose remark the proof given Sec.~
\ref{sec:theMvScategory} would not exist.}. Since the Higgs action functional in \cite[Prop. 29]{MvS} depends
only on the traces of those matrices at the vertices, such Higgs
is a constant field over $\R^4$, in the smooth limit.  \\

In order to get a non-constant Higgs field,
we change from (I)
the target category (dropping
said Dirac operators at the vertices; see also Rem.~\ref{rem:kernels} and Sec.~\ref{sec:kernels}
for more technical details)
and look for another mechanism
for the Higgs field to be generated.
Since the objects of the new  category are simpler (`simple' in a colloquial sense),
some of the results grouped in (I) can be obtained for the new category
without starting from scratch, but since they do not follow from (I) automatically,
they are spelled out in this article; this explains its length. However,
in view of  the  following
observation, there is an additional subtlety that,
to the knowledge of the author, has not been addressed before.

\subsubsection{A remark about the gauge group of the quiver}
Some facts about quiver representations in (say, complex) vector
spaces, $\mathsf{Vect}$, enjoy properties coming from the
$\hom_{\mathsf{Vect}}$-sets not being empty, regardless of the object
pair taken as input.  The space of functors $\Rep_{\mathsf{Vect}}Q:=\{ \mathcal P Q \to
\mathsf{Vect} \} $ from the free or path category $\mathcal P Q$ to $\mathsf{Vect}$,
as well as the invertible natural
transformations $\mathcal G_{\mathsf{Vect}}(Q) $ between these
functors read as follows (where $Q_0$ denotes the vertices of $Q$ and $Q_1$ its
oriented edges):
 \[
\Rep_{\mathsf{Vect}}Q = \coprod_{ m : {Q_0}\to \Z_{\geq 0} }
\prod_{\substack{e \in Q_1 \\
e : v  \to w }}
\C^{m_v\times m_w}
\quad \text{ and } \quad \mathcal G_{\mathsf{Vect}}(Q) =
\coprod_{ m : {Q_0}\to \Z_{\geq 0} } \prod_{v\in Q_0} \mathrm{GL}(m_v, \C).
\]
 In the leftmost expression, each $
\C^{m_v\times m_w}$  parametrises the morphisms
 from the vector space at $v$  to that at  $w$
 ($v,w\in Q_0$, and observe that there is one copy of that space
 for each edge  from $v$ to $w$);
the disjoint union over the dimension-labels $m$
lists all such assignments. The invertible natural
transformations $\mathcal G_{\mathsf{Vect}}(Q) $ can be written  as above
over all dimension-assignments $m : {Q_0}\to \Z_{\geq 0}$, since  $\mathsf{Vect}$ knows
that, for all $k,l\in \Z_{\geq 0}$, $\hom_{\mathsf{Vect}}(\C^{k},\C^{l})$ is never empty---but what if it were?
\par
For the target
categories $\mathcal{C}$ of our interest---think of $\K$
as the category of involutive algebras for the time being---the sets
$\hom_{\mathcal{C}}(Y,Z)$ can be empty for some pair of objects $(Y,Z)$,
demanding  special care.
While the functors $\mathcal P Q \to \K$ are, even if not economically\footnote{The proof of this claim is presented
 in Rem.~\ref{rem:GaugeGroups_differences} below,
 but the reason for the said non-economic expression is that
 not all maps $Y: Q_0\to \K$ lead to a $\K$-representation of $Q$.
It suffices, e.g. that a certain $Y^\diamond :Q_0 \to \K $
yields no morphisms $\{Y^\diamond _v \to Y^\diamond_w\} $ for
some edge $(v,w)$;  the reason for `well-described' is that for those
 labels the product vanish. But the  gauge group is subtle.}, at least well-described by
\begin{align}\label{redundantlabelsRepQ}
\coprod_{Y: Q_0\to \K} \,\prod_{(v,w)\in Q_1} \hom_\K ( Y_{v}, Y_w), \quad
\end{align}
the gauge group (of invertible natural transformations between those functors)  is smaller than
\begin{align}\label{redundantlabelsGauge}
 \coprod_{Y: Q_0\to \K} \,\,\prod_{v\in Q_0} \Aut_\K (Y_v).
 \end{align}
This is because \eqref{redundantlabelsGauge}  does not detect when
a certain map $Y^\diamond: Q_0\to \K$ in \eqref{redundantlabelsRepQ}
yields an empty product over edges, and the product over vertices in \eqref{redundantlabelsGauge} is not empty for such map $Y^\diamond$.\\[-1ex]

          Expressions like
          \eqref{redundantlabelsGauge} were supposed to describe the quiver's gauge group in an older version of this manuscript and in \cite[Prop. 13]{MvS} (for the respective categories of interest),
          but such description yields a group that is too large, because not all vertex-labels $Y$ lift to a functor $\mathcal P Q\to \K$. \textit{Bratteli networks} are simple combinatorial data introduced
here with a twofold aim:
\begin{itemize}

           \itemb They make it possible to rewrite
           \eqref{redundantlabelsRepQ} as a list of actual
           contributions $Y$, for which the hom-sets $\hom_\K ( Y_{v},
           Y_w)$ are non-empty for each edge $(v,w)$. See
           Example \eqref{ex:addedV2}.
           
\vspace{.5ex} \itemb While the previous point is only about economy,
Bratteli networks become vital to determine the gauge group.  For
categories like $\K$ with the property  described above, the gauge group is properly
contained in \eqref{redundantlabelsGauge} for a general quiver. See
Example \eqref{ex:GGdifferences}.

          \end{itemize}
          Besides the aforementioned examples that rapidly illustrate this,
          Remark \eqref{rem:GaugeGroups_differences} proves these claims.

\subsection{Strategy and Results}

With the physical motivation of the previous sections, we choose
another target category to represent quivers, i.e.~different from
vector spaces---the usual category involved in quiver
representations---but also slightly different from the category used
by Marcolli and van Suijlekom (cf. Rem.~\ref{rem:kernels}).  We call
`prespectral triples' such category, which we now fix
for the rest of the next summary of results:

\begin{itemize}[leftmargin=12pt]
  \setlength\itemsep{.5ex}

\itemb We use Bratteli networks to determine
the space of quiver representations, and
of this space modulo
natural equivalence. These structures yield
non-redundant expressions (in the sense of the two points at the end of the
last subsection)  like Theorem \ref{thm:RepQ_modG},
which are needed as the integration
domains for our path integrals in the  last point of this list.

  \itemb The name `prespectral' in the target category suggests that
  its objects are awaiting some sort of completion (by adding a Dirac
  operator), after which it becomes `spectral', which is true, but not
  how we shall proceed. Our construction is subtle in the sense that
  the important spectral triple will be one assembled for the whole quiver, not
  requiring to complete the prespectral triples at the
  vertices (recall Sec.~\ref{sec:Higgs_to_stała}).  Given a quiver representation, we construct a
  spectral triple, in particular providing a Dirac operator from
  representation theory data.  Such Dirac operator is a noncommutative
  version of the adjacency matrix of the quiver in the sense of its
  entries being operator-valued, which is in fact an
  abstraction of the parallel transport along those.  We compute the
  spectral action---essentially a trace of functions of the Dirac
  operator---in terms of closed paths on the quiver.  This is not by a
  coincidence: it reflects the equivalence between quiver
  representations and modules for the path algebra of the quiver for
  our new category (Prop.~\ref{thm:Eq_Cats}).

  \itemb The tools referred to in the previous point allow to derive
  Yang-Mills theory---pure or coupled to a \textit{bona fide}
  hermitian matrix field that can serve as a Higgs---in Sec.~
  \ref{sec:lattice_paths} from representations on lattices, grasped as
  quivers; we present this in arbitrary dimension.  The Higgs scalar
  emerges from the self-loops of the quiver. (A posteriori one could
  think of them as a graph-theoretical Kaluza-Klein picture, but this
  picture is also reminiscent of early Higgs models based on
  a two-point space collapsed
  to one\footnote{The equivalence relation is, classically, a point,
  but well-known to generate the non-diagonal elements of the matrix
  algebra resulting from the groupoid algebra associated to the
  equivalence relation. The two-point space is, historically, one of
  the fundamental steps to generate the Higgs, cf. \cite{ConnesBook},
  or its first French version, where this aspect is discussed in depth
  (even if that is not the most up-to-date version).}.)  The smooth limit
  is obtained in Theorem \ref{thm:YMH_explicitandsmooth}.  To further
  test our theory, we show how representations of certain lattices
  generate the (Lüscher--)Weisz-Wohlert action cells
  \cite[Eq. 2.1]{Weisz:1983bn}, which extend the Wilson action of
  gauge theory to Symanzik's improved gauge theory programme
  \cite{Symanzik:1981hc}. (See Rem.~\ref{rem:uwaga_Higgs} about
  physical improvements.)

\itemb We work towards a formalism of path integral over Dirac
operators. The results of Section \ref{sec:rep_theo} allow for an
integration over the space of representations $\Rep Q$ studied here.
When stated in terms of Dirac operators $D$ (parametrised by data of
$\Rep Q$), one obtains a partition function of the form $\int
\ee^{-S(D)} \dif D$ where $S(D)$ is the spectral action and $\dif D$
can be constructed \cite{MakeenkoMigdalNCG} as a product
Haar measure.  (That this partition
function models gauge-Higgs interactions can be deduced from
Sec.~\ref{sec:physics}.) Hence, despite the different origin of our
Dirac operator, this path integral formulation is in line with `random
noncommutative geometry' \cite{Barrett:2015foa, Hessam:2022gaw}, which
is motivated by quantum gravity (striving for Dirac operator integrals
instead of integrals over metrics).  For the Dirac ensembles from the
present article, Dyson-Schwinger or Makeenko-Migdal equations were
addressed in \cite{MakeenkoMigdalNCG} (and in easy cases, solved via positivity constraints).

\end{itemize}
\noindent
Besides the previous list, unlike \cite{MvS}, we do not assume a
manifold as input.  In our setting, this Dirac operator emerges purely
from representation theory data.  This could appear from the
mathematical perspective as irrelevant, but when gauge
theories---mostly addressed classically here---are eventually
quantised and coupled to gravity, the macroscopic object, or manifold,
is expected to emerge from the microscopic one, embodied in the quiver
[or infrared (or low energy) physics being emergent from ultraviolet
  (or high energy) physics].  It was therefore natural to ask whether
the lattice gauge theory of \textit{op. cit.} can be reconstructed
without reference to a manifold. We answer this question positively,
without the use of a spin structure (to get a fully realistic model,
at least a Clifford module and a chirality structure seem to be
required). Instead, the holonomy is the geometrical variable, which
elsewhere has been grasped as the fundamental variable in the geometry
of physical theories, e.g. Yang-Mills and General Relativity;
cf. e.g. Barrett's PhD thesis or \cite{BarrettPhD}.  A proposal for
quantisation is sketched at the end, as a perspective. We hint at a
list of symbols and conventions in the appendix, along with some
auxiliary facts about combinatorics of closed paths.

\setcounter{tocdepth}{2}

\begin{center}
\tableofcontents
\end{center}

\thispagestyle{empty}

\section{Prespectral triples}\label{sec:geometry}

\subsection{The category of prespectral triples}\label{sec:ferm_systems}

\begin{definition}

A \textit{prespectral triple} is a triple $(A,\lambda, H)$ consisting
of a finite dimensional involutive algebra or $*$-algebra $A$, which
is unital ($1\in A$) and of a finite-dimensional vector space $H$ with
inner product, or Hilbert space, that serves as an $A$-module; we
denote by $\lambda$ the corresponding $*$-action $A \curvearrowright
H$, which we impose to be faithful. Sometimes we leave $\lambda$
implicit, in case that omitting it does not lead to ambiguity; thus
the triple will look like a double $(A,H)$.  The set of prespectral
triples, denoted by $\ppS$, will be given a category
structure\footnote{At the moment $\ppS$ should be thought as a whole
symbol (without attention to the tilde).  This will disappear in
favour of a simpler notation, is reserved for the main category
later.}. Writing $X= (A, \lambda, H )$ and $X'= (A', \lambda',H')$ a
morphism $(\phi,L) \in \hom_\ppS(X,X')$, is a $*$-algebra map
$\phi:A\to A'$ together with a transition map, $L: H \to H'$. By
definition, this is a unitary matrix (or unitarity $L^*L=1_{H}$,
$LL^*=1_{H'}$) obeying
\begin{align*}
 \lambda' [  \phi(a) ] = L \lambda (a) L^*\text { for all $a \in A $}\,.
\end{align*}Given $X_1=(A_1,\lambda_1,H_1),X_2=(A_2,\lambda_2,H_2)
\in \ppS$ one can build their \textit{direct sum} $X_1\oplus X_2$,
whose algebra is given by $A_1\oplus A_2$. This algebra acts on
$H_1\oplus H_2$ by multiplication by the block matrix $( \lambda_1
\oplus \lambda_2 )(a_1,a_2) = \diag( \lambda_1 (a_1 ) \oplus \lambda
_2(a_2)) $, $a_i\in A_i$. This action is then faithful too.
\end{definition}

\begin{remark} \label{rem:kernels}
 A category $ \mathcal{S}_0$ that allows for a non-zero $\ker \lambda=
 \{a\in A : \lambda(a)=0\}$, while keeping a vanishing Dirac operator,
 appears originally in \cite[denoted by $\cC_0$ there]{MvS}, in terms of which the object-set of
 our category reads $\ppS=\{(A,\lambda,H) \in \mathcal{S}_0 : \ker
 \lambda =0\}$.  We also comment that it is not usual to call $\lambda
 $ `action' but `representation'. Our terminology tries to prevent
 confusion thereafter, when we will treat representations of quivers.
\end{remark}

\subsection{Characterisation of morphisms}
We characterise morphisms in two steps.

\subsubsection{Involutive algebra morphisms}

We examine unital $*$-algebra morphisms, the first layer of $\hom_\ppS$.
We recall a  well-known fact in the next example in order to introduce some notation.

 \begin{example}\label{ex:expandingMaps}
Notice that on $*$-algebra maps $\phi: M_{m} (\C) \to M_{n}(\C)$ is
heavily constrained: due to $\phi$ being unital ($1_m\mapsto 1_n$) and
linear ($0_m\mapsto 0_n)$ it cannot be a constant, and since $\phi$
algebra morphism, it cannot reduce dimension, so $m\leq n$.  If $n=m$
the only map $M_m(\C) \to M_m(\C)$ is the identity, up to conjugation
$\Adj u$ by a unitarity $u\in \uni(n)$, \begin{align*} \phi =\Adj u
  \circ \includegraphicsc{.48}{1-fold2zolty} \nonumber
 \end{align*}
When $m<n$, we cannot use projections
of $a$ to `fill' the image, as again this would imply that $\phi$ is not an
algebra map.  Then maps $\phi$ exist only when $n$ is a
multiple $k$ of $m$. For example, $\phi(a)= a \oplus \ldots \oplus a$
($k$ times $a$ in block-diagonal structure), which we identify with
$\phi(a)=1_k \otimes a$ and represent by
\[
\bigg\downarrow\,\,\,
\includegraphicsc{.50}{k-fold2zolty} \text{with a $k$-fold line.  In full
generality, }  \phi_u  =1_k \otimes \Adj u ( \bullet ) = \Adj u \circ
\includegraphicsc{.50}{k-fold2zolty}  \text{ with }u\in \uni(n) . \]
 \end{example}
This motivates a diagrammatic representation of $*$-algebras due to
Ola Bratteli \cite{BratteliDiags}. Before describing it, we comment
that his diagrams were an important tool in the classification of
{approximately finite-dimensional algebras}\footnote{Approximately
finite-algebras are C$^*$-algebras that are direct limits of
finite-dimensional ones.} based on Elliott's $K_0$-based construction.
For spectral triples with finite-dimensional algebras (corresponding
to manifolds of $0$-dimensions) the classification is known and due to
Krajewski \cite{Krajewski:classif} (whence `Krajewski diagrams') and
Paschke-Sitarz \cite{PaschkeSitarz}.  We mention parenthetically that
AF-algebras are used in \cite{Masson:2022ppv} to lift those Krajewski
diagrams. Our prespectral triples are from the onset
finite-dimensional not as an approximation to infinite dimensional
ones (also not to avoid technical clutter) but by the
finite-dimensionality of the (physical) gauge group.

We allow ourselves
a certain abuse of notation and label
objects with $s$ and $t$ for the rest of this section
 (although later on, in a quiver context, they are no longer labels but maps).
Let $l_s,l_t \in \Z_{>0}$ throughout.
\begin{definition}\label{def:bratteli}
Given $\mathbf m \in \Z^{l_s}_{>0}$ and $\mathbf n \in \Z^{l_t}_{>0}$,
a \textit{Bratteli diagram compatible with} $\bm$ and $\bn$, which we
denote by $\Brat:\mathbf m\to \mathbf n$, is a finite oriented graph
$\Brat=(\Brat_0,\Brat_1)$ that is vertex-bipartite
$\Brat_0=\Brat_0^s\dot\cup \Brat_0^t$ (edges start at $\Brat_0^s$ can
connect end only at vertices of $\Brat_0^t$) and such that
\begin{enumerate}
 \item the vertex-set satisfies
$\#\Brat_0^s =  l_s $ and $\#\Brat_0^t = l_t$. This allows
us to label the vertices  $i\in\Brat_0^s$ by $i\mapsto m_i$ and
$j\in\Brat_0^t$ by $j\mapsto n_j$, and
\item denoting by $C_{k,k'}$ the number of edges
between vertices $k\in \Brat_0^s, k'\in \Brat_0^t$, the second condition reads
\begin{align}
\label{linkingC}
n_j= \sum_{i\in \Brat_0^s }  C_{i,j}  m_i\,.
\end{align}
\end{enumerate}
\end{definition}

\begin{remark}\label{rem:Ccolumns_nonzero}
 Due to bipartiteneness, $C_{i,i'}=0$ for $i,i'\in \Brat_0^s$ and
 $C_{j,j'}=0$ and for $j, j'\in \Brat^t_0$, so the adjacency matrix
 of a Bratteli diagram has the form
\begin{align} \bigg( \begin{matrix} 0_{l_s} &  C\\[.3ex]
 C\trans & 0_{l_t}
 \end{matrix} \bigg)  \label{biadjacencyfirst}\end{align}
 where $C$ is referred to as biadjacency matrix. Due to these zeroes,
 actually the sum in condition \eqref{linkingC} can run over all
 vertices.  Observe that for fixed $j\in \Brat^t_0$, courtesy of
 \eqref{linkingC}, $\sum_i C_{i,j}>0$ holds, since otherwise $n_j $
 vanishes. Hence any vertex in $\Brat^t_0$ has non-zero valence.
\end{remark}

Conventionally, we place $\Brat_0^s=\{1,\ldots, l_s\} $ on the top row
and   $\Brat_0^t= \{1,\ldots, l_t\}$ on the bottom one, so that a
generic Bratteli diagram $\Brat: \mathbf m \to \mathbf n$ looks like
\begin{align*}\nonumber 
\Brat=
\raisebox{-.46\height}{\includegraphics[width=6.17cm]{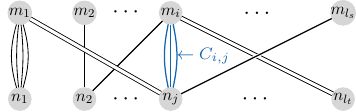}} \qquad \Bigg \downarrow
\end{align*}
\begin{lemma}\label{thm:Bratteli=AlgHoms}
If $*$-$\mathsf{alg}$ denotes the category of unital, involutive
algebras, for $\mathbf{m}\in \Z_{> 0 }^{l_s}$ and $\mathbf n\in \Z_{>
  0 }^{l_t}$,
\begin{align}\hom_{\text{$*$-$\mathsf{alg}$}} \big( \oplus_{i=1}^{l_s}  \M {m_i},\,  \oplus_{j=1}^{l_t} \M{n_i} \big) \simeq
\coprod_{\substack{\text{Bratteli diagrams} \\ \Brat\,:\, \mathbf m
    \to \mathbf n }} \mathcal U \big (\oplus_{i=1}^{l_t} \M {n_i} \big
)\,. \raisetag{1.2cm}\label{homStarAlg}\end{align}
\end{lemma}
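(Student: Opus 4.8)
The plan is to exploit that $\bigoplus_{j=1}^{l_t}\M{n_j}$ is a product in the category of unital $*$-algebras, so that a unital $*$-morphism $\phi$ factors through its components $\phi_j := \pi_j\circ\phi$, where $\pi_j$ is the projection onto the $j$-th summand. This yields a bijection $\hom_{*\text{-}\mathsf{alg}}\big(\bigoplus_i\M{m_i},\,\bigoplus_j\M{n_j}\big)\cong\prod_{j=1}^{l_t}\hom_{*\text{-}\mathsf{alg}}\big(\bigoplus_i\M{m_i},\,\M{n_j}\big)$, each $\phi_j$ being a unital $*$-representation of $\bigoplus_i\M{m_i}$ on $\C^{n_j}$. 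The problem thus reduces to classifying representations into a single matrix block, the natural multi-source generalisation of Example \ref{ex:expandingMaps}.

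First I would invoke the representation theory of the semisimple algebra $\bigoplus_i\M{m_i}$: up to unitary equivalence its only irreducible $*$-representations are the standard actions $\rho_i$ of the simple factor $\M{m_i}$ on $\C^{m_i}$ (extended by zero on the remaining factors), and every finite-dimensional $*$-representation is an orthogonal direct sum of these. Hence $\phi_j$ is determined up to unitary equivalence by the multiplicities $C_{i,j}\in\Z_{\geq 0}$ with which $\rho_i$ occurs, i.e.\ $\phi_j\cong\bigoplus_i(1_{C_{i,j}}\otimes\rho_i)$. Counting dimensions on $\C^{n_j}$ gives $n_j=\sum_i C_{i,j}m_i$, which is precisely condition \eqref{linkingC}. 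Collecting the columns over all $j$, the integer matrix $C=(C_{i,j})$ is the biadjacency matrix of a Bratteli diagram $\Brat:\mathbf m\to\mathbf n$, so the unitary-equivalence classes of $\phi$ are indexed by such diagrams, Remark \ref{rem:Ccolumns_nonzero} ensuring the defining conditions hold.

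With $\Brat$ fixed, I would then exhibit the accompanying $\mathcal{U}$-factor. For each $j$ choose the canonical representative $\phi^{\mathrm{std}}_{\Brat,j}=\bigoplus_i(1_{C_{i,j}}\otimes\rho_i)$; by complete reducibility and Schur's lemma any $\phi_j$ of the same type equals $\Adj u_j\circ\phi^{\mathrm{std}}_{\Brat,j}$ for some $u_j\in\uni(n_j)$, and conversely each such conjugate is again a morphism of type $\Brat$. Assembling $u=\bigoplus_j u_j\in\mathcal{U}\big(\bigoplus_j\M{n_j}\big)$ over the blocks produces the parametrisation $\coprod_{\Brat}\mathcal{U}\big(\bigoplus_j\M{n_j}\big)$, the disjoint union recording the choice of diagram.

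The one step demanding care is that this unitary labelling is redundant: two unitaries $u,u'$ give the same morphism exactly when $u^{-1}u'$ lies in the commutant of the image $\phi^{\mathrm{std}}_{\Brat}\big(\bigoplus_i\M{m_i}\big)$, namely in $\prod_j\prod_i\uni(C_{i,j})$ embedded block-diagonally in $\mathcal{U}\big(\bigoplus_j\M{n_j}\big)$. Hence the natural map $\coprod_{\Brat}\mathcal{U}\big(\bigoplus_j\M{n_j}\big)\to\hom_{*\text{-}\mathsf{alg}}$ surjects onto each equivalence class but is generally not injective, so the displayed $\simeq$ is to be read as this surjective (``well-described'', in the language of the introduction) labelling rather than a set bijection; the honest bijection would quotient each $\mathcal{U}\big(\bigoplus_j\M{n_j}\big)$ by the stabiliser $\prod_{i,j}\uni(C_{i,j})$. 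Tracking this stabiliser is precisely the redundancy that Bratteli networks are later introduced to eliminate, which is why I expect it to be the crux of the argument.
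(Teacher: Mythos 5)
Your proof is correct and reaches the same parametrisation, but it is organised differently from the paper's argument. The paper restricts $\phi$ to each summand $\M{m_i}$ of the source, reads off the block embeddings as in Example~\ref{ex:expandingMaps} to build $\Brat(\phi)$, and then must separately argue that pre-composition with $\Adj{u_s}$, $u_s\in\mathcal U(A_s)$, yields no new morphisms because $\phi_\Brat(u_s)$ is itself a unitary of $A_t$ that merely shifts the $\mathcal U(A_t)$-label. You instead project onto each target summand (using that $\oplus_j\M{n_j}$ is the categorical product in unital $*$-algebras) and classify each $\phi_j$ by the multiplicities of the irreducibles of the semisimple source algebra; Schur's lemma then hands you the unitary $u_j$ directly, so the absorption-of-$\mathcal U(A_s)$ step disappears. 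What your route buys is the standard representation-theoretic bookkeeping; what the paper's buys is that it stays in the pictorial block-embedding language of Example~\ref{ex:expandingMaps} that is reused throughout Section~2.

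Your final observation is a genuine and correct refinement rather than a gap: the map $\coprod_\Brat\,\mathcal U(\oplus_j\M{n_j})\to\hom_{*\text{-}\mathsf{alg}}$, $u\mapsto\Adj u\circ\phi_\Brat$, is surjective onto each class but has stabiliser the unitary group of the commutant of $\phi_\Brat(A_s)$, namely $\prod_j\prod_i\uni(C_{i,j})$, so \eqref{homStarAlg} is a parametrisation rather than a set bijection. The paper's proof does not address injectivity either, and only the central $\uni(1)$ part of this redundancy is acknowledged later (in the remark after Proposition~\ref{thm:reductionU} on passing to $\mathrm{PU}$); your identification of the full stabiliser is sharper, and is consistent with how Notation~\ref{notation_BrattsUnitarities} actually uses the lemma, namely as a way to reconstruct $\phi$ from a (possibly non-unique) pair $(\Brat,u)$.
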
\begin{proof} (Inspired by \cite{MvS}.)
  Given a $*$-algebra morphism $\phi: A_s\to A_t$ let us associate to
  $\phi$ a Bratteli diagram.  Following Ex.~\ref{ex:expandingMaps},
  the restriction of $\phi$ to the $i$-th summand of $A_s$, $\phi|_{\M
    {m_i}}: {\M {m_i}} \to A_t$ can be seen to be given block
  embeddings. Write a simple edge from the upper $i$-th vertex to the
  bottom $j$-th vertex for each block in the block-embedding of $\M
  {m_i}$ into $\M {n_j}$. Being $\phi$ unital, for fixed $j$, the
  number $C_{i,j}$ of edges incident to the $j$-th bottom node should
  satisfy $\sum_i C_{i,j} m_i=n_j$, so Condition \eqref{linkingC} is
  satisfied and we have compatibility. This constructs a map $(\phi:
  A_s \to A_t) \mapsto (\Brat(\phi):\bm \to \bn)$ that is invertible
  up to unitarities in $A_s$ and $A_t$ (the inverse map $\Brat \mapsto
  \phi_\Brat$ is constructed from a Bratteli diagram $\Brat$ by block
  embeddings as dictated by its edges).  Such unitarities $u_s\in
  \mathcal U(A_s)$ and $u_t\in \mathcal U(A_t)$ act both by
  conjugation, $\phi_\Brat \mapsto \Adj {u_t} \circ \phi_\Brat \circ
  \Adj {u_s} $.  But $\phi_\Brat$ is a $*$-algebra map so, first,
  $\phi_\Brat(u_s a u_s^*)= \phi_\Brat(u_s)
  \phi_\Brat(a)\phi_\Brat(u_s)^*$ and, secondly, it preserves
  unitariness, i.e. $\phi_\Brat(u_s)\in \mathcal U(A_t)$.  This means
  that $\phi_\Brat(u_s)$ only shifts the action of $ \mathcal U(A_t)
  \ni u_t$ by $\Adj ( u_t) \mapsto \Adj[ u_t \phi_\Brat(u_s)]$ and no
  new maps are gained from $ \mathcal U(A_s)$. The result
  \eqref{homStarAlg} follows by spelling $A_t$ inside $\mathcal
  U(A_t)$ out as a direct sum of matrix algebras.
\end{proof}
\begin{notation}\label{notation_BrattsUnitarities}
The previous lemma allows us to represent morphisms $\phi:A_s\to A_t$
of involutive algebras by a pair $(\Brat, u)$ consisting of a Bratteli
diagram $\Brat=\Brat(\phi)$ and a unitarity $u\in \mathcal U
(A_t)$. The original morphism is reconstructed by $\phi(a)= u
\phi_\Brat (a) u^* $, $a\in A_s$. With some abuse of notation we
abbreviate $\phi_\Brat$ as $\Brat$, so $\phi=\Adj u \circ \Brat$. An
example of what one means by the map $\Brat $ is
\begin{align*} \nonumber \Brat=
 \raisebox{-.61cm}{\includegraphics{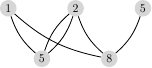}
 } \qquad \Brat (z,a,b ) = \bigg(\begin{matrix}
 \diag(z,a,a) & 0 \\ 0 &                  \diag(z,a,b )\,
                             \end{matrix}
 \bigg)\,,\end{align*} for $z\in \C, a\in \M 2, b\in \M 5$. Strictly,
we should also specify at bottom vertices a linear ordering, as to
distinguish, say $\diag(z,a,a)$ from $\diag(a,z,a)$, for the left
bottom vertex. One can take the convention that the arguments that
takes a bottom vertex appear in non decreasing matrix size (as above,
where $z$ is at the left of $a$ for being a smaller matrix). However,
any other convention would differ from this by a permutation matrix
$P$ (which can be seen as `shifting' $ \mathcal U (A_t )$ by $P$,
which is also a unitary).
\end{notation}

\subsubsection{Hilbert space compatible morphisms}
We now see the consequences of adding the inner product spaces.
\begin{lemma}[Consequence of Artin-Wedderburn Theorem]\label{thm:WedderburnArtinConsequence}
For $X \in \ppS$ there is an integer  $l >0$ and tuples of
non-negative integers%
  \begin{align}\label{integer_tuples}
 \,\,\,\mathbf n&=(n_1,\ldots,n_{l}), &\mathbf
 r&=(r_1,\ldots,r_{l})\,,\end{align}%
unique up to reordering, such that
\begin{align}
\qquad
 A&= \bigoplus_{w  =1,\ldots, l}  M_{n_w} (\C)  &  H & = \bigoplus_{w=1,\ldots, l }   r_w  \C^{n_w}  \,.\label{A_summands_H_summands}
\end{align}%
\end{lemma}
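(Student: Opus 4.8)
The plan is to read the statement as the Artin--Wedderburn structure theorem for $A$ together with the standard decomposition theory of its modules, the only genuinely nonstandard ingredient being that the abstract involution on $A$, combined with the \emph{faithful} $*$-action $\lambda$, forces $A$ to be semisimple. Since $\lambda\colon A \to \End(H)$ is by hypothesis an injective $*$-homomorphism into the matrix algebra $\End(H)$ (equipped with the adjoint as involution), I would first identify $A$ with the unital $*$-subalgebra $\lambda(A)\subseteq \End(H)$ and establish semisimplicity there.

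For semisimplicity, let $J$ be the Jacobson radical of the finite-dimensional algebra $A$; it is the largest nilpotent two-sided ideal, and as such it is invariant under any ring anti-automorphism, in particular $J^*=J$. I would then take an arbitrary $a\in J$: the element $a^*a$ again lies in $J$, hence is nilpotent, while under $\lambda$ it acts as a positive self-adjoint operator, since $\langle \lambda(a^*a)\xi,\xi\rangle = \|\lambda(a)\xi\|^2 \geq 0$ for all $\xi\in H$. A nilpotent positive operator is zero, so $\lambda(a^*a)=0$, whence $\lambda(a)=0$ and, by faithfulness, $a=0$. Thus $J=0$ and $A$ is semisimple, and Artin--Wedderburn over the algebraically closed field $\C$ gives the first identity $A\cong\bigoplus_{w=1}^{l}\M{n_w}$, with $l$ and the sizes $n_w$ unique up to reordering.

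Given this decomposition, the rest is routine representation theory: the semisimple algebra $\bigoplus_w \M{n_w}$ has, up to isomorphism, exactly the simple modules $\C^{n_w}$ (with the $w'$-th summand acting as $0$ for $w'\neq w$), and every module over it is completely reducible. Collecting isotypic components yields $H\cong\bigoplus_{w=1}^{l} r_w\,\C^{n_w}$ for multiplicities $r_w$; because $\lambda$ is a $*$-representation, distinct isotypic components are mutually orthogonal and the copies within each may be taken orthonormal, so this is an orthogonal Hilbert-space decomposition, and faithfulness of $\lambda$ forces each $r_w\geq 1$. Uniqueness up to a single simultaneous reordering of the pair $(\mathbf{n},\mathbf{r})$ follows because $l$, the dimensions $n_w$, and the multiplicities $r_w=\dim_\C \hom_A(\C^{n_w},H)$ are all intrinsic invariants of $(A,H)$.

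The only step that is not purely formal is the semisimplicity argument, and I expect the (minor) obstacle there to be making fully precise that it is the \emph{Hilbert-space} structure, and not merely the abstract involution, that excludes a nonzero radical: namely that the positivity of $\lambda(a^*a)$ together with its nilpotency forces $a^*a=0$. Everything downstream of that is the standard structure and decomposition theory of finite-dimensional semisimple algebras.
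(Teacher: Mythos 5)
Your proof is correct and follows essentially the same route as the paper, which simply invokes Artin--Wedderburn for $A$ and faithfulness of $\lambda$ for the multiplicities $r_w$. The one thing you add is the explicit semisimplicity argument (the Jacobson radical is $*$-invariant and $\lambda(a^*a)$ nilpotent and positive forces $a=0$), a step the paper's one-line proof leaves implicit but which is indeed where the Hilbert-space structure, not just the abstract involution, is genuinely used.
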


\begin{proof}
Artin-Wedderburn theorem guarantees for the algebras of the
prespectral triple $X $ the decomposition into matrix algebras, say
with $l$ direct summands, as in the equation of the left
\eqref{A_summands_H_summands}. Since the action $\lambda$ of these on
inner product space $H $ has to be faithful, the tuple $\mathbf r$ in
\eqref{integer_tuples} consist of non-negative integers.
\end{proof}

\begin{example}\label{ex:Aut(H)} Consider $\M n$
 with an $r$-fold action \begin{align*}\lambda(a)=1_r \otimes a
   =\diag(a,\ldots, a) \qquad\text{($a$ appearing $r$-times in the
     diagonal)}. \notag \end{align*} Let $X=(\M n, \lambda, r \C^n)$
 where $r \C^n:=(\C^n\oplus \cdots \oplus \C^n)$ has $r$-direct
 summands.  This notation allows one to drop $\lambda$ and simplify as
 $X=(\M n, r \C^n)$.  We determine now $\End_\ppS(X)=\hom_\ppS (X,X)
 $. By Ex.~\ref{ex:expandingMaps}, $\phi(a)=\Adj u (a)= u a u^*$ for
 some $u\in \mathcal U[ \M n ] =\uni (n) $. The compatibility
 condition for $\phi$ and $L \in \mathcal U ( r \C^n) =\uni(r\cdot n)$
 reads $ \lambda [\phi(a)]= L \lambda (a ) L^* $ $\text{ or,
   equivalently, }$ $( 1_r \otimes u a u^* ) = L ( 1_r \otimes a ) L^*
 $.  This determines $L=u'\otimes u$ for a $u'\in \uni(r)$,
 yielding \begin{align*} \End_\ppS( \M n , r \C^n) = \{ (\phi,L) :
   \phi=\Adj u , L= u' \otimes u \} \simeq \uni(r) \times \uni(n)
   \,.\end{align*}
 \end{example}


 \begin{proposition}[Characterisation of $\hom_\ppS$]\label{thm:characHomFerm}
For prespectral triples $X_s=X_{s}(\mathbf m,\mathbf q ) $ and
$X_t=X_{t}(\mathbf n,\mathbf r)$ parametrised as in Lemma
\ref{thm:WedderburnArtinConsequence} by $\bm,\bq \in \Zpos^{l_s}$ and
$\bn ,\br \in \Zpos^{l_t}$ one has
\begin{align}\label{homXsXt}
\hom_\ppS (X_{s},X_{t}) & \simeq \coprod_{ \Brat:  (\mathbf m,\mathbf q)\to (\mathbf n,\mathbf r)}
 \Big [ \textstyle\prod_{j=1}^{l_t}  \uni(r_j ) \times \uni(n_j )
 \Big] \,,
\end{align}
and the set of compatible Bratteli diagrams (further
constrained after one adds the layer of inner spaces) that appear below $\amalg$ is explicitly
\begin{align*} \notag
\{\Brat: (\mathbf m,\mathbf q)\to (\mathbf n,\mathbf r)\} =
 \Bigg\{ C \in M_{l_s\times l_t} (\Z_{\geq 0 })  : \bigg( \begin{matrix}
\mathbf n \\[.3ex] \mathbf q
 \end{matrix} \bigg) =  \bigg( \begin{matrix}
  C\trans & 0 \\[.3ex]
 0 & C
 \end{matrix} \bigg)
  \bigg( \begin{matrix}
    \mathbf m       \\[.3ex]
    \mathbf r
 \end{matrix} \bigg)
 \Bigg\}
\end{align*}
In particular, the biadjacency matrix $C$ has neither zero-columns nor
zero-rows.

\end{proposition}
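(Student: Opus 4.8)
The plan is to peel off the morphism $(\phi,L)$ in two layers, treating the $*$-algebra map $\phi$ and the transition unitary $L$ in turn, and then to show that the constraints they place on one another are exactly the two linear relations $\mathbf n=C\trans\mathbf m$ and $\mathbf q=C\mathbf r$ cut out by the displayed block equation. Throughout I would first put both triples in the Artin--Wedderburn normal form of Lemma \ref{thm:WedderburnArtinConsequence}, so that $A_s=\oplus_i\M{m_i}$ acts on $H_s=\oplus_i q_i\C^{m_i}$ and $A_t=\oplus_j\M{n_j}$ on $H_t=\oplus_j r_j\C^{n_j}$, each block acting by amplification $a_i\mapsto 1_{q_i}\otimes a_i$, resp. $b_j\mapsto 1_{r_j}\otimes b_j$.

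For the first layer I would invoke Lemma \ref{thm:Bratteli=AlgHoms} directly: every $\phi\in\hom_{\text{$*$-$\mathsf{alg}$}}(A_s,A_t)$ is a pair $(\Brat,u)$ with $\Brat$ a Bratteli diagram compatible with $\mathbf m,\mathbf n$ and $u\in\mathcal U(A_t)=\prod_j\uni(n_j)$, the morphism being $\phi=\Adj u\circ\Brat$. Compatibility is precisely condition \eqref{linkingC}, $n_j=\sum_i C_{ij}m_i$, i.e. $\mathbf n=C\trans\mathbf m$; this already produces the outer $\coprod_\Brat$ together with the $\prod_j\uni(n_j)$ factors, and by Remark \ref{rem:Ccolumns_nonzero} it forces $C$ to have no zero column.

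The heart of the argument is the second layer. Rewriting the compatibility $\lambda_t[\phi(a)]=L\lambda_s(a)L^*$ as $\lambda_t[\phi(a)]\,L=L\,\lambda_s(a)$ exhibits $L$ as a \emph{unitary intertwiner} between the $A_s$-module $H_s$ and the $A_s$-module $H_t$ obtained by restricting $\lambda_t$ along $\phi$. Decomposing this restricted module through the diagram $\Brat$, the $i$-th simple $\C^{m_i}$ occurs in $H_t$ with multiplicity $\sum_j C_{ij}r_j$ and multiplicity space $\oplus_j(\C^{r_j}\otimes\C^{C_{ij}})$. Since $A_s$ is semisimple, Schur's lemma says a unitary intertwiner exists \emph{iff} these multiplicities match those of $H_s$, namely $q_i=\sum_j C_{ij}r_j$ for every $i$ --- this is exactly the second block $\mathbf q=C\mathbf r$, and (reading Remark \ref{rem:Ccolumns_nonzero} transposewise) it forces $C$ to have no zero row, since otherwise some $q_i$ would vanish. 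When the relation holds the intertwiners form a torsor over the product of unitary groups of the multiplicity spaces; absorbing the already-accounted algebra unitary $u$, exactly as the factorisation $L=u'\otimes u$ arises in Example \ref{ex:Aut(H)}, leaves the residual freedom organised by the target vertices as $\prod_j\uni(r_j)$.

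Assembling the two layers yields the asserted $\hom_\ppS(X_s,X_t)\simeq\coprod_\Brat\prod_j[\uni(r_j)\times\uni(n_j)]$, with $C$ having neither zero rows nor zero columns. The step I expect to be delicate is precisely this last one: the algebra unitary $u\in\prod_j\uni(n_j)$ and the intertwiner $L$ are not independent --- conjugation by the centre of each matrix block is invisible to $\phi$ but visible in $L$ --- so one must quotient the naive product of intertwiner groups by this redundancy to avoid over-counting (the same $U(1)$-ambiguity already implicit in Example \ref{ex:Aut(H)}). Getting this bookkeeping right, so that the surviving freedom is cleanly the per-target-vertex $\uni(r_j)$ rather than the source-indexed groups $\uni(q_i)$ that a first count produces, is the main obstacle; by contrast the two linear relations and the support conditions on $C$ fall out almost automatically once the Artin--Wedderburn normal forms are in place.
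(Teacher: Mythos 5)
Your two-layer strategy coincides with the paper's, and your route to the second linear constraint is genuinely different and arguably cleaner: the paper derives $\bq=C\br$ by tracing the commutative diagram \eqref{comm_diag_ith} for $\iota_i(a)$ with $\Tr a\neq 0$ and following the edges of $\Brat$ through \eqref{BratteliwHilberts}, whereas you obtain it in one stroke from Schur's lemma as the multiplicity-matching condition for the existence of a unitary intertwiner between $\lambda_s$ and $\lambda_t\circ\phi$ viewed as $A_s$-modules. This also yields the no-zero-row condition for free, and the first layer (Lemma \ref{thm:Bratteli=AlgHoms}, $\bn=C\trans\bm$, no zero columns) is identical to the paper's.

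The gap is exactly the step you flag and then do not carry out, and it is worth being precise about why it cannot be finessed. By your own Schur analysis, for fixed $\phi=\Adj u\circ\Brat$ the unitary intertwiners form a torsor under the unitary group of the commutant of $\lambda_t(\phi(A_s))$, which is $\prod_i\uni(q_i)$ with $q_i=\sum_j C_{ij}r_j$; the factor $\prod_j\uni(r_j)$ in \eqref{homXsXt} is in general a \emph{proper} subgroup of this (compare $\sum_j r_j^2$ with $\sum_i(\sum_j C_{ij}r_j)^2$), so the claimed reduction cannot be achieved by regrouping unitary factors, and ``absorbing $u$'' does not help either, since $\lambda_t(\mathcal U(A_t))$ does not lie in that commutant and changing $u$ changes $\phi$. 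What closes the argument in the paper is a different observation: writing $L=\lambda_t(u)\,\ee^{\ii\theta}\,u_t\,P_\pi\,u_s$ with $u_t\in\prod_j\uni(r_j)$ and $u_s\in\prod_i\uni(q_i)$, the $u_s$-part is invisible to the only relation $L$ must satisfy, $\lambda_t[\phi(a)]=L\lambda_s(a)L^*$, because $u_s$ commutes with $\lambda_s(A_s)$ and $L$ enters only through $\Adj L$; the isomorphism \eqref{homXsXt} is therefore to be read as a parametrisation of morphisms by the surviving data $(\Brat,u,u_t)$, not as a canonical bijection of the raw set of pairs $(\phi,L)$ with a product of groups (the remaining redundancy is then quotiented out in Definition \ref{def:reductionPU} and Proposition \ref{thm:reductionU}). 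Without making this ``the complementary freedom acts trivially'' argument explicit, your proof establishes the two linear constraints and the support conditions on $C$, but not the group factors on the right-hand side of \eqref{homXsXt}.
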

See Lemma \ref{thm:WedderburnArtinConsequence} for the dependence on
$\bm,\bn,\bq,\br$ and notice that in the condition \eqref{homXsXt} the
tuples characterising objects are `crossed' (i.e. $\bq$ is a tuple
concerning $H_s$ and $\bn$ a tuple for $A_t$, and these appear in the
\textsc{lhs}).

\begin{proof}
By Lemma \ref{thm:Bratteli=AlgHoms}, a morphism $(\phi,L):
X_{s}(\mathbf m,\mathbf q ) \to X_{t}(\mathbf n,\mathbf r)$
determines a Bratteli diagram $\Brat$.

Since $\phi$ is a
$*$-algebra morphism it embeds blocks of $m_1\times m_1$-matrices,
\ldots, $m_{l_s}\times m_{l_s}$-matrices into $n_{1}\times
n_{1}$-matrices, $\ldots$, $n_{l_t}\times n_{l_t}$
(cf. Ex.~\ref{ex:expandingMaps}) by following the lines of the Bratteli
diagram. Matching dimensions yields
\begin{align} \label{costamniewaznego}
n_j= \sum_{i} C_{i,j } m_i\qquad \text{ that is } \qquad
  \mathbf n=   C\trans \cdot \mathbf m\, \text{ with $ \sum_v C_{v,w}>0$}.
\end{align}
\noindent
 \begin{minipage}{.48005\textwidth}
Also $\phi=\Adj u\circ \Brat $ for some $u\in \mathcal U(A_t)$,
according to Lemma \ref{thm:Bratteli=AlgHoms}.  To obtain the
remaining condition satisfied by $C$, we add the restriction coming
from the layer of the Hilbert spaces.  Each node labelled by $n$ in
the diagram \eqref{BratteliwHilberts} represents the algebra $\M n$
that acts on the vector space it has above or below it.  From the
compatibility condition $\lambda_t [ \phi(\mathbf a) ] = L \lambda_s
(\mathbf a) L^*$ for any $\mathbf a\in A_s=\oplus_v \M{m_v}$, which
clearly requires the dimension of $H_s$ and $H_t$ to coincide, one
gets $\sum_ iq _i m_i = \sum_j r_j n_j = \sum_{i,j} r_j C_{i,j}
m_i$---but more is true.
\end{minipage}\quad
\begin{minipage}{.4971\textwidth} \centering
 \begin{align} \label{BratteliwHilberts}
\Brat= \!\!\!\raisebox{-.46\height}{\includegraphics[width=5.606cm]{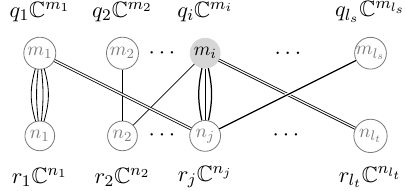}}
\end{align}

\vspace{2ex}
\begin{align} \label{comm_diag_ith}
 {\begin{tikzcd}[ampersand replacement=\&, column sep= 1.98cm]
 H_s   \arrow[]{r}{\lambda_s(\iota_i(a))}  \&  H_s   \arrow[]{d}{L}\\
 H_t   \arrow[]{u}{L^*}   \arrow[swap]{r}{  \text{\raisebox{-2ex}{$\lambda_t [\phi(\iota_i(a)) ] $}}  }  \& H_t
\end{tikzcd}}\end{align}
\end{minipage} \\

\noindent The diagram \eqref{comm_diag_ith} is obtained from the
compatibility condition for $\iota_i(a)=(0,\ldots, 0, a,0,\ldots, 0)$,
with $a\in \M{m_i}$ appearing in the $i$-th entry.  Then the traces of
the two possible maps $H_t\to H_t$ in diagram \eqref{comm_diag_ith}
coincide, and so do the traces of the horizontal maps, thanks to the
unitarity of $L$. Further, since $u$ is unitary, and $\lambda_t$ is a
$*$-action, $\Adj [\lambda_t(u) ]$ does not affect the trace
either. Picking $a\in \M{m_i} $ with non-vanishing trace and following
the edges in Eq.~\eqref{BratteliwHilberts} one obtains the second
condition $q_i=\sum_{j=1,\ldots,l_t} C_{i,j} r_j$, or $\mathbf q= C
\cdot \mathbf r$. \par It remains to see how much freedom does $L$
still contain.  Spelling out the compatibility condition and using
$\Brat=\Adj u^* \circ \phi$ for $u\in \mathcal U (A_t)$, one obtains
\begin{align}
\label{explicit_compa_inproof}\lambda_t [ \Brat (\mathbf a) ]  = \lambda_t (u )^*  L  \lambda_s( \mathbf a)  L^*   \lambda_t (u )
\text { for each } \mathbf a \in \oplus_v \M{m_v} \,.\end{align} [If
  the way to place the star seems odd, recall the relation right after
  \eqref{costamniewaznego}].  Since each matrix $a_i$ in
$\ba=(a_1,\ldots,a_{l_s})\in A_s$ should appear in blocks the same
number of times in the \textsc{rhs} than in the \textsc{lhs} of
\eqref{explicit_compa_inproof}, $ \lambda_t (u )^* L$ is a matrix
permutation $P_\pi$ for some $\pi\in \Sym( \dim H_t)$, up to an
abelian phase $\ee^{i\theta}$.  (One can determine $\pi$ in terms of
the integer parameters and of $\Brat$ as later in
Ex.~\ref{ex:permmatrix}, but the essence of the argument is that it
does not depend on anything else.)  This means that $L=\lambda_t(u)
\ee^{\ii \theta} P_\pi$, where, however, $P_\pi$ is far from unique.
For once a certain $L$ satisfies \eqref{explicit_compa_inproof}, so
does $L$ acted on by the unitarities via
\begin{align} \label{actionOnL}
L \mapsto   u_t ^* \cdot L\cdot u_s \qquad u_s \in \textstyle
\prod_{i=1}^{l_s} \uni(q_i)  ,
\,u_t \in\textstyle \prod_{j=1}^{l_t} \uni(r_i)\,,
\end{align}
where the star on $u_t$ is purely conventional.  The action of
$u_s=(u_{s,i})_{i=1\ldots,l_s}$ (with $u_{s,i}$ a unitarity matrix of
size $q_i$) is on the blocks of the form $ 1_{q_i} \otimes a_i $. But
since $L$ appears acting on $\lambda_s(\ba)$ by the adjoint action
letting the unitarities $u_s=(u_{s,i})_{i=1\ldots,l_s}$ act on $L$ as
in \eqref{actionOnL} for each $i$, yields a trivial action, namely $
u_i u_i^* \otimes a_i = 1_{q_i} \otimes a_i $. Hence the only
information $L$ retains comes from $u$ and $u_t$ (any of which can
absorb the abelian phase). \end{proof}%

\noindent
A similar statement to the previous one, with the possibility of
$\lambda$ being a non-faithful action, is \cite[Prop. 9]{MvS}. In that
sense, \textit{op. cit.} is more general and inspired our proposition.
However, thanks to the explicit action \eqref{actionOnL} in our proof,
we observe already a possible further reduction of the group
$\prod_j [ \uni
  (r_j)\times \uni (n_j) ] $ reported there;
  see Proposition \ref{thm:reductionU}.

\begin{definition}\label{def:reductionPU}
With the notation of Proposition \ref{thm:characHomFerm} let
\begin{align*}\hom_\pS  (X_{s},X_{t}) :=  \hom_{\ppS} (X_s,X_t) / \sim\,.  \end{align*}
For given $X_s=(A_s,\lambda_s, H_s), X_t=(A_t, \lambda_t, H_t)$, we
define two morphisms $(\phi_i, L_i):X_s\to X_t$ to be equivalent,
$(\phi_1, L_1) \sim (\phi_2, L_2) $, if the algebra-maps agree, $
\phi_1=\phi_2:=\phi$, and if so, further, if also $L_1$ and $L_2$
satisfy
\begin{align}\label{equivRel}
 L_1 \lambda_s (a) L_1^* =
  \lambda_t  [\phi( a) ] = L_2 \lambda_s (a) L_2^*,\quad
 \text { for all $a \in A_s $}.
\end{align}
With some abuse of notation we still write $(\phi,L)$ instead of the
correct but heavier $(\phi, [L])$ to denote the morphisms of
$\hom_\pS$.
\end{definition}

\begin{proposition}\label{thm:reductionU}
With the notation of Proposition
\ref{thm:characHomFerm}, one has the following characterisation:
\begin{align}\label{nonotriv homXsXt}
\hom_\pS (X_{s},X_{t}) & \simeq \coprod_{ \Brat:  (\mathbf m,\mathbf q)\to (\mathbf n,\mathbf r)}
  \textstyle\uni (\bn) \,, \text{ where } \uni (\bn):=\prod_{j=1}^{l_t} \uni(n_j )\,.
\end{align}
\end{proposition}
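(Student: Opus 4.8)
The plan is to read the result off Proposition \ref{thm:characHomFerm} by pinning down exactly what the equivalence relation of Definition \ref{def:reductionPU} forgets. First I would note that the supplementary clause in that definition---namely $L_1\lambda_s(a)L_1^* = \lambda_t[\phi(a)] = L_2\lambda_s(a)L_2^*$ for all $a\in A_s$---is automatically satisfied by \emph{any} two morphisms $(\phi,L_1),(\phi,L_2)\in\hom_\ppS(X_s,X_t)$ that share the same algebra component $\phi$, since the defining compatibility of a $\ppS$-morphism already forces $L_i\lambda_s(a)L_i^* = \lambda_t[\phi(a)]$ for $i=1,2$. Consequently the relation $\sim$ collapses to the single condition $\phi_1=\phi_2$, so the quotient $\hom_\pS(X_s,X_t)$ is in natural bijection with the image of the forgetful assignment $(\phi,L)\mapsto\phi$; that is, with the set of $*$-algebra maps $\phi\colon A_s\to A_t$ that extend to at least one $\ppS$-morphism.

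Second, I would match this image against the explicit parametrisation \eqref{homXsXt}. By Notation \ref{notation_BrattsUnitarities}, each such $\phi$ is encoded by a pair $(\Brat,u)$ with $u\in\mathcal U(A_t)=\prod_{j=1}^{l_t}\uni(n_j)=\uni(\bn)$ and $\phi=\Adj u\circ\Brat$, so $\phi$ depends only on the diagram $\Brat$ and on the $\uni(\bn)$-coordinate. The remaining factor $\prod_{j=1}^{l_t}\uni(r_j)$ occurring in \eqref{homXsXt} records precisely the residual freedom $u_t$ in the transition map $L$, acting by $L\mapsto u_t^*Lu_s$ as in \eqref{actionOnL} (with the $u_s$-action already shown to be trivial in the proof of Proposition \ref{thm:characHomFerm}); crucially, this $u_t$ leaves $\phi$ untouched. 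Hence two points of $\coprod_\Brat\prod_j[\uni(r_j)\times\uni(n_j)]$ map to the same $\phi$ if and only if they share both $\Brat$ and their $\uni(\bn)$-coordinate.

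Combining the two steps, passing to the quotient collapses exactly the $\prod_{j=1}^{l_t}\uni(r_j)$-directions of \eqref{homXsXt} while keeping the indexing by Bratteli diagrams $\Brat\colon(\bm,\bq)\to(\bn,\br)$ (still subject to both constraints $\bn=C\trans\bm$ and $\bq=C\br$) together with the surviving factor $\uni(\bn)=\prod_{j=1}^{l_t}\uni(n_j)$. This is precisely \eqref{nonotriv homXsXt}.

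The step I expect to require the most care is verifying that $\sim$ acts \emph{only} along the $\uni(r_j)$-directions and does not secretly identify distinct $\uni(\bn)$-coordinates. This amounts to confirming that, for a fixed $\Brat$, distinct values of $u$ genuinely produce distinct algebra maps $\phi=\Adj u\circ\Brat$ modulo the redundancies already quotiented out in Lemma \ref{thm:Bratteli=AlgHoms}; equivalently, that the $\uni(\bn)$-factor of \eqref{homXsXt} is an honest set of representatives for the $\phi$'s rather than an over-count. Once this is granted---it is the content of the closing observation in the proof of Proposition \ref{thm:characHomFerm}, that the only information $L$ retains beyond $u$ is the $u_t$-freedom---the claimed bijection follows at once.
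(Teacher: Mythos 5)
Your proof is correct and follows essentially the same route as the paper: both arguments rest entirely on the parametrisation of Proposition \ref{thm:characHomFerm} and on the observation that the $\prod_{j}\uni(r_j)$-freedom in $L$ is invisible to the relation $\sim$ of Definition \ref{def:reductionPU}. Your remark that the clause \eqref{equivRel} is automatic for any two $\ppS$-morphisms sharing the same $\phi$ is a mild streamlining of the paper's explicit block-matrix verification that $L'=u_t^*L$ still satisfies the compatibility condition, but the substance of the argument is identical.
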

\begin{proof} It has been shown that  $\ppS$-morphisms
are indexed by compatible Bratteli diagrams. These are unaltered when
we reduce by $\sim$ in Eq.~\eqref{equivRel}. It remains to see how
such relation reduces $\textstyle\prod_{j=1}^{l_t} \uni(n_j ) \times
\uni(r_j )$ to the unitary groups as claimed. \par To prove the
triviality of the action of $\uni(\br)= \prod_{j=1}^{l_t} \uni(r_j )$
assume that $(\phi,L)$ is a $\ppS$-morphism $X_s\to X_t$, so
$\lambda_t[\phi(\ba)]=L \lambda_s(\mathbf a) L^*$ holds for all $\ba
\in A_s$. By definition of $\lambda_t$, $\lambda_t[\phi(\ba)]$
consists of matrix blocks of the form $1_{r_i} \otimes b_i $ where
each $b_j \in \M{n_j}$ is function of $\ba \in A_s$.  Now we let $
u_t=(u_{t,j})_{j=1\ldots,l_t} \in \uni(\br)$ act as in
\eqref{actionOnL} on $L$, and call $L' = u_t^* \cdot L $ the
result. The compatibility condition transforms as
\[
L' \vast[
 \begin{smallmatrix}
    1_{r_1} \otimes b_1 & 0   & \cdots  & 0 \\[.5ex]
     0  &1_{r_2} \otimes b_2  & 0       & 0 \\
     \vdots   & \ddots  & \ddots       & \vdots \\[1ex]
     0 &  \cdots    & 0 &\,\, 1_{r_{l_t}} \otimes b_{l_t}
   \end{smallmatrix}\vast]
 (L')^* =
 L \vast[
 \begin{smallmatrix}
    u_{t,1}^*u_{t,1}^{\phantom *} \otimes b_1 & 0   & \cdots   & 0 \\[.5ex]
     0  & u_{t,2}^*u_{t,2}^{\phantom *} \otimes b_2  &  \cdots & 0 \\
     \vdots   & \ddots  & \ddots      & \vdots \\[1ex]
     0 & \cdots    & 0 & \quad u_{t,l_t}^*u_{t,l_t} ^{\phantom *}\otimes b_{l_t}
   \end{smallmatrix}\vast]
 L^*
\]
which means that Eq.~\eqref{equivRel} is verified for $L'$. It is now
obvious that the action by $L$ is fully determined by $\Brat$ and by
the unitary group $\prod_{i=1\ldots,l_t} \uni(n_i)$.
\end{proof}
Therefore we can instead of parametrising each $\pS$-morphism
$\Phi=X_s\to X_t$ by the variables of the definition $\Phi=(\phi,L)$
it is possible to give for equivalently describe them by Bratteli
diagrams (or their biadjacency matrices) and unitarities of the target
algebra. As a slogan (whose concrete meaning is delivered by
Proposition \ref{thm:reductionU}),
 \begin{align}\text{``}\Phi=(\Brat,U) \in \{ \text{Bratteli diagrams } (  \mathbf m,\mathbf q)\to (\mathbf n,\mathbf r)\} \times
\uni(\bn )
\text{''}\,. \label{ParametrisationbyBratteliUnit}\end{align} (Observe
 this last group's argument coincides with a datum of $\Brat$.)  We
 remark that the unitary group could be further be reduced
 to\footnote{The projective group $\mathrm{PU}(m) $ is the unitary
 group modded out by its center: $\mathrm{PU}(m)=\uni(m)/Z(\uni(m))
 =\uni(m)/ \uni(1)$.} $\prod_{i=1\ldots,l_t} \mathrm{PU}(n_i)$, since
 $L$ appears only through its adjoint action.

\begin{example}[Illustrating notation in the proofs above]
Consider as input the diagram $\Brat$ of \eqref {example_illustrative},
with labels for $H_s$ in the above row still to be determined.
  \begin{align} \label{example_illustrative}
    \Brat&=
 \raisebox{-1.091cm}{ \includegraphics[height=2.3cm]{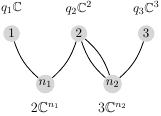}}
  \quad  C=\Big(\begin{smallmatrix}
  1& 0 \\[-.00041ex]
  1 & 2 \\[-.00041ex]
  0 & 1
   \end{smallmatrix}\Big)
 \quad
 \bm=  (1,2,3)\trans 
 \quad
\br =  (2,3)\trans \end{align}
By Prop.~\ref{thm:characHomFerm} morphisms $(A_s,H_s) \to
(A_t,H_t)$ exists only if $\bq=C \cdot \br$ and $\bn=C\trans \cdot \bm$,
whose only solution is $\bq= (2,8,3)\trans $ and $\bn=(3,7)\trans$.
Any $\phi:A_s \to A_t$ is of the form $\phi(\ba) =u \Brat(\ba)u^*$,
$u\in \mathcal U(A_t)$ and $\ba=(z,a,b)\in A_s=\C\oplus \M 2 \oplus \M
3$. Written down in full,
\begin{align*}
\lambda_s (z,a,b)=\bigg(\begin{smallmatrix}
                   z1_2 & 0 & 0 \\
                   0 & 1_8 \otimes a& 0\\
                   0 &0 &  1_3\otimes b
                  \end{smallmatrix}\bigg)
\qquad
\lambda_t (a'_1,a'_2)=\Big(\begin{smallmatrix}
                    1_2 \otimes  a_1' & 0  \\
                   0 & 1_3 \otimes a_2' \\
                  \end{smallmatrix}\Big)
                  \qquad
\Brat(\ba)=\Bigg[ \begin{matrix}
                   \begin{smallmatrix}
                    z & 0 \\
                    0 & a
                   \end{smallmatrix} \!\!\!\! & \boldsymbol{0}  \\
                   \boldsymbol{0} & \!\!\! \begin{smallmatrix}
                    a & 0 & 0 \\
                    0 & a & 0 \\
                    0& 0& b
                   \end{smallmatrix} \\
                  \end{matrix}\Bigg]
\end{align*}
Now we focus on the unitarity $L:\C^2\oplus2\C^8\oplus 3\C^3\to
2\C^3\oplus 3\C^7$.  Suppose that  $\pi$ is any permutation such that the matrix
$P_\pi$ satisfies (for explicit computation of such $\pi$ see
Ex.~\ref{ex:permmatrix})
\begin{align}
P_\pi \diag ( 1_2  \otimes z ,1_8 \otimes a, 1_3 \otimes b) P_\pi^* = \diag\Big [  1_2 \otimes \big (\begin{smallmatrix}
                                                                z& 0\\
                                                                0 & a
                                                               \end{smallmatrix}\big) ,
                                                               1_3 \otimes \Big (\begin{smallmatrix}
                                                                a& 0 & 0 \\
                                                                0 & a & 0 \\
                                                                0 & 0 & b
                                                               \end{smallmatrix}\Big)
                                                               \Big]\,. \label{conjugationbyPpi}
\end{align}
Then any $L$ satisfying the compatibility condition, $\Adj L \lambda_s
(\ba)= \lambda _t ( \Adj u \Brat(\ba))$ is of the form $L=\exp(\ii
\theta) \lambda_t(u) ( u_t \cdot P_\pi \cdot u_s)$ where $u_s= \diag (
u_{s,1} ,1_2\otimes u_{s,2},1_3\otimes u_{s,3} )$ with $u_{s,i}$ in
the $i$-th summand of $\mathcal U(A_s)$, and $u_t= \diag (1_2\otimes
u_{t,1} ,1_3\otimes u_{t,2})$, $u_{t,i}$ in the $i$-th summand of
$\mathcal U(A_t)$ and $\exp(\ii\theta)\in \uni(1)$.  However, by
substituting $P_\pi$ by $u_t \cdot P_\pi \cdot u_s$ in
\eqref{conjugationbyPpi}, these actions by $\mathcal U(A_s)$ and
$\mathcal U(A_t)$, as well as the abelian phase $\exp(\ii \theta)$,
are seen to be trivial, and since $L$ appears acting by $\Adj L$, do
not modify $L$ in $\pS$ (although they do in $\ppS$). Alternatively,
if rather $\{\bn,\bm,\bq,\br\}$ is the input, many $\Brat: \bm\to \bn$
exist, like
\begin{align*}
 \runter{\includegraphics{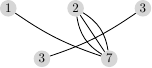} \quad\qquad  \includegraphics{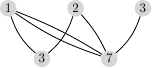}\quad \qquad  \includegraphics{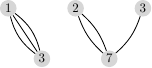}}\quad\ldots
\end{align*}
but only the $\Brat$ from \eqref{example_illustrative} is
compatible also with the Hilbert spaces, so
\begin{align*}
\hom_{\pS} [  X_s(\bm,\bq) ,X_t(\bn,\br)]  = \uni(3)\times \uni(7)\,.
\end{align*}
\end{example}


\begin{example}(How $\Brat$ defines the permutation $P_\pi$.) \label{ex:permmatrix}
Consider the Bratteli diagram
\begin{align*} \Brat=
 \raisebox{-.551cm}{\includegraphics{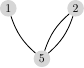}}\end{align*}
 \begin{minipage}{.60\textwidth}
with actions $\lambda_s$ and $\lambda_t$ on $H_s=3\,\C \oplus 6\,\C^2$
and $H_t=3\, \C^5$ and each $\C^i$ being acted on by $\M i$ in the
natural way.  The compatibility map $L:H_s \to H_t$ is then given by a
permutation matrix (denoted by $P_\pi$) that implements the
identification $H_s\cong H_t$ and is explicitly given by the
(certainly unitary) permutation-matrix $P_\pi$ associated to
permutation $\pi=(2,6,4)(3,11,10,9,8,7,5)$, shown on the right.
 \end{minipage}\quad
 \begin{minipage}{.373\textwidth}
 $ P_\pi=\left[
\begin{smallmatrix}
 1 & 0 & 0 & 0 & 0 & 0 & 0 & 0 & 0 & 0 & 0 & 0 & 0 & 0 & 0 \\
0 & 0 & 0 & 1 & 0 & 0 & 0 & 0 & 0 & 0 & 0 & 0 & 0 & 0 & 0 \\
0 & 0 & 0 & 0 & 1 & 0 & 0 & 0 & 0 & 0 & 0 & 0 & 0 & 0 & 0 \\
0 & 0 & 0 & 0 & 0 & 1 & 0 & 0 & 0 & 0 & 0 & 0 & 0 & 0 & 0 \\
0 & 0 & 0 & 0 & 0 & 0 & 1 & 0 & 0 & 0 & 0 & 0 & 0 & 0 & 0 \\
0 & 1 & 0 & 0 & 0 & 0 & 0 & 0 & 0 & 0 & 0 & 0 & 0 & 0 & 0 \\
0 & 0 & 0 & 0 & 0 & 0 & 0 & 1 & 0 & 0 & 0 & 0 & 0 & 0 & 0 \\
0 & 0 & 0 & 0 & 0 & 0 & 0 & 0 & 1 & 0 & 0 & 0 & 0 & 0 & 0 \\
0 & 0 & 0 & 0 & 0 & 0 & 0 & 0 & 0 & 1 & 0 & 0 & 0 & 0 & 0 \\
0 & 0 & 0 & 0 & 0 & 0 & 0 & 0 & 0 & 0 & 1 & 0 & 0 & 0 & 0 \\
0 & 0 & 1 & 0 & 0 & 0 & 0 & 0 & 0 & 0 & 0 & 0 & 0 & 0 & 0 \\
0 & 0 & 0 & 0 & 0 & 0 & 0 & 0 & 0 & 0 & 0 & 1 & 0 & 0 & 0 \\
0 & 0 & 0 & 0 & 0 & 0 & 0 & 0 & 0 & 0 & 0 & 0 & 1 & 0 & 0 \\
0 & 0 & 0 & 0 & 0 & 0 & 0 & 0 & 0 & 0 & 0 & 0 & 0 & 1 & 0 \\
0 & 0 & 0 & 0 & 0 & 0 & 0 & 0 & 0 & 0 & 0 & 0 & 0 & 0 & 1
\end{smallmatrix}\right]$
 \end{minipage}
\vspace{1ex}
 \par
\end{example}

\begin{corollary}[Automorphisms] \label{coro:Automorphisms}
For $X\in \pS$ parametrised as $X=X(\bn,\br)$ by $\bn,\br\in \Zpos^l$,
\begin{align*}
\Aut_\pS  (X)= \coprod_{\sigma \in \Sym(\bn,\br)}  \Big\{\textstyle\prod_{j=1}^l \uni(n_j) \Big\}\,,
\end{align*}
meant as an
equality of sets (cf. group structure later),
 where $\Aut_\pS  (X)$ are the invertible elements of
 $\hom_\pS(X,X) $ and \begin{align*}\Sym(\bn,\br)=\{ \sigma \in
   \Sym(l) : n_{\sigma(i)} =n_i \text{ and } r_i=r_{\sigma(i)} ,
   i=1,\ldots,l\} \,.\end{align*} That is, $(ij) \in \Sym(l)$ is in $\Sym(\bn,\br)$ only if both $n_i=n_j$ and
 $r_i=r_j$.
\end{corollary}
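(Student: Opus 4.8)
The plan is to specialise Proposition~\ref{thm:reductionU} to the endomorphism case $X_s = X_t = X(\bn,\br)$ and then to isolate, among all compatible Bratteli diagrams, those that index \emph{invertible} morphisms. Setting $\bm = \bn$, $\bq = \br$ and $l_s = l_t = l$ in Proposition~\ref{thm:reductionU} gives $\hom_\pS(X,X) \simeq \coprod_{\Brat} \uni(\bn)$, where the compatible Bratteli diagrams $\Brat : (\bn,\br) \to (\bn,\br)$ are exactly the biadjacency matrices $C \in M_l(\Z_{\geq 0})$ with $\bn = C\trans \bn$ and $\br = C\br$ (the absence of zero rows or columns, noted there, is here automatic, since a zero column $j$ would force $n_j = 0$ and a zero row $i$ would force $r_i = 0$). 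It then remains to determine which of these diagrams index automorphisms and to check that the unitary fibre is unaffected.

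First I would argue that $(\phi,L) \in \End_\pS(X)$ is invertible if and only if the underlying $*$-algebra map $\phi$ is an automorphism of $A = \oplus_{w} \M{n_w}$. Indeed, the composite of two $\pS$-morphisms has as algebra part the composite of the algebra parts, so a two-sided inverse of $(\phi,L)$ forces $\phi$ to be bijective; conversely, if $\phi$ is a $*$-automorphism then $L$ is invertible (being unitary) and $(\phi^{-1}, L^*)$ is a morphism, since the compatibility relation $\lambda[\phi(a)] = L\lambda(a)L^*$ reads, after $a \mapsto \phi^{-1}(b)$, as $\lambda[\phi^{-1}(b)] = L^*\lambda(b)L$, which exhibits $(\phi^{-1}, L^*)$ as the inverse of $(\phi,L)$.

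The heart of the argument is then to show that $\phi$ is a $*$-automorphism precisely when its biadjacency matrix $C$ is a permutation matrix $P_\sigma$. Under composition, Bratteli diagrams multiply as biadjacency matrices, and the identity endomorphism has $C = 1_l$; hence an invertible $\phi$ yields $C, C' \in M_l(\Z_{\geq 0})$ with $C C' = C' C = 1_l$, and a nonnegative integer matrix admitting a nonnegative integer inverse is necessarily a permutation matrix (equivalently, by Artin--Wedderburn and Skolem--Noether a $*$-automorphism permutes the simple summands, mapping $\M{n_i}$ onto $\M{n_{\sigma(i)}}$, which is possible only when the dimensions agree). Feeding $C = P_\sigma$ back into the two compatibility equations turns $\bn = C\trans\bn$ into $n_i = n_{\sigma(i)}$ and $\br = C\br$ into $r_i = r_{\sigma(i)}$, i.e.\ exactly the condition $\sigma \in \Sym(\bn,\br)$; conversely every such $\sigma$ produces a compatible permutation diagram, so automorphisms are indexed in their Bratteli part by $\Sym(\bn,\br)$. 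The unitary fibre $\uni(\bn) = \prod_{j=1}^l \uni(n_j)$ attached to each by Proposition~\ref{thm:reductionU} is untouched by this selection, yielding the claimed set-level equality $\Aut_\pS(X) = \coprod_{\sigma \in \Sym(\bn,\br)} \prod_{j=1}^l \uni(n_j)$.

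I expect the main obstacle to be the clean justification that the biadjacency matrix of an automorphism is a permutation matrix and nothing more, ruling out a non-permutation $C$ that satisfies $\bn = C\trans\bn$ and $\br = C\br$ yet fails to be invertible. The safest route is the monoid-units argument ($C C' = 1_l$ with both matrices nonnegative integral), keeping the structure-theoretic statement about $*$-automorphisms permuting simple summands as the conceptual backup. I would also make explicit that it is the Hilbert-space layer that upgrades the dimension constraint $n_i = n_{\sigma(i)}$ to the full condition $\sigma \in \Sym(\bn,\br)$ by additionally demanding $r_i = r_{\sigma(i)}$: without a compatible unitary $L$, such a $\phi$ would not even define a $\pS$-morphism, so permutations violating the multiplicity matching simply do not occur in the coproduct.
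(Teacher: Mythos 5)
Your proof is correct, but it reaches the conclusion by a genuinely different route than the paper. The paper's proof never discusses invertibility at all: it shows directly that \emph{every} compatible Bratteli diagram $\Brat:(\bn,\br)\to(\bn,\br)$ is already a permutation matrix, by a positivity/counting argument on the two linear constraints --- since all entries of $\bn$ and $\br$ are strictly positive, $C$ can have no zero row or column, and then summing $\bn = C\trans\bn$ (resp.\ $\br = C\br$) forces every column (resp.\ row) sum of $C$ to equal $1$, so $C=P_\sigma$ with $\sigma\in\Sym(\bn,\br)$. In particular the ``main obstacle'' you flag at the end --- a non-permutation $C$ satisfying both compatibility equations yet failing to be invertible --- simply does not exist, and the paper's argument yields the stronger fact $\End_\pS(X)=\Aut_\pS(X)$ as a byproduct. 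Your route instead localises invertibility in the algebra layer ($\phi$ must be a $*$-automorphism, hence its biadjacency matrix is a unit in the monoid $M_l(\Z_{\geq 0})$, hence a permutation) and only then reads off the condition $\sigma\in\Sym(\bn,\br)$ from the compatibility equations; this is logically sound (your verification that $(\phi^{-1},L^*)$ inverts $(\phi,L)$, and that the unitary fibre is untouched, are both correct) and would survive in settings where non-invertible endomorphisms do occur, but it is less economical here and misses the observation that the Hilbert-space constraints alone already rule out everything except permutations.
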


\begin{proof} By Proposition \ref{thm:characHomFerm}, a compatible Bratteli diagram
$ \Brat:(\bn,\br)\to (\bn,\br)$ has a biadjacency matrix $C$ such that
  $\bn=C\trans \cdot \bn$ and $\br=C\cdot \br$. Since all entries of
  $\br$ and $\bn$ are positive, $C$ cannot have nonzero rows or
  nonzero columns.  Having any entry in a column is larger than $1$
  yields a sum of entries $\sum_{i,j} C_{i,j} n_i > \sum_j n_j$,
  contradicting $\bn=C\trans \cdot \bn$. Similarly the nonzero entries
  in rows of $C$ must be $1$. Thus $C$ is orthogonal, which, being
  nonnegative, is equivalent to be a permutation matrix $C=P_\sigma$
  for $\sigma\in\Sym(l)$ that respects the matrix size $n_{\sigma(i)}
  =n_i$ and degeneracy $r_{\sigma(i)} =r_i$.
\end{proof}

\begin{example}[Automorphism]
If ${\bn \choose \br}= {2\,2\,4\,4\,5\,5\,5\,5 \choose
  1\,2\,2\,2\,1\,1\,1\,3 }$ then $\Sym(\bn,\br )= \Sym(2)\times
\Sym(3)$ and $X=X(\bn,\br)$ has a (set underlying to the) group
$\Aut_\pS (X)$ consisting of 12 copies of $\uni(2)^2\times
\uni(4)^2\times \uni(5)^4$, since 12 Bratteli diagrams exist for the
given data.
\end{example}

\section{Quiver representations on prespectral triples}\label{sec:rep_theo}

Before addressing the main point of this section, representation
theory, in Sec.~ \ref{sec:Q-reps} we introduce quivers in the next.
Throughout this section, $B$ will be a finite-dimensional, unital
$*$-algebra.
 \subsection{Quivers weighted by operators}\label{sec:weighted_quivers}
A \textit{quiver} is a directed graph $Q= ( Q_0,Q_1) $. Its vertex set
is denoted by $Q_0$ and its set of edges by $Q_1$, both of which are
assumed to be finite, at least so in this paper. The edge orientation
defines maps $s, t:Q_1 \rightrightarrows Q_0$ determined by $s(e) \in
Q_0$ being the \textit{source} and $t(e)\in Q_0$ the
\textit{target}\footnote{ Since the notation $t(e)$ could have,
depending on the source, the opposite meaning, we stress $t$ stands
here for target.  Elsewhere `target' is called `head' and `source' is
referred to as `tail', so $t(e)$ and $h(e)$ are used, respectively,
for our $s(e)$ and $t(e)$.}  of an edge $e\in Q_1$.  Multiple edges,
that is $e_1,\ldots,e_n \in Q_1$ with $s(e_1)=s(e_2) =\ldots =s(e_n) $
and $t(e_1)=t(e_2) =\ldots =t(e_n)$, are allowed, as well as
\textit{self-loops}, to wit those $e\in Q_1$ with $s(e)=t(e)$.
\par

A quiver $Q$ is $B$-edge-weighted by a $*$-algebra $B$, or just $B$\textit{-weighted}, if
 there is a map $b: Q_1\to B $. The matrix of weights,
 $\mathscr{A}_Q(b)=(\mathscr{A}_Q(b)_{i,j}) \in M_{\# Q_0} (B)$, has entries
 \begin{align}\label{symm_weight_matrix_def}
 [\mathscr{A}_Q(b)]_{i,j} = \sum_{ \substack{ e \in Q_1 \\ s(e) =i
     \\ t(e)=j } } b_e \qquad i,j \in Q_0\,.
  \end{align}
The symmetrised weight matrix $\mathscr{A}_Q\symmetr(b)
\in M_{\#Q_0 } (B)$ is defined by its entries being
\begin{align*}
[ \mathscr{A}_Q\symmetr(b)]_{i,j}= \sum_{ \substack{ e \in Q_1 \\ s(e) =i
     \\ t(e)=j } } b_e+ \sum_{ \substack{ e \in Q_1 \\ s(e) =j
     \\ t(e)=i } } b_e^* \qquad i,j \in Q_0\,.
\end{align*}
\noindent
 Clearly $ \mathscr{A}_Q\symmetr(b) \in M_{\#Q_0 }
 (B) $ is a self-adjoint matrix.
\begin{definition}\label{def:Augmentation}
   Let $Q$ be a quiver and denote by $Q^\star$ the following
   \textit{augmentation of} $Q$
  \begin{align} \label{augmentation}
  Q^\star=( Q_0, Q_1 \cup \bar Q_1)\qquad \bar Q_1 = \{ \bar e:  e\in Q_1 , t(e)\neq s(e) \}\,,
  \end{align}
  where $\bar e$ is the edge $e$ with the opposite orientation,
  $s(\bar e) = t( e)$ and $t(\bar e)=s (e)$.
\end{definition}
\noindent
Notice that self-loops
  cause no additional edges in this augmentation
  (which is explicit in Ex.~\ref{ex:Jordan1}).

 \begin{example}
For the triangle quiver $C_3$ of Figure \ref{fig:triangle_weights} any
weight matrix is of the form
\begin{align*} \nonumber
\mathscr{A}_{C_3}(b)= \begin{pmatrix}
       0 & b_{12} & 0 \\
       0 & 0 & b_{23} \\
       b_{31} & 0 & 0
      \end{pmatrix}\,\,
      \text{while its symmetrisation reads }\,\,
     \mathscr{A}\symmetr _{C_3}(b) = \begin{pmatrix}
       0 & b_{12} & b_{31}^* \\
       b_{12}^*  & 0 & b_{23} \\
       b_{31} & b_{23}^* & 0
      \end{pmatrix}\end{align*}
      with $b_{ij}\in B$. 
The symmetric quiver $C_3$ of Figure \ref{fig:triangle_weights}
has the following general weight matrix:
\begin{align*}\mathscr{A}_{C_3^\star}(b)=  \begin{pmatrix}
       0 & b_{12} & b_{13} \\
       b_{21} & 0 & b_{23} \\
       b_{31} & b_{32} & 0
      \end{pmatrix}\end{align*}
 which is not forced to be self-adjoint (as $b_{12}$ could be
 chosen independent of $b_{21}$), but $     \mathscr{A}\symmetr _{C_3}$ is:
      \begin{align*}
     \mathscr{A}\symmetr _{C_3^\star} (b)=\begin{pmatrix}
       0 & b_{12} +b_{21}^* & b_{13} + b_{31}^* \\
       b_{21} + b_{12}^* & 0 & b_{23} + b_{32}^*\\
       b_{31}+b_{13}^* & b_{32} + b_{23}^*  & 0
      \end{pmatrix}\end{align*}
 \end{example}
\begin{figure}[htb!]
\includegraphics[width=.8\textwidth]{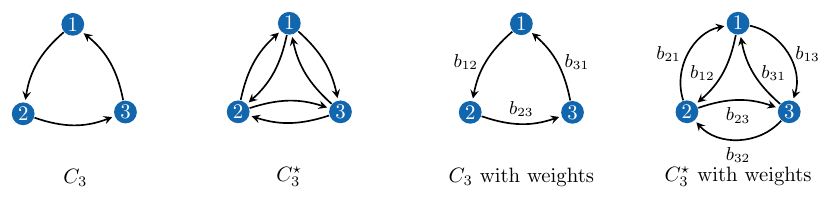} \vspace{-2ex}
 \caption{\label{fig:triangle_weights}
 The $3$-cycle quiver $C_3$, its augmented quiver $C_3^\star$ and their weights 
 }
\end{figure}



\subsection{Path algebras} \label{sec:path_alg}
Recall that a \textit{path} $p=[e_1,\ldots, e_ k ]$ in a quiver $Q$ is
an ordered sequence $e_1, e_2, \ldots, ,e_{ k -1},e_ k $ of edges
$e_i\in Q_1$ with $t (e_a) =s (e_{a+1})$ for each $a=1,\ldots, k -1$,
for some $ k \in\Zpos$ which we refer to as the length of $p $.
Such integer $k$ will be denoted by $\ell(p)$.  We order the
edges from right to left, so any path looks for $e_1,\ldots, e_ k \in
Q_1$ and $v_0=s(e_1), v_j=t(e_{j}) \in Q_0 $ like
\begin{align} p= [e_1,\ldots,e_k]= \!\! \vspace{-1ex}
\begin{tikzcd}[ampersand replacement=\&]
 v_ k \& v_{ k -1 } \arrow{l}{e_{ k }} \& \arrow{l}{e_{ k -1}} \cdots
 \& \arrow{l}{e_{2}} v_1 \& \arrow{l}{e_{1}} v_0\!
 \end{tikzcd}\! \,. \label{pathcomposition}\end{align}
The source $s(p)$ (resp. target $t(p$)) of a path $p$ is the source
(resp. target) of its first (resp. last) edge, $s(p)=v_0$ and
$t(p)=v_k$ in the case above. If from $v$ to $w$ there is a single
edge $e$, we write $e=(v,w)$, and generally for paths $p=[e_1,\ldots,
  e_ k ] $ made of single edges, an alternative notation for $p$ in
terms of a sequence of vertices is $p=(s(e_1),t(e_2),\ldots, t(e_k)
)$.

The set $\mathcal P Q$ consists of all paths in $Q$. These generate
the \textit{path algebra}
 $ \C Q= \langle \mathcal P Q\rangle_\C=\big \{  \sum_{ p \in \mathcal P Q } c_p p : c_p\in \C  \big\}
 $. Given two paths $p_1=[e_1,\ldots,e_k]$ and $
p_2=[f_1,\ldots,f_l]$, their product $p_2 \cdot p_1
=[e_1,\ldots,e_k,f_1,\ldots,f_l]$ is defined to be the concatenation
of $p_2$ after $p_1$ if $t(p_1) =s(p_2) $ and $p_2 \cdot p_1=0$
otherwise.
The identity is $\sum_{v \in Q_0 } E_v$, where $E_v$ is the
zero-length constant path at $v$.  A \textit{loop} or \textit{closed
  path} at $v\in Q_0$ is a path $p$ of positive length with ends
attached to $v$, $t(p)=v=s(p)$. The \textit{set of loops at} $v$ is
denoted by $\Omega_vQ$. The set of loops on $Q$ based at any vertex is
$\Omega Q=\cup_{v\in Q_0} \Omega_v Q\,. $ \\

\noindent\begin{minipage}{.67\textwidth}
\begin{example}[Path algebra of a quiver]
We count the paths spanning the path algebra for the quiver $Q$ on the
right. Starting at $v=1$ only the constant path $E_1$ ends at $1$;
else one has $e$ and $e'e$, ending at $2$ and $3$, respectively.
Starting at $2$ there are two paths only $E_2$ and $e'$. At $3$ only
the constant path $E_3$ exists, yielding for the most general path the
expression for $P$ in the right for some $\alpha,\beta,c_a \in \C$.
Denoting by $p' \in \C Q$ a path on the same basis with those complex
parameters primed, one has $p' \cdot p = \alpha \beta' e' e +\alpha
c_2 e + \beta c_3 e' +\alpha ' c_1 e + \sum_{a=1}^3 c_a c_a' E_a $.
\end{example}
\end{minipage}\hfil
\begin{minipage}{.383\textwidth}
\begin{align*}\nonumber
Q &=\nonumber \runter{\includegraphics[width=2.04cm]{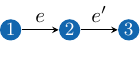}} \\[-2.3ex] \nonumber
p& = \alpha e + \beta e'+ \sum_{a=1}^3 c_a E_a \,.
\end{align*}
\end{minipage}\\[1ex]

If we add an edge $(3\to 1)$ to $Q$ of the previous example (or any
anti-parallel to the existent ones), the path algebra becomes
infinite-dimensional, as it is the general case for quivers with
cycles (the converse statement that infinite-dimensionality implies
cycles is also true \cite[Ex. 1.5.2]{bookQuivRep}).\\

\noindent

\begin{definition}\label{def:Wilson_and_Holonomy}
 Given a quiver $Q$ weighted by $b:Q_1\to B$, the \emph{holonomy}
 $\hol_b(p)$ of a loop $p=[e_1,\ldots,e_ k ]$ and its \emph{Wilson
 loop} $\Wils(p) = \Tr_B \circ \hol_b(p) $ are defined by
 \begin{align}
  \hol_b(p) :=  b_{e_1}  b_{e_2} \cdots   b_{e_ k }  \in B,\, \qquad
  \Wils(p) : = \Tr_B ( b_{e_1}  b_{e_2} \cdots   b_{e_ k } ) \in \C \,. \label{Wilson_and_Holonomy}
     \end{align}
The previous definition is for positive $k$. But it will be also
practical to have $\Wils$ defined\footnote{At risk of being redundant,
we stress that the path $E_v$ should never be confused with a
self-loop $o_v$ at a vertex $v$, which has length $1$ and generally a
non-trivial $\Wils(o_v)$. In this article, $\Wils$ is still classical
and only in \cite{MakeenkoMigdalNCG} we cared about expectation
values.} at $E_v$, the length-$0$ path at $v\in Q_0$. We define a
vanishing holonomy for $E_v$, so thus $\Wils(E_v)=0$.
\end{definition}
Above,  the definition of holonomy $  \hol_b(p)$ could have been
$ b_{e_k} \cdots b_{e_2}    b_{e_ 1 }$ instead, but since the
interesting action functional arises from unitary weights ($b_e^* b_e=1, b_eb_e^*=1$ for all $e\in Q_1$),
and since that functional will depend on the real part of $ \Wils(p) $,
thus of $\hol_b(p) + \hol_b(p)^*=\hol_b(p) + \hol_b(\bar p)$, (being $\bar p$ the reverse of $p$)
this order is not relevant. Let us now denote by $\Tr_{M_n(B)}$ the trace on the algebra of $n\times n$-matrices $M_{n}
(B)$ with entries in $B$, $\Tr_{M_n( B) }( W ) = \sum_{i=1}^n
\Tr_B(W_{i,i})$ for $ W\in M_{n} (B)$.

 \begin{proposition}\label{thm:Wk}
  Let $ k \in \Zpos $ and suppose that $Q$ is $B$-weighted by $\{b_e
  \in B\}_{e\in Q_1}$. Then
  \begin{align} \label{Wilson loop_wilsonloop}
\Tr_{M_n B}  \big( [\mathscr{A}_Q (b) ] ^{ k }  \big)  =  \sum_{ \substack{p \in \Omega Q  \\ \ell(p)=k } }  \Wils(p) \qquad \text{where } n=\# Q_0\,.
   \end{align}
  \end{proposition}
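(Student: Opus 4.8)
The plan is to compute the $(i,j)$-entry of the matrix power $\mathscr{A}_Q(b)^k$ explicitly, identify it with a sum of holonomies over length-$k$ paths from $i$ to $j$, and then specialise to the diagonal and take the $B$-trace. First I would expand the matrix product in $M_n(B)$:
\[
[\mathscr{A}_Q(b)^k]_{i,j} = \sum_{i_1,\ldots,i_{k-1}\in Q_0} [\mathscr{A}_Q(b)]_{i,i_1}[\mathscr{A}_Q(b)]_{i_1,i_2}\cdots[\mathscr{A}_Q(b)]_{i_{k-1},j},
\]
where the factors are multiplied in $B$ in the displayed left-to-right order. Substituting the definition \eqref{symm_weight_matrix_def}, each factor $[\mathscr{A}_Q(b)]_{i_{a-1},i_a}$ is the sum of $b_e$ over edges $e$ with $s(e)=i_{a-1}$ and $t(e)=i_a$; distributing the products over these inner sums rewrites the right-hand side as a sum over tuples $(e_1,\ldots,e_k)$ of edges satisfying $s(e_1)=i$, $t(e_k)=j$ and $t(e_a)=s(e_{a+1})$ for $1\le a\le k-1$. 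These tuples are exactly the length-$k$ paths $p=[e_1,\ldots,e_k]$ from $i$ to $j$, and the associated summand is $b_{e_1}b_{e_2}\cdots b_{e_k}=\hol_b(p)$. (Equivalently, one can run a one-line induction on $k$.)

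Then I would take the matrix trace. By the definition $\Tr_{M_n B}(W)=\sum_{i=1}^n \Tr_B(W_{i,i})$, only the diagonal entries contribute, i.e.\ the paths with $i=j$; these are precisely the loops based at $i$, which constitute $\Omega_i Q$. Applying $\Tr_B$ to each summand and recalling $\Wils(p)=\Tr_B(\hol_b(p))$ gives
\[
\Tr_{M_n B}(\mathscr{A}_Q(b)^k) = \sum_{i=1}^n \sum_{\substack{p\in\Omega_i Q\\ \ell(p)=k}} \Wils(p).
\]
Since $\Omega Q=\bigcup_{v\in Q_0}\Omega_v Q$ is the disjoint union over base vertices (the base vertex of a closed path is determined by the path), the double sum collapses to $\sum_{p\in\Omega Q,\,\ell(p)=k}\Wils(p)$, which is the claim.

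The computation is essentially routine, so the only points demanding care are bookkeeping ones. Because $B$ is noncommutative, the order of the product $b_{e_1}\cdots b_{e_k}$ matters, and I must verify it agrees with the convention fixed in \eqref{Wilson_and_Holonomy}: this works out because the matrix indices run from source to target left-to-right, so matrix multiplication concatenates edge weights in the same order as path composition in \eqref{pathcomposition}. A second point is that multiple edges are handled automatically, since the inner sum in \eqref{symm_weight_matrix_def} ranges over all parallel edges, so each path contributes exactly once with the correct multiplicity; and since $k\ge 1$, no length-$0$ constant path $E_v$ enters, which is consistent with the convention $\Wils(E_v)=0$.
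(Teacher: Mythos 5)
Your proof is correct and follows essentially the same route as the paper: expand the $k$-th matrix power, identify the resulting summands with length-$k$ paths (with the product order matching the path-composition convention), and take the trace to restrict to loops. If anything, your direct distribution of the inner sums is slightly cleaner than the paper's phrasing via nonvanishing of trace terms, but the argument is the same.
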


 \begin{proof} Let us write $ \mathscr A_Q (b)$ as $\mathscr A_Q$
and assume  
$ \Tr_B[ (\mathscr A_Q)_{v,i_1} (\mathscr A_Q)_{i_1,i_2} \cdots (\mathscr A_Q)_{i_{k-2},i_{k-1} } (\mathscr A_Q)_{i_{k-1},v }  ] \neq 0$
for some fixed set $v, i_1,i_2,$ $\ldots, i_{k-1} \in Q_0$ of
vertices. This implies, by linearity of $\Tr_{B}$, that its argument
is non-zero, so none of the entries $(\mathscr A_Q)_{v,i_1}, (\mathscr
A_Q)_{i_1,i_2}, \ldots, (\mathscr A_Q)_{i_{k-2},i_{k-1} }, (\mathscr
A_Q)_{i_{k-1},v } $ can vanish. Hence there exist edges $e_1$ from $v$
to $i_1, e_2$ from $i_1$ to $i_2$ $\ldots$ and $e_k$ from $i_{k-1}$ to
$v$ on the quiver $Q$. Thus $p=(e_1,\ldots, e_k)$ is a path of length
$k$, or more specifically a loop based at $v$, $p\in \Omega_vQ$, which
one can summarise as:
$ \Tr_B [(\mathscr A_Q^k)_{v,v}   ]
 =\sum_{ \substack{ p \in \Omega_v ( Q),  \ell(p)=k  } } \Wils(p)$, and summing over $Q_0$,
 \begin{align*}
\Tr_{M_nB}( \mathscr A_Q^k   ) & = \sum_{v \in Q_0 }  \Tr_B( [\mathscr A_Q^k]_{v,v} )  =
\sum_{v \in Q_0 } \sum_{ \substack{ p \in \Omega_v ( Q) \\ \ell(p)=k  } }  \Wils(p) \,. \end{align*}
The sum over $v$ cancels the restriction the loops being based at $v$
in $\Omega_v ( Q)$, so one sums over all paths $p$ in $ \cup_{v\in
  Q_0} \Omega_v ( Q) = \Omega Q$.
\end{proof}

\begin{figure}
 \includegraphics[width=.76\textwidth]{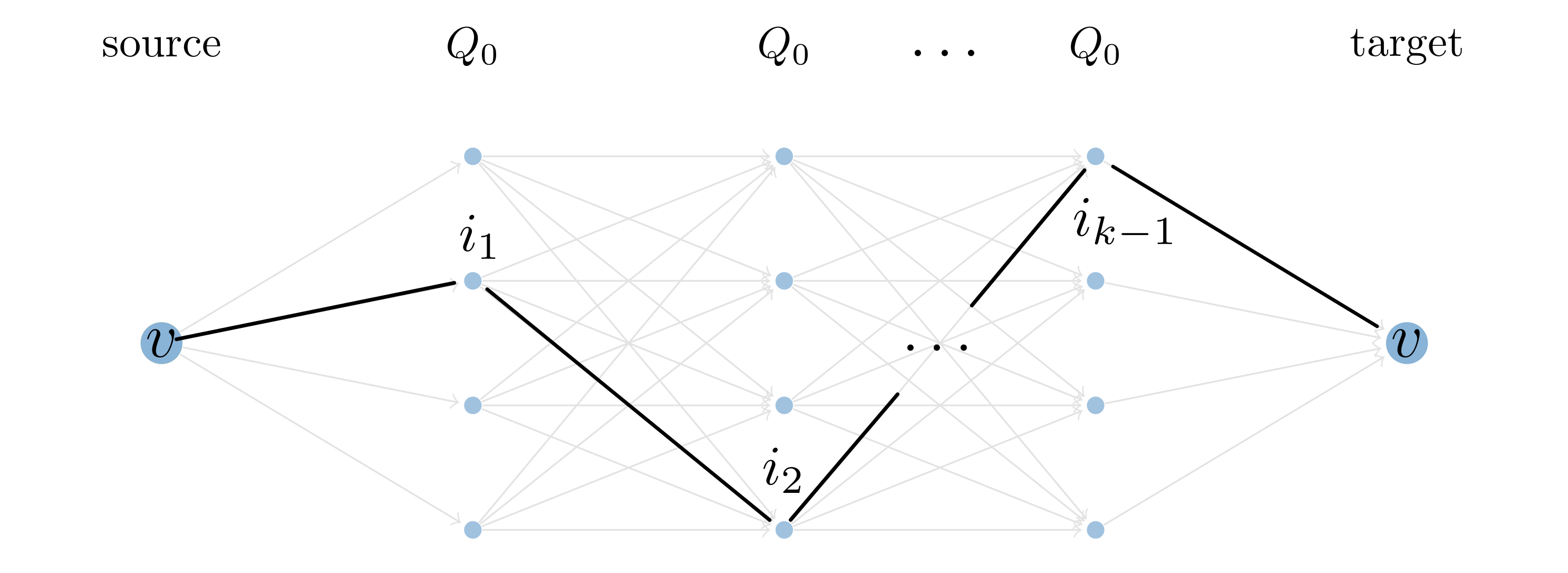}
 \caption{On the proof of Proposition \ref{thm:Wk} and Corollary
   \ref{thm:Wksym}.}
\end{figure}

 \begin{corollary}\label{thm:Wksym}
 With $p=[e_1,\ldots,e_ k ]$ for $e_i\in Q_1$, $
\Wils\symmetr (p)= \Tr  \big(b\symmetr_{e_1} \,b\symmetr_{e_2} \,\cdots   b\symmetr_{e_ k } \big)$ and the  same hypothesis of Proposition \ref{thm:Wk}, one has
 \begin{align} \label{pathcountingWilsonloops}
\Tr_{M_n B} \big(\, [\mathscr A_Q\symmetr (b) ] ^{ k } \,\big) =
\sum_{ \substack{p \in \Omega {Q^\star} \\ \ell(p)=k } }
\Wils\symmetr(p) 
  \,.
  \end{align}\vspace{-0ex}%
Notice that if $\bar p$ denotes the loop $p\in \Omega _v(Q^\star)$ run
backwards,
\begin{align} \Wils\symmetr(\bar p) = \overline{ \Wils\symmetr(p)}  \,, \label{Wbar=barW}
\end{align}
where the bar on the \textsc{rhs} denotes complex conjugate, so $\Tr_{M_n B} \big(\, [\mathscr A_Q\symmetr (b) ] ^{ k }  \,\big) $
is real-valued.
  \end{corollary}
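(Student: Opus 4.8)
The plan is to deduce the identity directly from Proposition~\ref{thm:Wk}, by recognising the symmetrised weight matrix $\mathscr A_Q\symmetr(b)$ as an \emph{ordinary} weight matrix of the augmented quiver $Q^\star$ for a suitable weighting. I would weight $Q^\star$ by the map $b\symmetr : Q_1\cup\bar Q_1 \to B$ given, on each non-self-loop $e$ (so $s(e)\neq t(e)$), by $b\symmetr_e := b_e$ and $b\symmetr_{\bar e}:= b_e^*$, and on each self-loop $e$ (for which no reversed edge is added to $Q^\star$) by $b\symmetr_e := b_e + b_e^*$. An entry-by-entry comparison then yields
\begin{align*}
\mathscr A_{Q^\star}(b\symmetr) = \mathscr A_Q\symmetr(b)\,.
\end{align*}
Indeed, for $i\neq j$ the edges of $Q^\star$ from $i$ to $j$ are the original $e:i\to j$, contributing $b_e$, together with the reversals $\bar e$ of the original $e:j\to i$, contributing $b_e^*$, which reproduces $[\mathscr A_Q\symmetr(b)]_{i,j}$; while for $i=j$ the self-loops are \emph{not} doubled, so the single diagonal contribution $\sum_{e:i\to i} b\symmetr_e = \sum_{e:i\to i}(b_e+b_e^*)$ matches $[\mathscr A_Q\symmetr(b)]_{i,i}$.

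With this identity the main formula is immediate. Since augmentation does not alter the vertex set, $\#Q_0^\star = \#Q_0 = n$, so Proposition~\ref{thm:Wk} applied to $Q^\star$ weighted by $b\symmetr$ gives
\begin{align*}
\Tr_{M_n B}\big([\mathscr A_Q\symmetr(b)]^{k}\big) = \Tr_{M_n B}\big([\mathscr A_{Q^\star}(b\symmetr)]^{k}\big) = \sum_{\substack{p\in\Omega Q^\star\\ \ell(p)=k}} \Wils\symmetr(p)\,,
\end{align*}
the last sum being precisely $\sum_p \Tr_B(b\symmetr_{e_1}\cdots b\symmetr_{e_k})$, i.e. the Wilson loops of $Q^\star$ for $b\symmetr$.

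For the reality claim I would first record the uniform identity $b\symmetr_{\bar e}=(b\symmetr_e)^*$, valid on reversed edges by construction and on self-loops because there $\bar e=e$ and $b_e+b_e^*$ is self-adjoint. Hence for a loop $p=[e_1,\ldots,e_k]$ its reverse $\bar p=[\bar e_k,\ldots,\bar e_1]$ has holonomy $\hol_{b\symmetr}(\bar p)=(b\symmetr_{e_k})^*\cdots(b\symmetr_{e_1})^* = \hol_{b\symmetr}(p)^*$, and applying $\Tr_B$ together with $\Tr_B(x^*)=\overline{\Tr_B(x)}$ yields $\Wils\symmetr(\bar p)=\overline{\Wils\symmetr(p)}$, which is \eqref{Wbar=barW}. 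Finally, $Q^\star$ is closed under edge-reversal, so $p\mapsto\bar p$ is a length-preserving involution of $\{p\in\Omega Q^\star:\ell(p)=k\}$; pairing each term with its image shows the sum equals its own complex conjugate and is therefore real. (Equivalently, $\mathscr A_Q\symmetr(b)$ is self-adjoint, hence so is its $k$-th power, and the $\Tr_{M_nB}$ of a self-adjoint matrix over the $*$-algebra $B$ is real.)

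The only genuine subtlety, and the step I would check most carefully, is the self-loop bookkeeping in the first paragraph: the augmentation deliberately does not duplicate self-loops, whereas symmetrising the weight matrix produces $b_e+b_e^*$ on the diagonal, so the self-loop weight $b\symmetr_e$ must absorb both summands. This is exactly what is needed for the reality identity $b\symmetr_{\bar e}=(b\symmetr_e)^*$ to persist on self-loops, where it forces $b\symmetr_e$ to be the self-adjoint combination $b_e+b_e^*$; the remaining verifications are routine.
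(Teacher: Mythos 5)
Your proof is correct and takes essentially the same route as the paper: both reduce \eqref{pathcountingWilsonloops} to Proposition~\ref{thm:Wk} by viewing the non-vanishing entries of $\mathscr A_Q\symmetr(b)$ as edges of the augmented quiver $Q^\star$, your version merely packaging this as the clean matrix identity $\mathscr A_{Q^\star}(b\symmetr)=\mathscr A_Q\symmetr(b)$ (with the self-loop weight correctly taken to be $b_e+b_e^*$, matching how $\varphi_v$ is later defined) so that the proposition can be invoked verbatim. A small bonus of your write-up is that you actually prove the reality statement via the involution $p\mapsto\bar p$ and the identity $b\symmetr_{\bar e}=(b\symmetr_e)^*$, whereas the paper only asserts \eqref{Wbar=barW}.
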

\begin{proof}
Finding contributions to $\Tr_{B} [\, (\mathscr A_Q) ^{ k }_{v,v} \,]
$ boils down to finding all possible indices $i_1,\ldots, i_{k-1}$
such that none of $(\mathscr A\symmetr_Q) _{v,i_1}, (\mathscr
A\symmetr_Q)_{i_1,i_2}, \cdots ,(\mathscr A\symmetr_Q)_{i_{ k -1},v} $
vanishes. But $(\mathscr A_Q\symmetr)_{a,c}$ does not vanish either if
there is an edge $e$ from $a$ to $c$ or if it exists in the opposite
orientation. So $(\mathscr A\symmetr_Q)_{v,i_1},$ $(\mathscr
A\symmetr_Q)_{i_1,i_2}, \cdots ,(\mathscr A\symmetr_Q)_{i_{ k
    -1},v}\neq0$ implies the existence of loops $p$ in ${Q^\star}$, or
more precisely $p\in \Omega_v {Q^\star}$. The rest of the proof
follows as that of Prop.~\ref{thm:Wk}.
\end{proof}

\begin{example}
Take the following cyclic $B$-weighted quiver
\begin{align*}
C_4=
\raisebox{-.45\height}{\includegraphics[width=1.32cm]{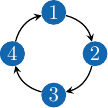}}
\qquad \qquad \mathscr A_{C_4}=\begin{pmatrix} 0 & b_{12 } & 0 & 0
\\ 0 & 0 & b_{ 23} & 0\\ 0& 0& 0 & b_{34 }\\ b_{41 } &0 &0 & 0
        \end{pmatrix}
\end{align*}
Considering the cycle $\sigma=(1234)$, for any vertex $v=1,\ldots, 4$
there is only one loop at $v$, contributing $
\Tr_B( b_{v,\sigma(v)}
b_{\sigma(v),\sigma^2 (v)} b_{\sigma^2(v),\sigma^3 (v)}
b_{\sigma^3(v),\sigma^4 (v)}) = \Tr_{B}(b_{12} b_{23} b_{34}
b_{41}) $ (the 4th power of $\sigma$ is of course the identity, hence
it is a legal contribution; and the equality is just a restatement of
$\Tr_{B}$ being cyclic).
In fact, assuming that the cyclic quiver $C_n$ in $n$ vertices is
weighted, one obtains similarly
\begin{align*}
\frac 1n
\Tr_{M_n B} [\, \mathscr A_{C_n}^{ k }  \, ]  =
\begin{cases}
\Tr_{B}(1_B)   & k=0 \\
 \Tr_{B}[ (  b_{12} b_{23} \cdots  b_{n-1,n} b_{n1} )^q ]  & k=n q\, \qquad (q\in \N) \\
0 & 0<k, \text{ $n$ does not divide $k$} \,.
\end{cases}
\end{align*}
\end{example}

\begin{example}\label{ex:traces_symm}
For the quiver above we want traces of the symmetrised weight matrix,
\begin{align*} \mathscr A_{C_4}\symmetr(b)=\begin{pmatrix}
0 & b_{12 } & 0 & b_{41 }^* \\ b_{12}^* & 0 & b_{ 23} & 0\\ 0& b_{23
}^* & 0 & b_{34 }\\ b_{41 } &0 &b_{34}^* & 0
\end{pmatrix}
\text{. Corollary \ref{thm:Wksym} states that we need } C_4^\star =
\raisebox{-.45\height}{\includegraphics[width=1.72cm]{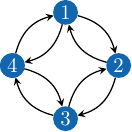}}\,.
\end{align*}
There are eight classes of paths based at, say, the vertex $1$. Since
given two vertices and an orientation, there is one single edge, we
write the paths in terms of the ordered vertices they visit. They read
\begin{itemize}
 \item $p_1=(1,2,1,4,1)$ and its (left-right) specular $p_2=(1,4,1,2,1)$

 \item $p_3=(1,2,3,2,1)$ and its specular $p_6=(1,4,3,4,1)$

 \item $p_5=(1,2,3,4,1)$ and its specular $p_7=(1,4,3,2,1)$

 \item $p_7=(1,2,1,2,1)$ and its specular $p_8=(1,4,1,4,1)$
\end{itemize}
If the cycle $\sigma=(1234)$ acts on these paths \begin{align*}p=(v_1
  ,v_2, v_3, v_4 , v_1 ) \mapsto \sigma p= (\sigma( v_1),
  \sigma(v_2),\sigma(v_3), \sigma(v_4), \sigma(v_1))\,\end{align*} we
  get all the paths of length 4, and $ \Tr_{M_4 B} \big[\, (\mathscr
    A_{C_4} \symmetr)^{ 4 } \, \big] = \sum_{q=0}^3\{ \sum_{a=1}^8
  \Wils [{\sigma^q (p_a)}]
\} $.  The Wilson loops are implicit, but immediate to compute,
e.g. $\Wils({p_2}) = \Tr_B( b_{41}^* b_{41} b_{12}b_{12}^*)
=\Wils({p_1}) $ for the paths $p_1$ and $p_2$ listed above.
\end{example}
\vspace{-4ex}
\begin{example}\label{ex:Jordan1}
 Using the Jordan quiver $J=
 {\includegraphics[height=28.5pt]{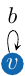}}$ ($b\in B$ is the weight),
 we illustrate now how self-loops and double edges sharing endpoints
 are treated. Any formal series $f(x)=\sum_{l=1}^\infty f_l x^l$ can
 be computed
$\Tr_B \big ( f(\mathscr A_{J}) \big) =\sum_{l=1}^\infty f_l \Tr_B(b^l).$
The Jordan quiver does not suffer from augmentation, $J^\star=J$, but
later on adding self-loops will be important. Let
$\mathring J$ denote the quiver $J$  with an extra self-loop, $\mathring J= \!\!
\raisebox{-5pt}{\includegraphics{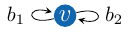}}$, with weights $b_1,b_2 \in
B$. Then the path algebra $\C \mathring J = \C\langle b_1,b_2 \rangle
$ is the free algebra in two generators, so the formal series
evaluated in the weights matrix reads
\begin{align}   \notag \Tr_B   \big (  f(\mathscr A_{\mathring J}) \big)
 \notag =\sum_{l=1}^\infty f_l \Tr_B \bigg( \sum_{\substack{ m \text{
      \tiny monic, degree $l$} \\ \text {\tiny monomials in $b_1, b_2$
}}} m \bigg)  & =    f_1 \Tr_{B}( b_1 +b_2) + f_2 \Tr_B( b_1^2 +
2b_1b_2 + b_2^2 ) \\[-3.3ex] &+ f_3 \Tr_B( 3 b_1^2 b_2 + 3 b_2 \notag b_1^2+
b_1^3 + b_2^3)  + O(4). 
\end{align}
\end{example}

 \subsection{Quiver $\pS$-representations and path algebra modules}\label{sec:Q-reps}

  \begin{definition}\label{def:QuivRepsCat}
 Given a  given (small) category $\cC$, a \textit{representation of a quiver} $Q$
 in $\mathcal C$ is a functor $\mathcal P Q\to \cC$, i.e. a pair of set maps
 \begin{align}
 \label{setmapsPQ-C}
 X: Q_0 \to\mathcal C \qquad  \Phi: \mathcal PQ \to \hom_{\mathcal C}
 \end{align}
 where the map $\Phi$ is shorthand for a family of maps $\{\Phi(p) \in
 \hom_{\mathcal C} (X_{s(p)}, X_{t(p)} )\}_{p\in \mathcal PQ}$. We
 sometimes write the arguments as subindices $\Phi_p:=\Phi(p)$ and
 $X_v:=X(v)$ in order to minimise brackets (and thus avoid $X(s(p))$
 when $v=s(p)$ for instance).
 \end{definition}

Therefore a representation of a quiver $Q=(Q_0,Q_1)$ in $\pS$ is an
association of prespectral triples $X_v=(A_v,\lambda_v,H_v)$ to
vertices $v\in Q_0$ and of $*$-algebra maps $\phi_p:A_{s(p)} \to
A_{t(p)} $ and unitary maps $L_p: H_{s(p)} \to H_{t(p)} $ to paths $p
\in \mathcal P Q$. All satisfy $L_e \lambda_{v} ( a ) L_e^* = \lambda_w[
  \phi_e(a) ] $ if  $v=s(e)$ and $w=t(e)$ for each $e\in Q_1\subset \mathcal PQ$.
  Functoriality implies that $L_p= L_{e_k}L_{e_{k-1}}\cdots L_{e_1}$
  for  $p=[e_1, e_2,\ldots, e_{n-1}, e_{k} ]\in \mathcal P Q $.

\begin{definition}\label{def:CQmodpS}
 Let $Q$ be a quiver and denote by $\C Q\modpS$ the \textit{category
   of $\C Q$-modules over $\pS$}.  To wit, objects of $\C Q\modpS$ are
 prespectral triples that further carry an action of the path algebra
 $\C Q$ by $\pS$-morphisms.  Matching our path composition
 \eqref{pathcomposition}, this action is by the left and will be
 denoted\footnote{For completeness, we stress that since this $M$ is
 also in $\pS$, it is therefore of the form $M=(A,H)$ and therefore
 the action of a path $p$ looks like $m=(a,\psi) \mapsto p\cdot
 (a,\psi)=p\cdot m$ for $a\in A,\psi\in H$.}  by $m\mapsto p\cdot m$
 ($p\in \mathcal P Q, m\in M$). For $M,M' \in \C Q\modpS $, morphisms
 $\chi \in \hom_{\C Q\modpS}(M,M')$ are $\pS$-morphisms that respect
 the action of $\C Q$, $\chi (p\cdot m) = p\cdot \chi(m)$ for all
 $m\in M$ and all $p\in \mathcal P Q$.
\end{definition}
\noindent
The previous category will be proven to be equivalent to one more interesting in this paper.
\begin{definition}\label{def:RepQ}
All functors $\mathcal PQ\to\pS$ from the free category\footnote{I
thank John Barrett and Masoud Khalkhali for correcting my previous
notation during a talk; I had written `functors $Q\to \pS$'.}
associated to a quiver $Q$ to $\pS$ form the \textit{space of
  representations},
\begin{align*} \Rep_\pS(Q) := \{ \text{functors } \mathcal PQ \to \pS  \}\,.\end{align*}
\end{definition} \par
\noindent
Actually $\Rep_\pS(Q) $ is a category too, the functor category
(other notations are $\mathcal P Q^{\pS} $ or $ [\mathcal P Q,\pS]$, which will not be used here),
but with some usual abuse of notation we shall also refer to
$\Rep_\pS(Q)$ as the objects of that category, quiver representations. Once we know that
$R=(X_v ,\Phi_p) _{v\in
  Q_0, p\in  \mathcal PQ}
  \in \ReppS(Q)$
  we are sure that we can gain any $\Phi_p$
  from the values of $\Phi$ at the edges that compose $p$, so
  we write  $ R=(X_v ,\Phi_e) _{v\in
  Q_0, e\in Q_1}$.
  Given another $ R'=(X_v' ,\Phi_e') _{v\in
  Q_0, e\in Q_1}\in \pS$, a morphism $\Upsilon\in \hom_{\Rep_\pS(Q)}(R,R')$ is by definition
a natural transformation $\Upsilon:R \to R' $,
i.e. a  family $\{ \Upsilon_y:(A_y,H_y) \to (A_y',H_y') \}_{y\in Q_0}$
that makes the diagram \eqref{comm_diag_NatTr_expl}
commutative for each $e$, wherein  $v=s(e) \text { and } w=t(e)$,
\begin{align} \label{comm_diag_NatTr_expl}
\begin{tikzcd}[ampersand replacement=\&, column sep=large]
  (A_v,H_v)  \ar [d,swap,"\Upsilon_v"] \ar [r,"{\Phi_e=(\phi_e,L_e)}"] \&  (A_w,H_w)  \ar [d, "\Upsilon_w"] \\
  (A_v',H_v') \ar [r,swap, "{\Phi_e'=(\phi_e',L_e')} "] \& (A_w',H_w')
\end{tikzcd}
\end{align}



The next classical fact for ordinary quiver representations (see \cite{bookQuivRep} for the proof
having vector spaces as target category) can be extended to prespectral triples.

\begin{proposition}\label{thm:Eq_Cats}
The following equivalence of categories holds:
$\Rep_\pS Q \simeq \C Q\modpS\,.$
\end{proposition}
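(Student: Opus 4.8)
The plan is to exhibit a pair of mutually quasi-inverse functors between $\Rep_\pS Q$ and $\C Q\modpS$, following the classical template for $\mathsf{Vect}$-valued representations but accommodating the extra structure of prespectral triples. First I would build a functor $\Theta\colon \Rep_\pS Q \to \C Q\modpS$ by \emph{totalising} a representation. Given $R=(X_v,\Phi_e)_{v\in Q_0,\,e\in Q_1}$ with $X_v=(A_v,\lambda_v,H_v)$, set $\Theta(R):=\bigoplus_{v\in Q_0} X_v$, the direct sum of prespectral triples introduced in Section~\ref{sec:ferm_systems} (algebra $\bigoplus_v A_v$ acting faithfully on $\bigoplus_v H_v$). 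I equip it with a $\C Q$-action in which the constant path $E_v$ acts by the orthogonal projection $\pi_v$ onto the $v$-summand --- equivalently, multiplication by the central idempotent $1_{A_v}$ on the algebra and projection onto $H_v$ on the Hilbert space --- and each edge $e\colon v\to w$ acts by $\iota_w\circ\Phi_e\circ\pi_v$, where $\iota_w$ is the inclusion of the $w$-summand, extended $\C$-linearly and multiplicatively to all of $\C Q$. Functoriality of $\Phi$ (the relation $L_p=L_{e_k}\cdots L_{e_1}$ and its $*$-algebra counterpart) guarantees that concatenation of paths matches composition of these maps, while the projections reproduce the relation $p_2\cdot p_1=0$ whenever $t(p_1)\neq s(p_2)$. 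On morphisms, a natural transformation $\Upsilon=(\Upsilon_y)_{y\in Q_0}$ is sent to $\bigoplus_y\Upsilon_y$; the commuting squares \eqref{comm_diag_NatTr_expl} say precisely that this intertwines the two $\C Q$-actions, hence is a morphism in $\C Q\modpS$.

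Conversely I would define $\Xi\colon \C Q\modpS\to\Rep_\pS Q$ by decomposing a module along the idempotents. For $M=(A,\lambda,H)$ carrying a $\C Q$-action, the family $\{E_v\}_{v\in Q_0}$ is a complete system of orthogonal idempotents ($\sum_v E_v=1$, $E_vE_{v'}=\delta_{vv'}E_v$), which induces an internal orthogonal decomposition $M\cong\bigoplus_{v}X_v$ with $X_v:=E_v\cdot M$, of algebra the corner $E_vAE_v$ and Hilbert space $E_vH$; I must check that each $X_v$ is again an object of $\pS$, i.e.\ a faithful finite-dimensional $*$-representation. Because in $\C Q$ one has $e=t(e)\cdot e\cdot s(e)$, the action of an edge $e\colon v\to w$ satisfies $e=E_w\, e\, E_v$ and therefore restricts to a map $\Phi_e\colon X_v\to X_w$, which the hypothesis that $\C Q$ acts ``by $\pS$-morphisms'' makes a genuine $\pS$-morphism; setting $X(v):=X_v$ and $\Phi(e):=\Phi_e$ defines a functor $\mathcal P Q\to\pS$, with functoriality inherited from the module axiom $p\cdot(q\cdot m)=(pq)\cdot m$. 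A module morphism $\chi$, commuting with every $E_v$, restricts to $\chi_v:=\chi|_{X_v}\colon X_v\to X_v'$, and its commutation with the edge-actions makes $(\chi_v)_{v}$ a natural transformation.

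Finally I would verify that the two composites are naturally isomorphic to the identities. On the one hand $\Xi\Theta(R)$ returns $E_v\big(\bigoplus_u X_u\big)=X_v$, and the restriction of $\iota_w\Phi_e\pi_v$ to $X_v$ is $\Phi_e$, so $\Xi\Theta\cong\id$ on the nose up to the canonical identification. On the other hand $\Theta\Xi(M)=\bigoplus_v E_vM$, and the addition map $\bigoplus_v E_vM\to M$ induced by $\sum_v E_v=1$ is an isomorphism in $\C Q\modpS$, natural in $M$, giving $\Theta\Xi\cong\id$. Most of this last step, as well as the checks of well-definedness above, is the routine bookkeeping of the classical argument.

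The main obstacle --- the one feature absent from the $\mathsf{Vect}$ story --- is reconciling the unitarity demanded of $\pS$-morphisms with the non-unitary operators that unavoidably appear in any path-algebra action: the idempotents $E_v$ act as projections, and non-composable paths act by $0$, neither of which can be a unitary transition map. The resolution, which I would make explicit, is that ``action by $\pS$-morphisms'' must be read as the assertion that only the \emph{edge-restricted} maps $\Phi_e\colon X_v\to X_w$ are unitary $\pS$-morphisms, while the idempotents are realised inside the algebra of $M$ as central idempotents. Here Lemma~\ref{thm:WedderburnArtinConsequence} does the essential work: it guarantees that the block $E_vAE_v$ acting on $E_vH$ is itself a faithful finite-dimensional $*$-representation, so the decomposition $M\cong\bigoplus_v X_v$ genuinely stays inside $\pS$. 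Once this compatibility is secured, the equivalence reduces to the familiar statement that an action of the whole path algebra is the same datum as a functor out of the free category $\mathcal P Q$.
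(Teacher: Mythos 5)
Your proposal is correct and follows essentially the same route as the paper's proof: the functor $\Rep_\pS Q\to\C Q\modpS$ is the direct sum $\bigoplus_{v}X_v$ with paths acting through $\Phi_p$ on the appropriate summand, and the quasi-inverse recovers the vertex objects as $E_v\cdot M$ and the edge morphisms by restricting the action of $e=E_{t(e)}\cdot e\cdot E_{s(e)}$, exactly as in the paper. Your additional remark on reconciling the unitarity of $\pS$-morphisms with the non-unitary idempotents and zero maps is a point the paper leaves implicit in Definition~\ref{def:CQmodpS}, and your reading of it is the intended one.
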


\begin{proof}  We exhibit
two functors that are mutual inverses
$\begin{tikzcd}[ampersand replacement=\&]
\ReppS  Q  \arrow[r,shift left,"F"] \&  \C Q\modpS.  \arrow[l,shift left,"G"] \end{tikzcd}$ \\
\textit{From representations to modules}. Let $R=(X_v ,\Phi_e) _{v\in
  Q_0, e\in Q_1}$ be a $\pS$-representation of $Q$, and \begin{align}
                           \label{FromReptoMods}F(R) :=\oplus_{v\in Q_0} X_v\,.
                          \end{align}
  To give $F(R)$ the structure of module we
take a generator $p=[e_1,
  e_2,\ldots, e_{n-1}, e_{n} ]\in \mathcal P Q $, where each $e_j\in
Q_1$ and extend thereafter by linearity to $\C Q$.  For $x = (x _v
)_{v\in Q_0}\in F(R) $ define $p\cdot x \in F(R)$ to have the only
non-zero component
 \begin{align} \label{rightorder}
(p\cdot x)_{t(p)}   & =  \Phi_{p}  x_{s(p)} =
\Phi_{e_n}\circ \Phi_{ e_{n-1}} \circ \cdots  \circ\Phi_{ e_1} ( x_{s(p)}) \,.
 \end{align}
Equivalently, $p\cdot x_v=0$ unless $p$ starts at $v$, in which case
the only surviving component of $x$ after being acted on by $p$ is
$(p\cdot x)_{w} = 
\Phi_{e_n}\circ \Phi_{ e_{n-1}} \circ \cdots \circ \Phi_{
  e_1}(x_{v})$; here $v=s(e_1) $ and $ w=t(e_n)$. This defines the
functor $F$ on objects---now we define $F$ on a morphism $\Upsilon \in
\hom_{\ReppS(Q)}(R,R')$ by letting $ F(\Upsilon) : F(R)\to F(R')$ be
(adopting notation similar to \eqref{FromReptoMods} for primed objects
too)
 \begin{align} F(\Upsilon) := \bigoplus_{v\in Q_0} \Upsilon_v : \bigg(\bigoplus_{v \in Q_0}  X_v \bigg) \to \bigg( \bigoplus_{v \in Q_0} X_v' \bigg), \quad  (x_v)_{v\in Q_0} \mapsto \big ( \Upsilon_v(x_v) \big)_{v\in Q_0} \label{defFonMorphisms} \, . \end{align}
It remains to verify that $F(\Upsilon)$ is a $\C Q$-module map over
$\pS$. Given $p=[e_1, e_2,\ldots, e_{n-1}, e_{n} ]\in \mathcal P Q $,
the horizontal concatenation of the diagram
\eqref{comm_diag_NatTr_expl} for the edges $e_i$ of $p$, so
$t(e_i)=s(e_{i+1})$ for $i=1,\ldots, n-1$, implies the commutativity
of the next diagram
\begin{align} \label{Fismodulemap}
\begin{tikzcd}[ampersand replacement=\&, column sep=large]
  X_{s(p)} \ar [d,"\Upsilon_{s(p)}"] \ar [r,"\Phi_{e_1}"] \&  X_{t(e_1)}    \ar [d, "\Upsilon_{t(e_1)}"]
  \ar [r,"\Phi_{e_2}"]
  \&   X_{t(e_2)}    \ar [d, "\Upsilon_{t(e_2)}"]  \ar [r,"\Phi_{e_3}"] \& \cdots
   \ar [r,"\Phi_{e_{n-1}}"]  \&  X_{t(e_{n-1})}    \ar [d, "\Upsilon_{t(e_{n-1})}"]   \ar [r,"\Phi_{e_n}"] \&  X_{t(p)} \ar [d, "\Upsilon_{t(p)}"]
  \\
 X_{s(p)}'    \ar [swap,r,"\Phi_{e_1}' " ]\&  X_{t(e_1)}' \ar [swap,r,"\Phi_{e_2}' " ]
  \&   X_{t(e_2)}' \ar [swap,r,"\Phi_{e_3}' " ]   \& \cdots \ar [swap,r,"\Phi_{e_{n-1}}' " ]  \&  X_{t(e_{n-1})}' \ar [swap,r,"\Phi_{e_n}' " ] \&  X_{t(p)}'
\end{tikzcd}
\end{align}
which, together with Eq.
\eqref{rightorder} and the definition \eqref{defFonMorphisms} of $F$
on morphisms,
yields component-wise (that is, vertex-wise) the property
$F(\Upsilon) (p\cdot x) = p \cdot F(\Upsilon) (x)$ for all $ x \in \oplus_{v\in Q_0} X_v = F(R)$.
So $F(\Upsilon)$ is indeed a  module morphism in $F(\Upsilon ) \in \hom_{\C Q \modpS} ( F(R),F(R') )$.
\\

\noindent
\textit{From modules to representations}.  Now take a module $M \in \C
Q\modpS $.  By definition, $M=(A,H)\in \pS$. Since $(A,H)$ bears an
action of $\C Q$, one can let the constant paths $E_v$ act on it to
build a prespectral triple $(A_v,H_v) := E_v \cdot (A,H) $ for each
$v\in Q_0$ as follows.  If $e\in Q$, we prove that the action of $e$
on $(A,H)$ allows a restriction $E_{s(e)} (A,H) \to E_{t(e)} (A,H) $
of multiplication by $e$. For this it is enough to observe that since
$(A,H)$ is a module, the multiplication of the paths $e \cdot
E_{s(e)}=e = E_{t(e)} \cdot e $ holds also `in front of $M=(A,H)$',
namely
\begin{align} \label{contentionXv} 
e \cdot (A_{s(e)},H_{s(e)}) =e  ( E_{s(e)} M  ) =( e  \cdot E_{s(e)} ) M =e M
= (E_{t(e)} \cdot  e ) M = E_{t(e)} \cdot  (e M)\,.
\end{align}
But then  $ e \cdot (A_{s(e)},H_{s(e)})    \subset  E_{t(e)} (A,H)= (A_{t(e)},H_{t(e)})      $,
so we can define $\Phi_e : (A_{s(e)} ,H_{s(e)}) \to (A_{t(e)} ,H_{t(e)})$ as the restriction
of $ m\mapsto  ( e\cdot  m)$ to $(A_{s(e)} ,H_{s(e)})$. We let thus
\begin{align*}
G(A,H)= (X_v,\Phi_e)_{e\in Q_1,v\in Q_0} \text{ or explicitly }  (A_v,H_v)  = E_v \cdot (A,H) , \Phi_e =
( m\mapsto e \cdot m  )\Big|_{ (A_{s(e)},H_{s(e)} ) }\,,
\end{align*}
where each $\Phi_e\in \hom_\pS\big ( (A_{s(e)},H_{s(e)}),(
A_{t(e)},H_{t(e)}) \big)$ is well-defined since $\C Q$ acts on $(A,H)$
by $\pS$-morphisms, by definition of $\C Q\modpS$.  It remains to
verify that a $\C Q\modpS$-morphism $\alpha: (A,H) \to (A',H') $
yields a $\ReppS(Q)$-morphism $G(\alpha): G(A,H) \to G(A',H') $. We
let $G(\alpha)_v: (A_v,H_v) \to (A_v',H_v') $ be the restriction of
$\alpha $ to $ (A_v,H_v)$.  Indeed, its well-definedness follows from
\[
G(\alpha)_v (A_v,H_v)= \alpha (E_v \cdot  (A,H) ) =
E_v \cdot \alpha ( A,H)   \subset E_v (A',H') =(A_v',H_v').\]
Finally, we verify that the family $\{G(\alpha)_v\}_{v\in Q_0} $
is indeed a map of representations
\begin{align}\label{diagrm}
\begin{tikzcd}[ampersand replacement=\&]
  (A_v,H_v)  \ar [d,swap, "G(\alpha)_v"] \ar [r, "\Phi_e"] \&  (A_w,H_w)  \ar [d, "G(\alpha)_w"] \\
  (A_v',H_v')  \ar [r,swap, "\Phi_e' "] \& (A_w',H_w')
\end{tikzcd}\end{align}
That this diagram commutes follows from (starting from the right-down composition)
\begin{align*}
G(\alpha)_w \circ \Phi_e  (a_v,\psi_v) &= G(\alpha)_w \big(e\cdot (a_v,\psi_v)\big) \qquad (a_v,\psi_v)\in (A_v,H_v) \\
\notag & = \alpha\big(e\cdot (a_v,\psi_v)\big) \\
\notag &= e\cdot   \alpha  (a_v,\psi_v)\\
\notag &= e\cdot  G( \alpha)_v  (a_v,\psi_v) \\
\notag &= \Phi'_e  \circ  \alpha_v  (a_v,\psi_v)\,.
\end{align*}
The first line is by definition of $\Phi_e$. For the second observe
that the relation \eqref{contentionXv} implies $e\cdot (a_v,\psi_v)\in
X_w$, where $G(\alpha)_w$ is by definition $\alpha$. The third
equality holds since $\alpha$ is a module morphism, and fourth and
fifth equalities follow by the same token as the second one and the
first one did, respectively.  As every step in the construction of $F$
and $G$ is explicit above, it is routine to check that the
compositions of $F$ (essentially, taking direct sum over vertices) and
$G$ (restriction to a vertex $v$ via the constant path $E_v$) are
naturally equivalent to the identities, $G\circ F \simeq \mathbf
1_{\ReppS(Q)}$ and $F\circ G\simeq \mathbf 1_{\C Q\modpS}$.\end{proof}

 \subsection{Combinatorial description of $\Rep (Q)$} \label{sec:combinatorialRepQ}
In order to classify representations, we introduce a combinatorial object.
\begin{definition}\label{def:Bratteli_sieć}
A \textit{Bratteli network} over $Q$ is a collection of:
 \begin{itemize}
 \itemb some positive integer $l_v$ per vertex $v$,\vspace{1ex} \itemb
 $l_v$-tuples $\bn_v,\br_v \in \Z_{>0} ^{l_v}$ for each
 vertex, \vspace{1ex} \itemb for all $e\in Q_1$, $C_e \in M_{l_{s(e)}
   \times l_{t(e)}} ( \Z_{\geq 0}) $ is compatible with the previous
 maps in the sense that: \begin{align*}\bn_{t(e)}= C\trans_e
   \bn_{s(e)} \quad\text{and}\quad\br_{s(e)}= C_e
   \br_{t(e)}\,. \end{align*}
 \end{itemize}
 Although due to the last condition the integer tuples are not
 arbitrary, we denote Bratteli networks by ($\bn_Q,\br_Q$), leaving
 $C$ implicit, and sometimes the quiver too, thus writing only
 $(\bn,\br)$.
\end{definition}

\noindent
The compatibility conditions on $C_e$ are an element of the
construction of a module for the algebra $\C Q$, in the sense of the
\textsc{rhs} of the equivalence of categories of Proposition
\ref{thm:Eq_Cats}. Let
\begin{align*}C (p):=
(C_{e_{1} }  C_{e_2} \cdots  C_{e_{k-1 } }C_{e_k})
  \text{ for } p=[e_1,e_2,\ldots,  e_{k}]\,.
\end{align*}
Notice the `wrong order' ($C\trans$ then satisfies a similar condition
in the right order).  Thus for any path $p$ in $Q$, the labels
assigned to the vertices satisfy
\begin{align} \label{both-way}
\bigg( \begin{matrix}
\bn_{t(p)}  \\[.3ex] \mathbf r_{s(p)}
\end{matrix} \bigg) =  \bigg( \begin{matrix}
C_p\trans &  0_{l_s}  \\[.3ex]
0_{l_t} & C_p
\end{matrix} \bigg)
\bigg( \begin{matrix}
\mathbf n_{s(p)}       \\[.3ex]
\mathbf r_{t(p)}
\end{matrix} \bigg) \,.\end{align}

\noindent
For a connected quiver $Q$ and for fixed $N\in \Z_{>0}$ we define the \textit{restricted representation space}
\begin{align} \label{rep_N}
\Rep^N_{\pS}(Q) :=\{ R\in  \ReppS(Q)  : \dim H_v = N  \text{ for some }v\in Q_0\}\,,
\end{align}
for whose size we estimate an upper-bound
in Appendix \ref{app:boundRepQN}. Observe that
$\dim H_v=\dim H_w$ for any two vertices $v,w$
of a connected quiver. Although exact counting should be
possible, we only need later the fact that this new space is finite-dimensional
(in fact, the finiteness of Bratteli networks of
the restricted representation spaces is what is essential). Recall first, that the underlying graph $\Gamma Q$ of a quiver $Q$
has the same vertices as $Q$ and keeps all edges after forgetting orientations, e.g.
\begin{align*}
\Gamma \big ( \runterhalb{\includegraphics{Pi1}} \big) =  \runterhalb{\includegraphics{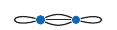}}  \end{align*}

\begin{figure}
 \quad \subfloat[Bratteli network from $\Rep^{N=5}_\pS(Q)$. Here all
   hidden multiplicities satisfy $\br_x=(1,1,1),
   \br_v=(1,1)=\br_z,\br_y=1$. \label{fig:Bratteli_siećN}]
       {\includegraphics[width=5.1985cm]{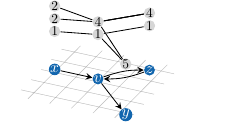}} \quad
       \subfloat[A non-example of Bratteli network on $Q$, since at
         the edge from $z$ to $v$ we do not have a $*$-algebra
         map \label{fig:Bratteli_sieć}]{
         \includegraphics[width=5.1985cm]{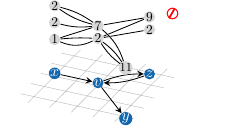} }
 \caption{Bratteli networks from $\ReppS(Q)$ with light gray
   vertices over a quiver $Q$ in blue vertices.
 Any path  in $Q$ should lift to a sequence of Bratteli diagrams. This constrains e.g. the
   edges between $v$ and $z$ to have a symmetric Bratteli diagram (thus the identity).}
 \end{figure}


With the
notation $|\mathbf t|=\#\{j : t_j >0 \}$ for a given $\mathbf t\in
\Z^\infty_{\geq 0},$ one has:

\begin{lemma} \label{thm:RepQis...}
 For $\mathbf n\in \Z^\infty_{\geq 0}$ with finite $|\mathbf n|$, we abbreviate  $\uni(\mathbf n):=\textstyle\prod_{j=1}^{|\mathbf n|} \uni(n_j)$. One has
\begin{align}\Rep_\pS(Q) & =
\coprodsub {\text{Bratteli}\\ \text{networks} \\ (\bn_Q, \br_Q ) } \bigg\{
\prod_{e\in Q_1}\uni\big(\mathbf n_{t(e)} \big)  \label{ReppSasdisjointsums}
\bigg\}
\end{align}
where the disjoint union is over all integers $(\bn_v,\br_v)_{v\in Q_0} $  that yield  Bratteli networks over $Q$.\end{lemma}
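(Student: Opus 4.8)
The plan is to read off a representation of $Q$ in $\pS$ as \emph{vertex data} plus \emph{edge data}, and then to recognise the two layers as exactly the combinatorial index (a Bratteli network) and the continuous fibre (a product of unitary groups) appearing in \eqref{ReppSasdisjointsums}. The starting observation is that $\mathcal P Q$ is the \emph{free} category on $Q$: a functor $\mathcal P Q\to\pS$ is determined, with no further relations, by a choice of object $X_v\in\pS$ for every $v\in Q_0$ together with a choice of morphism $\Phi_e\in\hom_\pS(X_{s(e)},X_{t(e)})$ for every $e\in Q_1$, the values $\Phi_p$ on longer paths being forced by functoriality. Hence, as sets,
\begin{align*}
\Rep_\pS(Q)=\coprodsub{(\bn_v,\br_v)_{v\in Q_0}}\ \prod_{e\in Q_1}\hom_\pS\big(X_{s(e)},X_{t(e)}\big),
\end{align*}
where, by Lemma \ref{thm:WedderburnArtinConsequence}, the outer disjoint union ranges over all assignments of Artin--Wedderburn tuples $(\bn_v,\br_v)\in\Z_{>0}^{l_v}\times\Z_{>0}^{l_v}$ to the vertices; these parametrise the objects $X_v$ uniquely up to reordering of summands, an ambiguity I resolve once and for all by a fixed ordering convention (cf. Notation \ref{notation_BrattsUnitarities}).

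Next I would substitute Proposition \ref{thm:reductionU} into each factor. For a fixed vertex labelling it gives
\begin{align*}
\hom_\pS\big(X_{s(e)},X_{t(e)}\big)\ \simeq\ \coprodsub{\Brat_e:(\bn_{s(e)},\br_{s(e)})\to(\bn_{t(e)},\br_{t(e)})}\uni(\bn_{t(e)}),
\end{align*}
where, by Proposition \ref{thm:characHomFerm}, the summation runs over biadjacency matrices $C_e$ subject to $\bn_{t(e)}=C_e\trans\bn_{s(e)}$ and $\br_{s(e)}=C_e\br_{t(e)}$. These are \emph{precisely} the edge-compatibility conditions of Definition \ref{def:Bratteli_sieć}, so the datum $\big((l_v,\bn_v,\br_v)_{v},(C_e)_{e}\big)$ is nothing but a Bratteli network over $Q$, while the fibre over each edge is the target-algebra unitary group $\uni(\bn_{t(e)})$.

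It then remains to interchange products and coproducts. Using the distributive law $\prod_{e}\coprod_{\Brat_e}\uni(\bn_{t(e)})=\coprod_{(\Brat_e)_e}\prod_{e}\uni(\bn_{t(e)})$ and merging the resulting index with the outer disjoint union over vertex labellings, the pair (vertex tuples, compatible biadjacency matrices) becomes exactly the indexing set of Bratteli networks, yielding \eqref{ReppSasdisjointsums}. The step I expect to carry the real content—rather than routine categorical bookkeeping—is this last reindexing, precisely because of the empty-hom-set phenomenon discussed in Section \ref{sec:Preliminarywork}: for a vertex labelling admitting no compatible $C_e$ on some edge the inner coproduct is empty, the whole product over edges collapses, and that labelling contributes nothing. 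The Bratteli-network formulation absorbs this automatically, since such labellings simply fail to be Bratteli networks, which is exactly why \eqref{ReppSasdisjointsums} is the \emph{non-redundant} list promised in the introduction and not the over-counting expression \eqref{redundantlabelsRepQ}. I would close by remarking that the edge-level compatibility propagates to arbitrary paths through \eqref{both-way}, in agreement with functoriality, so that restricting attention to edges loses no information.
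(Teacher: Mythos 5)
Your proposal is correct and follows essentially the same route as the paper: express $\Rep_\pS(Q)$ as a disjoint union over vertex labellings of products of edge hom-sets, decompose each hom-set via Proposition \ref{thm:reductionU} into a coproduct over compatible Bratteli diagrams of unitary groups, and reindex so that labellings with an empty hom-set drop out and the surviving index set is exactly that of Bratteli networks. The only cosmetic difference is that the paper states the compatibility condition $\lozenge$ for all paths at once while you impose it edge-by-edge and note afterwards that it propagates along paths via \eqref{both-way}; the two are equivalent since the biadjacency matrices compose.
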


\begin{proof}

 By definition,  the space of all quiver representations will
count all morphisms $X\sour \to X\targ$ along all edges $e$, over all
labelings $Q_0 \ni v\mapsto X_v \in \pS$ that are compatible along all
paths. The latter means to remove from the next space
\begin{align}\label{toobigRepQ}
\coprodsub{Q_0\to \pS  \\ \quad v\mapsto X_v\,\,} \bigg\{   \prod_{e\in Q_1}
\hom_\pS ( X_{s(e)}, X_{t(e)}) \bigg\}\,
\end{align}all those vertex-labelings for which
there exist an edge $e_0\in Q_1$ such that $\hom_\pS ( X_{s(e_0)},
X_{t(e_0)}) =\emptyset $ (cf. Example \ref{ex:addedV2}, tailored at
showing why we can exclude said maps $X: Q_0\to \pS$).  This happens
for instance if there exist no Bratteli diagrams $A_{s(e)} \to
A_{t(e)}$, or if the Hilbert spaces they act on are not isomorphic; in
general, such situations are avoided by imposing that the labels of
the vertices satisfy Eq.~\eqref{both-way} along any path $p$ in $Q$.
Let us use a combinatorial description of the prespectral triples and
denote by `\text{\scriptsize $\lozenge$}' the next condition on labels
$Q_0 \to \Z_{\geq 0}^\infty\times\Z_{\geq 0}^\infty,\, v \mapsto
(\bn_v,\br_v)$:
  \begin{align*}
  \lozenge = \Big\{ & |\bn_v| =|\br_v|=:l_v<\infty \text{ and for each
    path $p\in \mathcal P Q$, there exist a matrix } \\[.5ex] &C_p\in
  M_{l_{s(p)} \times l_{t(p)} } (\Z_{ \geq 0}) \text{ such that }
  \bn_{t(p)} = C_p\trans \mathbf n_{s(p)} \text{ and } \br_{s(p)} =
  C_p \mathbf r_{t(p)} \Big\}\,. \end{align*} Aided by Proposition
  \ref{thm:reductionU}, we obtain
\begin{align*}\nonumber
\Rep_\pS(Q) & =\coprodsub {Q_0 \to \Z_{\geq 0}^\infty\times\Z_{\geq
     0}^\infty  \\[.3ex] v \mapsto (\bn_v,\br_v) \,\, }^ \lozenge
\bigg\{ \prod_{e\in Q_1} \hom_\pS ( X_{s(e) }(
 \bn_{s(e)}, \br_{s(e)} ) , X_{t(e) }( \bn_{t(e)}, \br_{t(e)} ) )
\bigg\}
\\ & = \coprodsub {Q_0 \to \Z_{\geq 0}^\infty\times\Z_{\geq
    0}^\infty \\[.3ex] v \mapsto (\bn_v,\br_v) \,\,}^\lozenge \bigg\{ \prod_{e\in Q_1} \,\,
\coprodsub{(\bn_{s(e)},\br_{s(e)}) \stackrel{\Brat}{\to}
  (\bn_{t(e)},\br_{t(e)}) } \Big[\textstyle\prod_{i=1}^{|\mathbf
    n_{t(e)}|} \uni\big( n_{t(e),i} \big) \Big] \bigg\}\,.
\end{align*}
But this can be rephrased in terms of Bratteli networks as in the
claim.
\end{proof}
\noindent
\begin{minipage}{.734\textwidth}
\begin{example}[Bratteli networks exclude spurious labels $Q_0\to \pS$]\label{ex:addedV2}
To see which maps $X$ in \eqref{toobigRepQ} do not contribute to
$\ReppS (Q)$, which we call them \textit{spurious} and tag with a
`$\diamond$', consider the quiver $Q$ drawn on the plane on the right.
The hidden Hilbert space the matrix algebras (lightgray) act on are:
$3 \C^3$ acted upon by $\M 3$ and else $\C^n$ acted upon by $\M n$.
Although the black arrows upstairs are all legal, this choice of
algebras does not lead to a representation of $Q$, since the edge
$e_3=(z,v)$ can never be lifted, regardless of how we redefine the
Hilbert spaces.
\end{example}
\end{minipage}
~
\begin{minipage}{.2\textwidth}
\[\includegraphics[width=5.5cm]{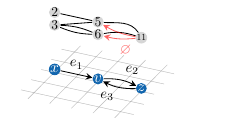}\]
\end{minipage}\\[.3ex]%
\noindent This picture defines a spurious map $X^\diamond:Q_0\to \pS$,
i.e. whose contribution to $\ReppS (Q)$ is empty, as
\[
\prod_{e\in Q_1} \hom_\pS ( X_{s(e) }^\diamond, X_{t(e) }^\diamond) =
     [\uni(5)\times\uni(6)]_{v=t(e_1)}\times \uni(11)_{z=t(e_2)}
     \times \emptyset_{v=t(e_3)} =\emptyset\,. \] Bratteli networks
     are the combinatorial data behind all those labels $X:Q_0\to \pS$
     that, unlike this $X^\diamond$, do contribute to $\ReppS (Q)$.
     Bratteli networks thus already excluded all spurious labels.
 \subsection{The gauge group}

\begin{definition}[Equivalence of representations]\label{def:equiv_of_reps}
Two $\pS$-representations $R=(X_v ,\Phi_e) _{v\in Q_0, e\in Q_1}$ and
$R'=(X_v' ,\Phi_e') _{v\in Q_0, e\in Q_1}$ of $Q$ are
\textit{equivalent} if there exist a family $\{\Upsilon_v: X_v\to
X'_v\}_{v\in Q_0}$ of invertible $\pS$-morphisms, such that for any
path $p\in \mathcal P Q$ the leftmost diagram of Figure
\ref{fig:NatTrans} commutes. This boils down to the existence of an
invertible natural transformation $\Upsilon: R \to R'$ (we then write
$R\simeq R'$).
\end{definition}
We aim at determining $\ReppS(Q)$ modulo equivalence, which we wish to
represent by quotienting by a group action, $\ReppS(Q) / \mathcal
G_\pS(Q) $.  This group $ \mathcal G_\pS(Q) $, or just $ \mathcal G
(Q) $, is defined next:

\begin{definition} The \textit{gauge group} $ \mathcal G(Q)$  of a quiver
reads $\mathcal G(Q):=\coprod_{R/\simeq } \Aut_{\ReppS(Q)}(R)$.
 \label{def:GaugeGroup}
\end{definition}
\vspace{-2ex}
\begin{figure}[htb!]
\begin{align*}\notag
{
\begin{tikzcd}[ampersand replacement=\&]
  X_{s(p)}  \ar [d,swap, "\Upsilon_{s(p)}"] \ar [r, "\Phi_p"] \&  X_{t(p)}  \ar [d, "\Upsilon_{t(p)}"] \\
  X_{s(p)}'  \ar [r,swap, "\Phi'_p "] \& X_{t(p)}'
\end{tikzcd}}
\runter{\includegraphics[width=.39\textwidth]{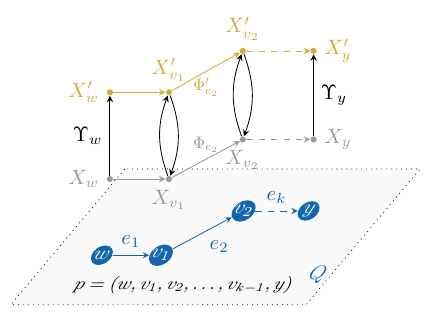}} \hspace{-3ex}
\raisebox{-.506\height}{\includegraphics[width=.39\textwidth]{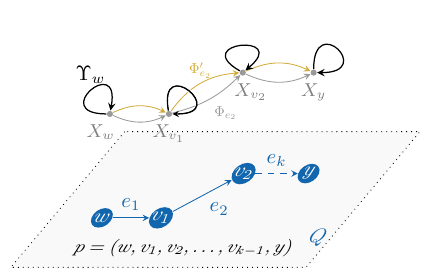}}
\end{align*}\vspace{-3ex}
 \caption{\label{fig:NatTrans}\textit{Left:} The commutative diagram
   defining invertible natural transformation $R=(X,\Phi)\to
   R'=(X',\Phi')$. \textit{Middle:} A path on a quiver $Q$ is shown,
   and two lifts $\Phi'_p = \Phi_{e_k}' \circ \cdots \circ \Phi_{e_2}'
   \circ \Phi_{e_1}' $ (uppermost) and $\Phi_p=\Phi_{e_k} \circ \cdots
   \circ \Phi_{e_2} \circ \Phi_{e_1}$ (lower). As vertical morphisms
   are invertible, the diagram on the left commutes for any path $p$
   if and only if it does so for any edge $e$, for intermediate curved
   arrows cancel out after concatenation of single-edge
   diagrams. \textit{Right:} The (as explained below w.l.o.g.) reduced
   case of the two representations $R, R'$ coinciding in objects,
   $X_v=X_v'$ for all $v\in Q_0$. Black arrows represent the
   parameters $\Upsilon_v\in \Aut_{\pS}(X_v)$ of the gauge group
   $\mathcal{G}(Q)$ (given a Bratteli network). }
\end{figure}
 \begin{lemma} \label{thm:GaugeGroupThm}
 The gauge group is well-defined. Leaving the target category
 implicit, it in fact reads
\begin{align}\label{gauge_groupofQ}
\mathcal{G}(Q) =
\coprodsub {\text{Bratteli}\\ \text{networks} \\ (\bn_Q, \br_Q ) } \prod_{v\in
  Q_0 } \Aut_{\pS} (X_v (\bn_Q,\br_Q) )\,. \end{align}
 \end{lemma}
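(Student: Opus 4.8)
The plan is to prove the two assertions separately: that $\coprod_{R/\simeq}\Aut_{\ReppS(Q)}(R)$ is independent of the chosen representatives, and that it is organized by Bratteli networks into the product form \eqref{gauge_groupofQ}. Throughout I would read the gauge group through the action announced just before Definition~\ref{def:GaugeGroup}, namely as the group of vertical families $\{\Upsilon_v\}_{v\in Q_0}$ in the right panel of Figure~\ref{fig:NatTrans} acting on $\ReppS(Q)$, whose orbits are the equivalence classes $R/\simeq$.

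\emph{Well-definedness.} First I would invoke the standard groupoid fact that isomorphic objects have conjugate automorphism groups: an invertible natural transformation $\Psi\colon R\to R'$ induces the group isomorphism $\Upsilon\mapsto\Psi\circ\Upsilon\circ\Psi^{-1}$ from $\Aut_{\ReppS(Q)}(R)$ onto $\Aut_{\ReppS(Q)}(R')$. Hence every summand indexed by a class $R/\simeq$ is determined, up to canonical isomorphism, by the class alone, so the disjoint union does not depend on the representatives. This already makes $\mathcal G(Q)$ well-defined as a group bundle over the set of equivalence classes.

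\emph{Reduction to a fixed object-assignment.} Next I would normalize objects. By Lemma~\ref{thm:WedderburnArtinConsequence} each $X_v$ is isomorphic to a standard triple $X_v(\bn_v,\br_v)$, and since a $\pS$-isomorphism preserves the pair $(\bn_v,\br_v)$ up to reordering, any invertible $\Upsilon\colon R\to R'$ forces $X_v\cong X_v'$ vertex by vertex. Identifying $X_v'$ with $X_v$ puts us in the reduced situation of the right panel of Figure~\ref{fig:NatTrans}, where $R$ and $R'$ share their objects and $\Upsilon_v\in\Aut_\pS(X_v)$. After this normalization the objects of a representation are encoded by a vertex map $v\mapsto(\bn_v,\br_v)$, and the mere existence of the edge morphisms $\Phi_e\in\hom_\pS(X_{s(e)},X_{t(e)})$ forces, through Proposition~\ref{thm:characHomFerm}, the relations $\bn_{t(e)}=C_e\trans\bn_{s(e)}$ and $\br_{s(e)}=C_e\br_{t(e)}$ on every edge; these are exactly the defining conditions of a Bratteli network. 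Conversely, Lemma~\ref{thm:RepQis...} shows every Bratteli network is realized. This is also the step that discards the spurious labels of Example~\ref{ex:addedV2}: an object-assignment that is not part of a Bratteli network produces an empty edge-product and hence no representation, so it indexes no class, which is precisely what distinguishes \eqref{gauge_groupofQ} from the oversized \eqref{redundantlabelsGauge}.

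\emph{Identification of the fibre and assembly.} Finally I would identify the gauge transformations available over a fixed Bratteli network. With objects standardized, a vertical family is exactly a choice of $\Upsilon_v\in\Aut_\pS(X_v)$ per vertex, and every such family is an admissible gauge transformation, its effect on $R$ being to conjugate each $\Phi_e$ into $\Upsilon_{t(e)}\Phi_e\Upsilon_{s(e)}^{-1}$, again a $\pS$-morphism for the same objects. Thus the fibre over a Bratteli network is the full product $\prod_{v\in Q_0}\Aut_\pS(X_v(\bn_Q,\br_Q))$, and summing over all Bratteli networks yields \eqref{gauge_groupofQ}. The main obstacle I anticipate is the bookkeeping at this last step: one must combine the reordering ambiguity of $(\bn_v,\br_v)$ from Lemma~\ref{thm:WedderburnArtinConsequence} with the permutation part $\Sym(\bn_v,\br_v)$ of $\Aut_\pS(X_v)$ recorded in Corollary~\ref{coro:Automorphisms}, so that Bratteli networks differing only by a simultaneous relabelling of summands are not counted as distinct classes, and to confirm that the per-vertex groups assemble with the correct structure alluded to by the parenthetical remark on the group law in Corollary~\ref{coro:Automorphisms}.
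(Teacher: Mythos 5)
Your proposal is correct and follows essentially the same route as the paper's proof: well-definedness via conjugation by the invertible natural transformation, reduction to a common object-assignment $X=X'$ (with the permutation ambiguity absorbed into $\Sym(\bn_v,\br_v)$ from Corollary~\ref{coro:Automorphisms}), and identification of the fibre over each Bratteli network with $\prod_{v\in Q_0}\Aut_\pS(X_v)$ while spurious labels are excluded. Your write-up is somewhat more explicit than the paper's (e.g.\ in checking that conjugating $\Phi_e$ by an arbitrary vertex-wise family stays inside the hom-sets, so the fibre is the \emph{full} product), but the argument is the same.
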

 \begin{proof}
 If $R'\simeq R$ then there is an invertible natural transformation
 $\alpha: R\to R'$ and $\Aut_{\ReppS(Q)}(R') $ is isomorphic to
 $\Aut_{\ReppS(Q)}(R) $ via conjugation by $\alpha$, so the choice of
 representing element $R$ occurring in $ \coprod_{R/\simeq }
 \Aut_{\ReppS(Q)}(R)$ does not alter $\mathcal G(Q)$.  \par Observe
 first that if two representations satisfy $R=(X,\Phi)\simeq
 R'=(X',\Phi') $, the invertibility of the natural transformation
 implies that the objects $X_v$ and $X'_v$ for each vertex $v$ underly
 the same Bratteli network data up to a permutation of the Bratteli
 network parameters.  W.l.o.g. we can choose then $X=X'$ (should this
 not be the case, if $X \neq X'$ but still $R\simeq R'$, that group
 results in translating by permutations of the Bratteli parameters
 appearing in Corollary \ref{coro:Automorphisms}; pictorially our
 natural transformations look like in the right panel of Figure
 \ref{fig:NatTrans} instead of like the middle panel there). This
 means, these natural transformations are determined by a Bratteli
 network and by the automorphism groups at the vertices as in
 \eqref{gauge_groupofQ}.
 \end{proof}

From the description given in Corollary \ref{coro:Automorphisms}
(therein only the underling set), we can parametrise gauge
transformations by
\begin{align} \label{sigma_g}
(\sigma, g) = \{(\sigma_v,g_v): \sigma_v \in \Sym( \bn_v,\br_v)  \text{ and }  g_v \in   \uni(\bn_v ) \}_{v\in Q_0}
\end{align}
This notation will be useful in the proof of the next result, which is
partially based on that of \cite[Prop. 13]{MvS}, but which has
a fundamentally different output (cf.
Rem.~\ref{rem:GaugeGroups_differences}).
 \begin{lemma}[The gauge group action $\mathcal{G}(Q) \curvearrowright \Rep_\pS(Q) $]
 \label{lem:GaugegroupAction}
Let $R=(X,\Phi) \in \ReppS(Q)$ and let $\mathcal{G}_v$ denote the automorphisms
of  the prespectral triple  at  $v\in Q_0$, $\mathcal G_v:=\Aut_\pS( X_v)$.
If $X_v$ has parameters $ (\bn_v,\br_v)$ as described by Lemma
\ref{thm:WedderburnArtinConsequence},  then
 \begin{align} \label{gauge_semidirectprod}
\mathcal{G}_v =\Sym( \bn_v,\br_v)\ltimes \uni(\bn_v) \quad v\in Q_0\,,
\end{align}
in terms of which the gauge group of the quiver reads
\begin{align}\label{gauge_groupofQ}
\mathcal{G}(Q):=
\coprodsub {\text{Bratteli}\\ \text{networks} \\ (\bn_Q, \br_Q ) } \prod_{v\in
  Q_0 } \mathcal{G}_v \,. \end{align}
It acts on the space of representations as
follows. We parametrise the morphisms
$\Phi=(\Brat, U)$  as in \eqref{ParametrisationbyBratteliUnit}
and call $\Phi'=(\Brat',U')$ the morphism acted on  by $(\sigma, g)$ as in \eqref{sigma_g}. If we let
\begin{align*}
(\Brat'_e,U'_e) &:=\big(  (\Brat,U )^{(\sigma, g)}  \big)_e, \qquad  (\sigma,g ) \in
\mathcal{G}\,, \text{ for each $e\in Q_1$,}
\end{align*}
 then the transformed Bratteli diagrams and unitarities are given by
\begin{align} \Brat'_e = \sigma\targ \circ \Brat_e \circ \sigma\inv\sour   ,\qquad \text{ and }\qquad
U_e' =g\targ \cdot \sigma\targ ( U_e) \cdot  \sigma\targ \big( \Brat_e (g\inv\sour) \big)\,. \label{gauge_Action_formeln}
\end{align}
The dot is the product in $\mathcal U (A_{t(e)}) $, and $\circ$ composition,  of course.
 \end{lemma}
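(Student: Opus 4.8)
The plan is to prove the three assertions in turn: the semidirect-product description \eqref{gauge_semidirectprod} of the vertex-local automorphism group, the global form \eqref{gauge_groupofQ}, and finally the action formulas \eqref{gauge_Action_formeln}. Of these only the first and third carry genuinely new content, since \eqref{gauge_groupofQ} is the set-level statement of Lemma~\ref{thm:GaugeGroupThm} read through the first assertion (recall $\mathcal G_v=\Aut_\pS(X_v)$ by definition).

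First I would upgrade Corollary~\ref{coro:Automorphisms} from an equality of sets to an isomorphism of groups. That corollary already exhibits the underlying set of $\Aut_\pS(X_v)$ as $\coprod_{\sigma\in\Sym(\bn_v,\br_v)}\uni(\bn_v)$, the permutation $\sigma$ being exactly the permutation biadjacency matrix of the only compatible Bratteli diagrams $X_v\to X_v$, and the unitary factor being the reduced datum of Proposition~\ref{thm:reductionU}. Writing such an automorphism at the algebra level as $\Ad(g_v)\circ\sigma_v$ (conjugation after the block permutation $\sigma_v$), I would compose two of them and use the intertwining identity $\sigma\circ\Ad(x)=\Ad(\sigma(x))\circ\sigma$; this gives $(\sigma_1,g_1)(\sigma_2,g_2)=(\sigma_1\sigma_2,\,g_1\cdot\sigma_1(g_2))$, i.e. precisely $\Sym(\bn_v,\br_v)\ltimes\uni(\bn_v)$ with $\Sym$ permuting the factors of $\uni(\bn_v)=\prod_j\uni(n_{v,j})$ (legal because $\sigma_v$ preserves the sizes $n_{v,j}$). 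Substituting this description of each $\mathcal G_v$ into Lemma~\ref{thm:GaugeGroupThm} yields \eqref{gauge_groupofQ}.

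For the action I would first invoke Lemma~\ref{thm:GaugeGroupThm} to reduce to the case $X=X'$ (the right panel of Figure~\ref{fig:NatTrans}), so that a gauge transformation is a family $\{\Upsilon_v=(\sigma_v,g_v)\}_{v\in Q_0}$ as in \eqref{sigma_g}. Invertibility of the $\Upsilon_v$ makes the naturality square \eqref{comm_diag_NatTr_expl} equivalent to $\Phi'_e=\Upsilon\targ\circ\Phi_e\circ\Upsilon\sour\inv$ for every edge $e$. Writing $\Phi_e$ through the parametrisation \eqref{ParametrisationbyBratteliUnit} as $\Ad(U_e)\circ\Brat_e$ at the algebra level and each $\Upsilon_v$ as $\Ad(g_v)\circ\sigma_v$, I would substitute and then push all conjugations to the left using $\sigma\circ\Ad(x)=\Ad(\sigma(x))\circ\sigma$ and $\Brat\circ\Ad(x)=\Ad(\Brat(x))\circ\Brat$ (the latter holding because $\Brat$ is a unital $*$-homomorphism), together with $\Ad(x)\Ad(y)=\Ad(xy)$. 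Collecting the pure block-permutation part leaves $\Brat'_e=\sigma\targ\circ\Brat_e\circ\sigma\sour\inv$, and collecting the accumulated conjugation gives $U'_e=g\targ\cdot\sigma\targ(U_e)\cdot\sigma\targ(\Brat_e(g\sour\inv))$, which is \eqref{gauge_Action_formeln}.

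The step demanding the most care is this last bookkeeping. One must track the order in which $\sigma\sour$ and $g\sour$ enter the inverse source automorphism, since in the semidirect product $(\sigma,g)\inv=(\sigma\inv,\sigma\inv(g\inv))$, so the symbol $g\sour\inv$ appearing inside $\Brat_e(\,\cdot\,)$ is to be read as the unitary component of $\Upsilon\sour\inv$; verifying that transporting it through $\Brat_e$ and then $\sigma\targ$ reproduces exactly the claimed placement of the conjugating unitaries is the crux of the computation. Two consistency checks then remain: that the output $(\Brat'_e,U'_e)$ is independent of the chosen $\sim$-representatives of $\Phi_e$ and $\Upsilon_v$, which follows because composition descends to $\hom_\pS$; and that the residual $\uni(\br)$ and central $\uni(1)$ freedom of Proposition~\ref{thm:reductionU} does not affect the adjoint action by $U'_e$, so that \eqref{gauge_Action_formeln} is well defined in $\uni(\bn\targ)$ up to its centre.
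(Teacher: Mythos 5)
Your proposal is correct and follows essentially the same route as the paper: both hinge on writing $\Phi_e'=\Upsilon_{t(e)}\circ\Phi_e\circ\Upsilon_{s(e)}^{-1}$ with $\Upsilon_v=\Adj(g_v)\circ\sigma_v$ and pushing the adjoints leftward via $\sigma\circ\Adj(x)=\Adj(\sigma(x))\circ\sigma$ and $\Brat\circ\Adj(x)=\Adj(\Brat(x))\circ\Brat$ to read off \eqref{gauge_Action_formeln}. The only cosmetic difference is the order: you obtain the semidirect product \eqref{gauge_semidirectprod} first by composing vertex automorphisms directly, whereas the paper extracts the same composition law $(\tau,h)(\sigma,g)=(\tau\sigma,\,h\cdot\tau(g))$ afterwards by gauge-transforming twice at the target.
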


 \begin{proof} Given any $e\in Q_1$,
let  $w=s(e)$ and $y=t(e)$.
Since we are looking for
automorphisms, we set $X_w'=X_w$ and $X_y'=X_y$  in the leftmost diagram in Figure \ref{fig:NatTrans} for the path $p=e$.
 Assuming $(\Upsilon_{v})_{v\in Q_0}\in\mathcal{G}(Q)$
one gets the gauge transformation rule
\begin{align}\Phi_e'= \Upsilon_y \circ \Phi_e\circ \Upsilon_w\inv\,  \qquad
\text{ so } \qquad \phi_e'= \phi_y\circ \phi_e \circ \phi_w\inv
\,, \label{gauge_Phi_transform}\end{align} where $ \Phi_e=(\phi_e,L_e)
$ and $\Upsilon_v=(\phi_v,L_v)$.  If we instead use the
parametrisation \eqref{sigma_g} for the gauge transformations,
$(\Upsilon_{ v})_{v\in Q_0}=(\sigma_{v}, g_v)_{v\in Q_0}$, where
$\sigma_v\in \Sym(\bn_v,\br_v)$ is the Bratteli (permutation) diagram
and $g_v\in \uni(\bn_v)$ for each vertex $v$, we get for $a_w\in A_w$,
\begin{align} \nonumber
 \phi_e'(a_w) & = (\Adj {g_y} \circ\, \sigma_y) \circ ( \Adj U_e \circ
 \Brat_e) \circ (\Adj g_w\inv \circ \sigma_w\inv) (a_w) \\\nonumber &=
 (\Adj {g_y} \circ\, \sigma_y) \circ  \Adj [U_e \cdot \Brat_e
   (g_w\inv ) ] (\Brat_e\circ \sigma_w\inv) (a_w) \\ &= \Adj \big\{
 g_y \cdot \sigma_y [U_e \cdot \Brat_e (g_w\inv ) ]\big\} ( \sigma_y
 \circ \Brat_e\circ \sigma_w\inv) (a_w)
 \label{gauge_formula_trans}
\end{align}
where one uses that $\sigma_w, \Brat_e$ and $ \sigma_y$
are $*$-algebra morphism; in particular,
$\Brat_e \Adj g_w\inv = \Adj [\Brat_e ( g\inv_w)]$
and similar relations satisfied by the permutations $\sigma_w$ and $\sigma_y$.
Since the \textsc{rhs} of Eq.~\eqref{gauge_formula_trans}
must be of the form $\Adj U'_e \circ \Brat_e' (a_w)$, one can uniquely read off
the transformation rule \eqref{gauge_Action_formeln}.

To derive \eqref{gauge_semidirectprod}, we gauge transform twice as
$\Phi_e\stackrel{\Upsilon_y }\mapsto \Phi_e\stackrel{\Xi_y }{\mapsto}
\Phi''_e$ at the target $y$ by $\Upsilon_y=(\sigma_y,g_y),
\Xi_y=(\tau_y,h_y) \in\Sym( \bn_y,\br_y)\times \uni(\bn_y)$.  From
\eqref{gauge_formula_trans} it follows that $\Brat''_e=\tau_y \circ
\sigma_y\circ \Brat_e$ as well as $U_e''=h_y\cdot \tau_y(g_y) \cdot
      [(\tau_y\circ \sigma_y) (U_e) ]$, which is described by the
      semidirect product.  The action of $\Upsilon$ and $\Xi$ at the
      source is obtained in an analogous way. \end{proof}

\begin{remark}\label{rem:GaugeGroups_differences}
Let $\mathcal{C}$ denote either $\pS$ or $\mathcal{S}_0$
(cf. Rem.~\ref{rem:kernels}), grasped as target categories.
It is important not to confuse the symmetries $\Aut_\cC X$ of the objects
$X$ of $\cC$ with those of the representation category,
$\Aut _{\Rep Q} (R)$, which is what the gauge group computes (for
running $R$).  In the previous version of this article, as well as in
\cite[Prop. 13]{MvS}, a gauge group is reported that has the form $
\coprod_{X: Q_0 \to \mathcal{C}} \prod_{v\in Q_0}
\Aut_\mathcal{C}(X_v)$. When $\mathcal{C}$ is a category, as those chosen above,
for which $\hom _{\mathcal{C}} (X_{s(e)} , X_{t(e)})$ can be empty for
some edge $e$, the ``$\coprod_{X: Q_0 \to \mathcal{C}}$'' above still
contains maps $X$ that are spurious, in the sense that they do not
yield a $\mathcal{C}$-representation of $Q$ (thereof, in particular,
they do not guarantee that $\hom _{\mathcal{C}} (X_{s(e)} , X_{t(e)})$
is non-empty for all $e\in Q_1$). A map $X^\diamond: Q_0\to \mathcal
K$ such that for some edge, $e^\diamond \in Q_1$, $\hom _{\mathcal{C}}
(X^\diamond_{s(e^\diamond)} , X^\diamond_{t(e^\diamond)}) = \emptyset$
holds, yields (recall Ex.~\ref{ex:addedV2})
\begin{align}
 \label{emptyX}
\prod_{e\in Q_1}
\hom _{\mathcal{C}} (X^\diamond_{s(e)} , X^\diamond_{t(e)}) = \emptyset.
\end{align}
This means that no $R \in \Rep_{\mathcal{C}} Q $ has the
form $R=(X^\diamond, \Phi)$ for any such label, but due to
\eqref{emptyX}, $X^\diamond$ can be listed or not in $\Rep_{\mathcal
  K} Q =\coprod_{X: Q_0 \to \mathcal{C}} \prod_{e\in Q_1} \hom
_{\mathcal{C}} (X_{s(e)} , X_{t(e)}) $.  This way, \textit{the first
  purpose of the Bratteli networks is to} list all those $X: Q_0 \to
\pS $ for which this does not happen, thereby
selecting \textit{actual contributions to $\Rep_{\pS} Q$}. But the true substance of
Bratteli networks is the following: Since the gauge group is defined
as equivalence of representations, spurious labels $X^\diamond$ with
\eqref{emptyX} do not
lead to a representation and must be excluded. We observe that the next
contention is proper:
\begin{align}\label{proper_contention}
\mathcal{G}_\pS(Q)=
\coprodsub {\text{Bratteli}\\ \text{networks} \\ (\bn_Q, \br_Q ) }
\prod_{v\in
  Q_0 } \Aut_{\pS} (X_v (\bn_Q,\br_Q) )
\subsetneq  \coprod_{X: Q_0 \to \pS} \prod_{v\in Q_0} \Aut_\pS(X_v).
\end{align}%
While for $\Rep_\pS(Q)$ it is a matter of choice whether one lists all
maps $X:Q_0\to \pS$, or not, when it comes to the gauge group, such
trivial maps must be excluded.  This is because $\hom$-sets can be
empty but $\Aut$-groups cannot be. Hence the expression in the right
of \eqref{proper_contention} has no mechanism to exclude spurious
labels as $X^\diamond$ above.  \textit{This shows how Bratteli
  networks are not only useful; for sake of determining the gauge
  group, they are quintessential}.  An analogous (not yet reported)
structure to Bratteli networks would be required to determine
$\mathcal G_{\mathcal{S}_0}(Q)$ or $\mathcal G_{*\text{-\textsf{alg}}}(Q)$.
\end{remark}

\noindent
\begin{minipage}{.734\textwidth}
\begin{example}[Why the gauge group requires Bratteli networks]\label{ex:GGdifferences}
Recall Example \ref{ex:addedV2}, where it was seen that the map on the
right $X^\diamond : Q_0 \to \pS$, given by $X_x^\diamond = (A_x,H_x) =
[\M 2 \oplus \M 3 , \C^2 \oplus (\C^3 \otimes \C^3) ]$, by
$X_v^\diamond= [\M 5 \oplus \M 6 , \C^5 \oplus \C^6 ] $ and
$X_z^\diamond = [\M {11},\C^{11}]$, is not a datum for a quiver
$\pS$-representation since $e_3=(z,v)$ does not lift to a
Bratteli \textit{diagram} (thus to no $\pS$-morphism), so it does not
contribute to $\ReppS Q$.
\end{example}
\end{minipage}
~
\begin{minipage}{.2\textwidth}
\[\includegraphics[width=5.5cm]{BratteliGegenbeispiel2}\]
\end{minipage}\\[.3ex]%
This spurious $X^\diamond$ is not the object-label of any $R\in \ReppS
Q$ [i.e. no $R$ there is of the form $R=(X^\diamond, \Phi)$], and
since the gauge group is computed in terms of automorphisms of
representations $R$, the label $X^\diamond$ should not appear in the
gauge group.  Nevertheless it does contribute \[ [ \uni(5)\times
  \uni(6) ]_{v=t(e_1)} \times \uni(11)_{z=t(e_2)} \times [
  \uni(5)\times \uni(6) ]_{v=t(e_3)} \] to the right expression of
\eqref{proper_contention}, showing the proper contention there. After excluding from $ \coprod_{X: Q_0\to
  \pS} $ all those spurious labels that do not yield representations,
one is left the actual gauge group; when this process finishes, the
$\coprod_{X: Q_0\to \pS}$ boils down to a disjoint union over Bratteli
networks.

\begin{theorem}\label{thm:RepQ_modG}
With the group action of Lemma \eqref{lem:GaugegroupAction},
\[
\frac {\ReppS Q}{\mathcal G(Q)} = \coprod _{\substack{\text{Bratteli} \\ \text{networks}\\ (\bn , \br  ) }} \Bigg\{
 \frac{
\,\prod_{e \in
  Q_1 }   \uni \big( \bn_{t(e)}\big)
}{
\prod_{v\in
  Q_0 }  \Sym(\bn_v,\br_v)  \ltimes \uni (\bn_v)   } \Bigg\}\,.
\]
\end{theorem}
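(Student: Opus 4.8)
The plan is to obtain the quotient directly from the two structural results already in hand: the Bratteli-network parametrization of $\Rep_\pS(Q)$ with fibers $\prod_{e\in Q_1}\uni(\bn_{t(e)})$ from Lemma \ref{thm:RepQis...}, and the description of $\mathcal{G}(Q)=\coprod_{(\bn_Q,\br_Q)}\prod_{v\in Q_0}\mathcal{G}_v$ together with its explicit action \eqref{gauge_Action_formeln} from Lemma \ref{lem:GaugegroupAction}. The core idea is to prove that the gauge action is compatible with the outer disjoint-union decomposition, so that the quotient may be computed summand-by-summand over Bratteli networks.

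First I would check that every gauge transformation preserves the integer data $(\bn_v,\br_v)_{v\in Q_0}$ of a Bratteli network. This holds because each permutation component $\sigma_v\in\Sym(\bn_v,\br_v)$ is by definition only allowed to interchange summands of equal matrix size and equal multiplicity, so it fixes the tuple $(\bn_v,\br_v)$, while the unitary component $g_v\in\uni(\bn_v)$ acts by twisted conjugation and cannot alter integer data either. Consequently the action does not mix different $(\bn,\br)$-sectors, and the quotient $\Rep_\pS(Q)/\mathcal{G}(Q)$ splits as the asserted outer $\coprod$ indexed by Bratteli networks.

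Next I would analyze the action on a single fiber. Putting $\sigma_v=\id$ in \eqref{gauge_Action_formeln} leaves each $\Brat_e$ fixed and reduces the unitary piece to the genuine left-right action $U_e\mapsto g_{t(e)}\,U_e\,\Brat_e(g_{s(e)}^{-1})$ on $\prod_{e\in Q_1}\uni(\bn_{t(e)})$, whose quotient realizes the $\uni(\bn_v)$-factors. Re-introducing the $\sigma_v$ implements the relabeling $\Brat_e\mapsto\sigma_{t(e)}\circ\Brat_e\circ\sigma_{s(e)}^{-1}$ together with the corresponding twist of the $U_e$, and the two-step computation of $\Brat''_e,U''_e$ carried out at the end of the proof of Lemma \ref{lem:GaugegroupAction} shows that the $\Sym$ and $\uni$ factors compose exactly as the semidirect product $\Sym(\bn_v,\br_v)\ltimes\uni(\bn_v)=\mathcal{G}_v$. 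Hence quotienting the fiber first by $\prod_{v\in Q_0}\uni(\bn_v)$ and then by $\prod_{v\in Q_0}\Sym(\bn_v,\br_v)$ coincides with quotienting by the full product $\prod_{v\in Q_0}\mathcal{G}_v$, producing each summand on the right-hand side.

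The step I expect to be the main obstacle is the bookkeeping of the biadjacency matrices $C_e$ under the permutation part: since $\sigma_v$ conjugates $\Brat_e$, a single gauge orbit can visit fibers sitting over several distinct $C_e$ within one $(\bn,\br)$-sector, and I must verify that all these fibers are literally the same space $\prod_{e\in Q_1}\uni(\bn_{t(e)})$ (it depends only on the tuples $\bn_{t(e)}$, never on $C_e$), so that no orbit is double-counted and the twisted conjugation by $\Brat_e$ is precisely what the semidirect product records. Independence of the chosen representative $R$ within each $\simeq$-class—needed for the quotient to be well-posed—is already supplied by the opening paragraph of the proof of Lemma \ref{thm:GaugeGroupThm}, which I would invoke to close the argument.
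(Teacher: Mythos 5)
Your proposal is correct and follows exactly the route the paper intends: the theorem is stated there without a separate proof, as the immediate assembly of Lemma \ref{thm:RepQis...} (the Bratteli-network decomposition of $\Rep_\pS(Q)$), Lemma \ref{thm:GaugeGroupThm}, and the explicit action of Lemma \ref{lem:GaugegroupAction}, which is precisely what you carry out. Your attention to the fact that the $\Sym$-part may permute the biadjacency data $C_e$ within a fixed $(\bn,\br)$-sector while the fibers $\prod_{e}\uni(\bn_{t(e)})$ remain literally identical is the right way to justify the summand-by-summand quotient that the paper leaves implicit.
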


\subsection{The spectral triple of a quiver} \label{sec:STofQuiver}

We introduce now a spectral triple and a Dirac operator that is
determined by a representation of $Q$ and by a (graph-)distance $\rho$
on $Q$.  We remark that the addition of the latter is not overly
restrictive since, in the most interesting cases, the action
functional that depends on such Dirac operator will be either fully
independent of the graph-distance $\rho$ and end up depending only on
the holonomies, or, in the worst case, it will be dependent only on
the distance evaluated on the self-loops $(\rho_{v,v})_{v\in Q_0} \in
\R^{\#Q_0}_{\geq 0}$ (see Prop.~\ref{thm:Higgsa_od_kołczanów}).  \par
We now make precise some ideas already commented on in Sec.~
\ref{sec:motivation}. A (finite-dimensional) spectral triple $(A,H,D)$
is by definition a prespectral triple $(A, H) \in \pS$ together with a
self-adjoint operator $D: H\to H$.

\begin{definition}[Spectral triple and $ D_Q(L,\rho)$ for a quiver representation]
Given a quiver $Q$ and a representation in $R\in\ReppS(Q)$,
  $ R=\{   (A_v,H_v)_v, (\phi_e,L_e)  \}_{v\in Q_0,e\in Q_1}$,
  define \begin{align*} A_Q=\bigoplus_{v\in Q_0} A_v \, \quad \text{ and } \quad H_Q=\bigoplus_{v\in Q_0} H_v\,. \end{align*}
  This definition is motivated by applying to $R$ the functor \eqref{FromReptoMods} that yields a path algebra module.
 We construct the third item to get a spectral triple $(A_Q,H_Q,D_Q)$. Given a graph distance $\rho:Q_1\to \R_{>0} \cup \{\infty \} $ on $Q$, the  \textit{Dirac operator $D_Q(L,\rho) $ of a quiver representation} is defined by
\begin{align}
D_Q(L,\rho ): H\to H\,,\qquad
D_Q(L,\rho) = \mathscr{A} \symmetr_Q(b)\,, \label{defD}
\end{align}
(see Eq.~\eqref{symm_weight_matrix_def} for definition of $\mathscr{A}
\symmetr_Q$), with weights $b$ given by scaling the unitarities $L$ by
the graph distance inverse $\rho\inv$, that is
\begin{align*}
b:Q_1\to \hom_{\C\text{\tiny -Vect}}(H_{s(\,\, \cdot \,\, )},H_{t(\,\,
  \cdot \,\,)}) \quad b_e: H\sour \to H\targ, \quad
b_e:=\frac{1}{\rho(e)} L_e, \text{ for each } e\in Q_1\,.
\end{align*}
(Thus $D_Q(L,\rho)$ is the Hadamard product $ \mathscr{A}
\symmetr_Q(L)\odot \rho\inv$ when $Q$ does not have multiple edges.)
On a lattice, $\rho$ is the lattice spacing.
If no graph distance $\rho$ is mentioned, we will tacitly understand that the Dirac operator
is the one in \eqref{defD} with $\rho(e)=1$ for all edges $e$, and denote it by $D_Q(L)$.
For a loop $p$ based at $v$, we define the \textit{holonomy} in this
spectral triple as $\hol_b$, that is\begin{align*} \hol_{\rho\inv L}(p) =
\prod_{e\in p}^{\to} \frac{1}{\rho(e)} L_e : H_{v} \to H_{v} \,,%
\end{align*}
where the arrow on the product sign emphasises the coincidence of the
order of the product of the unitarities with the order the edges
appear in $p$ using the same criterion as in Eq.
\eqref{Wilson_and_Holonomy}. In this context, given a closed path $p$
based at $v$, the Wilson loop is obtained by tracing this holonomy,
$\Wils= \Tr_{v} \circ \hol_{\rho\inv L}$, where we started
to abbreviate $\Tr_v$ for traces of operators $H_v\to H_v$.
\end{definition}
\begin{figure}[h!]
\noindent
\begin{minipage}{.624\textwidth}
\begin{example}[Explicit Dirac operator for a representation]
For $m\in \N$ let $n=m^2$ and consider the quiver $\mathcal T_n$ with vertex
set $(\mathcal T_n)_0=\{1,\ldots, m^2\}$ and edges set
\begin{align*} \notag
(\mathcal T_n )_1& =\{ (v, v+1 ) : v=1,\ldots, m^2-1 \} \\
 & \cup \{ (v, v+m ) : v=1,\ldots, m^2-m \}\notag \\
 & \cup \{ (m(m-1)+v,v ) : v=1,2,\ldots, m   \}   \cup  \{ (1,m^2) \} \notag
\end{align*} For instance, the quiver on the right represents
$\mathcal T_{16}$.  If we distribute the vertices on a square lattice
then $v$-th vertex is source of the arrow pointing to $v+1$ and also
of an arrow with target $v+m$. When we are near to the right or upper
boundary of the square, compactification of it to a torus yields the
last line of edges. \end{example}
\end{minipage}
~
\begin{minipage}{.37\textwidth}\captionsetup{font=small}
\hspace{0.452cm}\runter{\includegraphics[width=5.17cm]{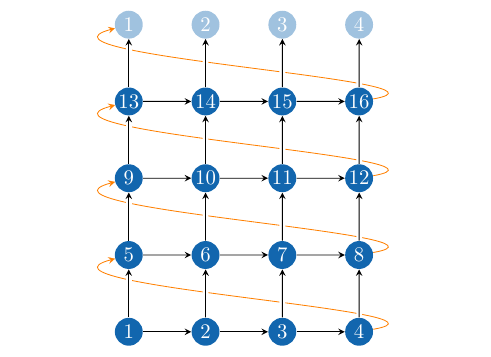}}
\caption{$\mathcal T_{16}$ (opaque
vertices reappear from below;
arrow colour plays no role).}
\end{minipage}\vspace{-2ex}
\end{figure}
The connectivity at the right boundary being `shifted by one' in the
vertical direction (instead of naturally connecting $n$ with $1$, $2n$
with $n+1$, etc.) is for sake of providing in explicit way the Dirac
matrix.  For a representation $R=(X,\Phi) =\big ( (A,H), (\phi,L)
\big) :Q \to \pS$, the Dirac operator $D_{T_{n}} (L)$ reads (writing
$L_e=L_{v,w}$ for $e=(v,w)$ with $v<w$)
 \begin{align*} \nonumber D_{\mathcal T_n} (L)=\!\!\!\runter{
\includegraphics[width=.72\paperwidth]{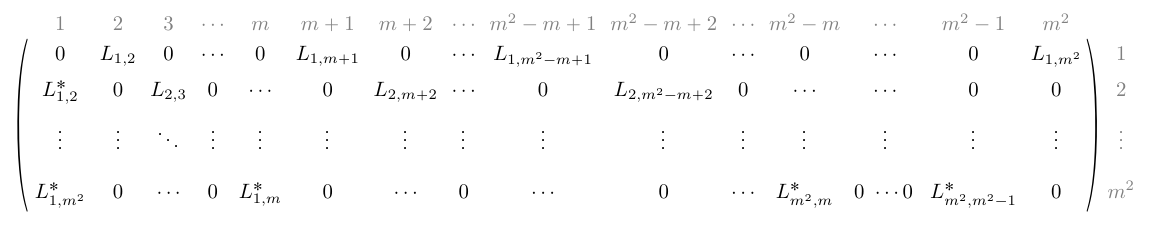} }
\end{align*}

\begin{remark}\label{rem:infdims}
When the triple $(A_Q,H_Q,D_Q)$ becomes infinite-dimensional (for
instance, when $\#Q_0$ is infinite) two axioms that trivially hold
when $\dim A_Q $ and $\dim H_Q $ are finite need to be verified in
order to still call this triple spectral.  From
\cite[Def. 2.6]{Chamseddine:2006ep} a spectral triple\footnote{The
author thanks John Barrett for the question that motivated this
remark. } $(A,H,D)$ should also satisfy the boundedness of the
operators $[D,\lambda (a)] $ on $H$ (for all $a \in A$, being
$\lambda: A \curvearrowright H$ the $*$-action) and the compactness of
the resolvent of $D$---and even more operators on $H$ and further
axioms are needed to prove the reconstruction theorem
\cite{Reconstr}. It remains as perspective to prove both properties
for the triple $(A_Q,H_Q,D_Q)$ above, which for a finite quiver hold,
for infinite quivers.
\end{remark}

\section{Representations on lattice quivers}\label{sec:lattice_paths}
In order to exploit the path formula \eqref{pathcomposition} we count
paths without weights first ($b_e=1,e\in Q_1)$. For $v\in Q_0, k\in
\N$, we use the following notation
\begin{align*}
\mathcal N_k (Q,v) = \{w \in Q_0 : \text {shortest path from $v$ to
  $w$ has length exactly $k$}\}\,.
\end{align*}
The cardinality of this set is independent of the vertex $v$ if $Q $
is a lattice. If this lattice is $d$-dimensional and periodic with
$Q_0= (\Z/m\Z)^d$, we write $h_d(k):= \# \mathcal N_k (Q,v)$, that is,
the volume of the $L^1$-sphere of radius $k$ (see
Fig. \ref{fig:L1norm}).  These integers form the coordination sequence
$\{h_d(1),h_d(2),h_d(3),\ldots\}$, which for an orthogonal lattice
($d=2$ square, $d=3$ cubic,...) can be obtained from the
Harer-Zagier\footnote{In the Harer-Zagier formula
\cite[Prop. 3.2.10]{GraphsonSurfaces},
\begin{align*}\notag
 \bigg(\frac{1+z}{1-z}\bigg)^d = 1+ 2z d + 2 z \sum_{k\geq 1 }
 \frac{   T_k(d) }{(2k-1)!!} z^{k}
\end{align*}\thispagestyle{empty}the polynomial $T_k(d)=\sum_{2g\leq k} c_g(k)d^{k+1-2g}\,$
that generates the number $c_g(k)$
of pairs of sides of an $2k$-agon
that yield a  genus $g$ surface,
have the following integral representation
 \begin{align*}\notag
     T_{k}(d) :=d^{k-1} \int_{\M d _{\mathrm{s.a.}}}
  \mathrm{Tr} (X^{2k}) \,\dif \nu(X)  \,,
  \end{align*}
 where $\dif \nu (X)$ is the normalised Gaussian measure
$\dif \nu(X) = C_d \ee^{-d \Tr \frac {X^2}{2}} \dif{X}\,,$
being $\dif X$ the Lebesgue measure on the space of
hermitian $d$ by $d$ matrices.  } generating function
 \begin{align}\label{HZ_lattice}
 \textsc{HZ}_d(z) &:=\bigg[\frac{1+z}{1-z} \bigg]^d =\sum_{k\geq 0 } h_d(k)z^k   \\
\nonumber
&\textstyle=1+ 2 \, d z+ 2 \, d^{2} z^{2}+ \frac{2}{3} \, {\left(2 \, d^{3} + d\right)}  z^{3}
+ \frac{2}{3} \, {\left(d^{4} + 2 \, d^{2}\right)} z^{4}\\ &\textstyle
\nonumber \textstyle+\frac{2}{15} \, {\left(2 \, d^{5} + 10 \, d^{3} + 3 \, d\right)} z^{5}
 \textstyle+\frac{2}{45} \, {\left(2 \, d^{6} + 20 \, d^{4} + 23 \, d^{2}\right)} z^{6} +\ldots\nonumber
\end{align}
Figure \ref{fig:L1norm}, where lattice points are labelled by the
$L^1$-distance to the vertex $v$, shows to sixth order the first three
series for $d=1,2,3$, explicitly
\begin{subequations}%
 \begin{align}
\textsc{HZ}_1(z)& =1 + 2 z + 2 z^{2} + 2 z^{3} + 2 z^{4} + 2 z^{5} + 2 z^{6} + 2 z^{7} + \ldots  \label{HZ1}\\
\textsc{HZ}_2(z)& =1 + 4 z + 8 z^{2} + 12 z^{3} + 16 z^{4} + 20 z^{5} + 24 z^{6} + 28 z^{7}+\ldots \label{HZ2}\\
\textsc{HZ}_3(z)& =1 + 6 z + 18 z^{2} + 38 z^{3} + 66 z^{4} + 102 z^{5} + 146 z^{6} + 198 z^{7}+\ldots\label{HZ3} \end{align}
\end{subequations}
The very last series is \textsc{A005899} of \cite{oeis_walks_Kn}.
\begin{figure}[b]\captionsetup{font=small}
{\centering \includegraphics[width=.75\textwidth]{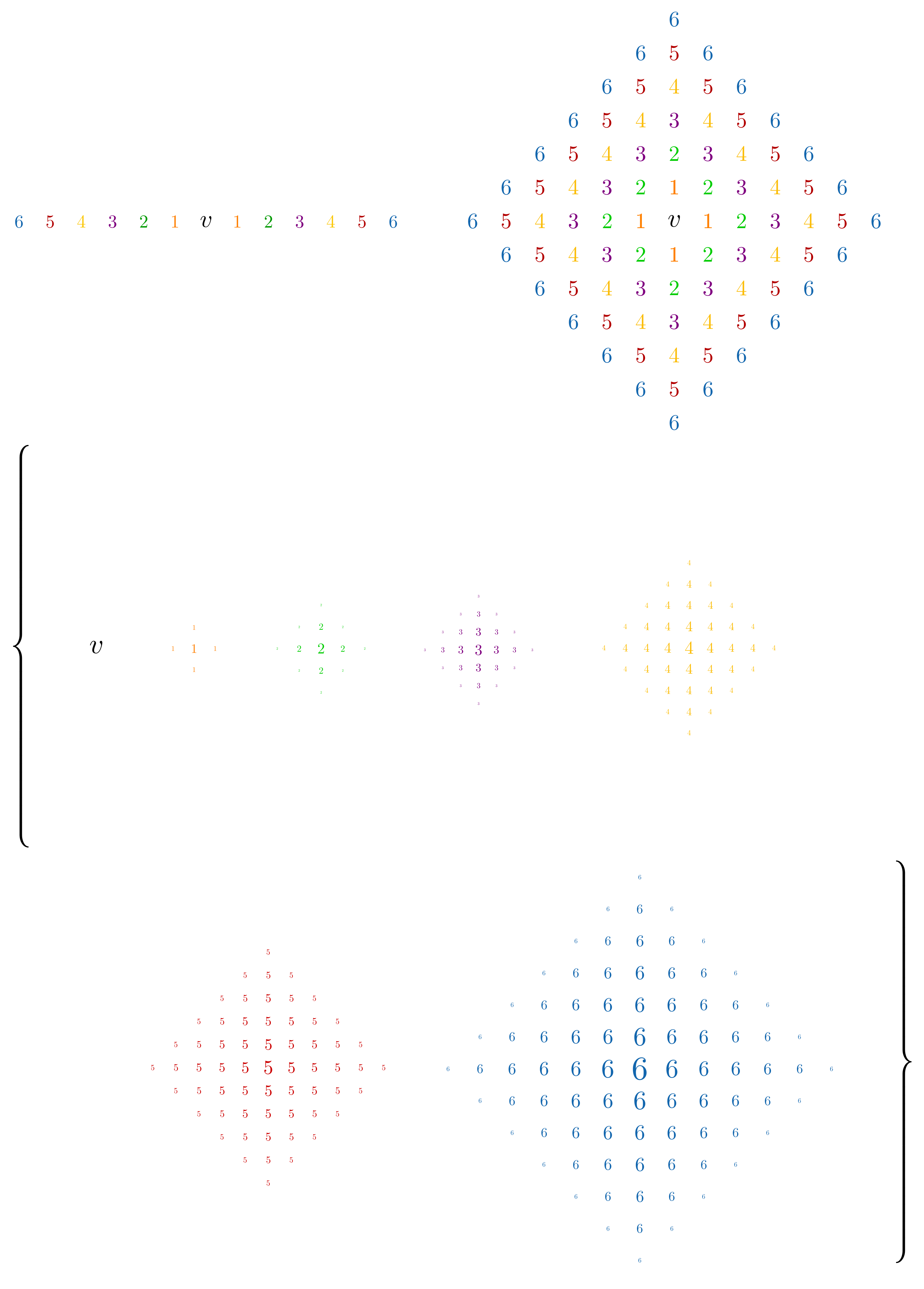}}
\caption{$L^1$-spheres in a $\Z_m^d$-lattice ($m>6$); each integer
  depicts the radius $k=1,\ldots, 6$ of the sphere around $v$ it lies
  on.  \textit{Upper left:} for $d=1$, there are only two possible
  ends of paths of any fixed length, cf . \eqref{HZ1}, for any given
  radius.  \textit{Upper right:} with $d=2$, around $v$ we see four
  points of radius 1, eight of radius 2, $\ldots$, twenty four of
  radius $6$, corresponding to the first six terms of Series
  \eqref{HZ2}.  \textit{In curly braces:} for $d=3$\label{fig:L1norm},
  we see $L^1$-spheres `from above' distributed on an octahedron.
  For instance, for radius $6$, the 85 points one
  \textit{sees} have to be doubled, to add those seen from below and
  not shown---except those in the equator (24, shown in tiny) are
  double counted and should be subtracted to get the $2\times 85 -24=
  146$ in agreement with $146=[z^6]\textsc{HZ}_3(z)$ from Series
  \eqref{HZ3}.  }
 \end{figure}
For $m,d\in \Z_{\geq 2}$ let us define the quiver $Q=T^d_m$ by
$Q_0=(\Z/m\Z)^d$ and by following incidence relations. The outgoing
$t\inv (v)$ edges and incoming edges $ s\inv (v)$ at any $v\in Q_0$
are, by definition of $Q_1$, given by
\noindent
\begin{align*}
s\inv (v) &= \{ (v,w) : w= v_{+i} := v + \mathbf e_i ,\quad i=1,\ldots, d \} \\
 t\inv (v) & =  \{ (w,v) : w=v_{-i} := v - \mathbf e_i, \quad  i=1,\ldots, d \}\end{align*}
where $\mathbf e_i$ is the $i$-th standard basic vector, $\mathbf e_1=(1,0,\ldots, 0),
\mathbf e_2=(0,1,0\ldots, 0),
\ldots, \mathbf e_d=(0,\ldots, 0,1)$ and the sum is component-wise on $\Z_m$.
For $i,j\in\{ 1,\ldots,d\}$ with $i\neq j$ 
the \textit{plaquettes}  on $Q^\star$
are an important type of length-$4$ loops based at $v$ defined by
\begin{align*}P_{\pm i,+ j} (v)& =(v,v\pm \mathbf e_i, v+\mathbf e_j, v\mp \mathbf e_i, v- \mathbf e_j )\,, \\
 P_{\pm i, -j} (v)& = (v,v\pm \mathbf e_i, v-\mathbf e_j, v\mp \mathbf e_i, v+ \mathbf e_j )\,.
\end{align*}
\begin{figure}[htb]
 \begin{align*}
\runter{
\includegraphics[width=4.2cm]{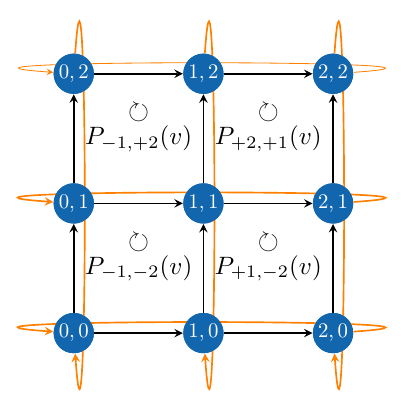}}
 \end{align*}
\caption{$T_3^2$ and some of its plaquettes around $v=(1,1)$ are shown.\label{T23}}%
\end{figure}%
\noindent%
For $d=2$ and $m=3$ the quiver\footnote{Since plaquettes are paths in
$Q^\star$ we should show this augmented quiver, but to simplify
visualisation we show $Q$ and expect the reader to add, for each edge,
another in the opposite direction.} $T_m^d$ and some plaquettes are
shown in Figure \ref{T23}.
\begin{proposition}\label{thm:TracesTorusD}
Let $N\in\Z_{>0}$. Given a representation $R=\{ (A,H)_v, (\phi,L)_e \}
_ {v\in Q_0, e \in Q_1}\in \Rep^N_{\pS}(Q)$ of $Q={T^d_m}$,
abbreviating $D=D_{T^d_m} (L) $, and setting the natural edge distance
$\rho:Q_1\to \R$ to be the constant lattice spacing $\rho(e)=1$ for
each edge, if $m\geq 5$ and $d\geq 2$, one obtains
\begin{subequations}
 \begin{align}
 \Tr (D^0) &= m^d N\,,
 \\
 \label{Tr2DTorus}
\Tr (D ^2)  & = m^d \times 2d \times N \,, \\
 \label{Tr4DTorus}
\Tr (D^4)& = (8d^2-2d) m^d N  + \sum_{v \in \Z_m^d} \sumsub{P \in
  \Omega_v(T^d_m) \\ {\mathrm{\tiny plaquettes}}} \Tr_v \big[ \hol_L (P)
  \big]\,.
\end{align}%
\end{subequations}%
\end{proposition}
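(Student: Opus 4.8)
The plan is to invoke Corollary~\ref{thm:Wksym} for $k=2$ and $k=4$ (handling $k=0$ separately) and then to carry out a careful census of the closed walks of each length in the augmented lattice $Q^\star=(T^d_m)^\star$. Since $\rho\equiv 1$ the weights are the unitaries $b_e=L_e$, so Corollary~\ref{thm:Wksym} gives $\Tr(D^k)=\sum_{p\in\Omega Q^\star,\,\ell(p)=k}\Wils\symmetr(p)$ with $n=\#Q_0=m^d$, and each $\Wils\symmetr(p)=\Tr_v[\hol_L(p)]$ is the trace of the ordered product of the operators $L_e$ (for a forward augmented edge) and $L_e^*$ (for a reversed one) around the loop $p$ based at some $v\in Q_0$. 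The zeroth moment is immediate: $D^0=\id_{H_Q}$, whence $\Tr(D^0)=\dim H_Q=\sum_{v\in Q_0}\dim H_v=m^d N$, using that $R\in\Rep^N_\pS(Q)$ forces $\dim H_v=N$ at every vertex (see \eqref{rep_N}).

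The decisive structural remark is that a closed walk in $Q^\star$ based at $v$ is exactly a sequence of steps $\pm\mathbf e_i$ whose sum vanishes in $\Z_m^d$. For $m\geq 5$ each coordinate of a sum of at most four such steps lies in $[-4,4]$, so the closure condition $\sum_a(\pm\mathbf e_{i_a})\equiv 0\pmod m$ forces the sum to be $0$ already in $\Z^d$; this excludes spurious \emph{winding} loops and is precisely where the hypothesis $m\geq 5$ is used. A second observation is that whenever a closed walk reduces to the constant loop by successively deleting backtracking pairs $e\bar e$ (equivalently, it is null-homotopic), the matching $L_e$ and $L_e^*$ cancel, its holonomy is $1_{H_v}$, and it contributes $\Tr_v(1_{H_v})=N$; by contrast a loop encircling a unit cell is irreducible and carries a generally nontrivial holonomy.

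For $k=2$ I would note that from each vertex $v$ there are exactly $2d$ augmented edges (to $v\pm\mathbf e_i$, $i=1,\dots,d$) and that every closed $2$-walk is a back-and-forth along one of them; all are reducible, hence contribute $N$, giving $2d\cdot N$ per vertex and $\Tr(D^2)=m^d\cdot 2d\cdot N$ after summing over the $m^d$ vertices. For $k=4$ the vanishing-sum condition splits the census into exactly two families: \emph{(i)} all four steps along a single direction $\mathbf e_i$ (two $+$, two $-$), giving $\binom{4}{2}=6$ orderings per direction, all backtracking hence trivial; and \emph{(ii)} one step each of $+\mathbf e_i,-\mathbf e_i,+\mathbf e_j,-\mathbf e_j$ with $i\neq j$, giving $4!=24$ orderings per unordered pair. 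Reading the four steps cyclically, the $16$ orderings in which $+\mathbf e_i$ and $-\mathbf e_i$ are adjacent backtrack and are trivial, while the $8$ in which they are opposite trace a genuine unit square, i.e.\ a plaquette $P$, contributing $\Tr_v[\hol_L(P)]$; I would check that these $8\binom{d}{2}=4d(d-1)$ loops are exactly the $P_{\pm i,\pm j}(v)$ defined above. Collecting trivial contributions, each vertex supplies $6d+16\binom{d}{2}=6d+8d(d-1)=8d^2-2d$ loops of trivial holonomy, each worth $N$, yielding $(8d^2-2d)m^d N$ after summing over $v$, with the remaining part being exactly $\sum_{v}\sum_{P\ \mathrm{plaquette}}\Tr_v[\hol_L(P)]$, which proves \eqref{Tr4DTorus}.

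The main obstacle is the length-$4$ bookkeeping: one must argue that the enumeration of closed walks is complete and correctly partitioned into the reducible loops (whose holonomy collapses to $1_{H_v}$) and the $8$ plaquettes per coordinate plane, while ensuring that $m\geq 5$ genuinely rules out winding and that no loop is double-counted in passing between the per-direction, per-pair, and based-at-$v$ descriptions. Everything else reduces to the trace-counting already established in Proposition~\ref{thm:Wk} and Corollary~\ref{thm:Wksym}.
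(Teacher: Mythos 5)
Your proof is correct and rests on the same key lemma as the paper's: both reduce $\Tr(D^k)$ to a sum of Wilson loops over length-$k$ closed walks in $(T^d_m)^\star$ via Corollary~\ref{thm:Wksym}, and both then separate the walks whose holonomy telescopes to $1_{H_v}$ (each worth $N$) from the plaquettes. The one genuine difference is the bookkeeping for $k=4$: the paper stratifies loops by the maximal $L^1$-distance $\varrho(p)\in\{1,2\}$ reached from the base point (its Cases I, II(a)--(c)), whereas you stratify by the multiset of steps (two $+\mathbf e_i$ and two $-\mathbf e_i$ along one axis, versus one cancelling pair in each of two axes) and then by whether an inverse pair sits cyclically adjacent or opposite. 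The two censuses agree --- $6d+16\binom{d}{2}=8d^2-2d$ trivial loops and $8\binom{d}{2}=4d(d-1)$ plaquettes per vertex, totalling $12d^2-6d=c_d(4)$ closed walks --- so nothing is missed or double-counted. A small bonus of your version is that it makes explicit where $m\geq 5$ enters (a displacement of at most four unit steps can vanish mod $m$ only if it vanishes in $\Z^d$, which rules out winding loops), a point the paper's proof leaves implicit.
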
%
Since $m>1$, there is no length-1 loop, so $\Tr (D_Q(L))=0$. In fact,
since the lattice is rectangular, $\Tr (D_Q(L)^k)=0$ for odd $k$,
since $k< m$ forbids loops of odd length (for $k\geq m$ a straight
path through the vertices $v, v+\mathbf e_i, v+2\mathbf e_i, \ldots,
v+m \mathbf e_i=v$ could be a loop, e.g. if $m$ is odd).
\begin{proof}
We use $Q={T^d_m}$ to simplify notation.  First, $\Tr D^0$ is the
trace of the identity on $H=\oplus_{v\in Q_0} H_v$, which amounts to
$\Tr(1) = \sum_{v\in Q_0} \dim H_v= \#Q_0 N $.  For higher powers of
the Dirac operator, we can use the path formula of Corollary
\ref{thm:Wksym}. If $k>0$ is even, no loop goes outside $\mathcal
N_{k/2} ^d(v)$ and we can ignore paths that exceed this radius.  \par
Now we observe that the set of length-$2$ loops at $v$ is in bijection
with the set $\mathcal N_1^d(v)$ of the nearest neighbours of
$v$. Then $\Tr [D_Q(L)^2] =\sum_{v\in Q_0 }\sum_{w \in \mathcal
  N_1^d(v)} \Tr (L_{v,w}L_{w,v} ) = \sum_{v\in Q_0 }\sum_{w \in
  \mathcal N_1^d(v)} \Tr (1_{H_v})$ by \S ~\ref{sec:geometry} and
$2d=\# \mathcal N_1^d(v)$ by \eqref{HZ_lattice}, for any
$v$. Eq.~\ref{Tr2DTorus} follows. \par

We turn to the case $k=4$. First, split the length-$4$ loops $p \in
\Omega_v Q$ into two cases, according to `how far' a path goes from
$v$, namely the largest $\varrho=\varrho(p)$ for which $p$ intersects
$\mathcal{N}_\varrho(v)$.
\begin{itemize}
 \item[]\textit{Case I.} If $\varrho(p)=1$. Write
   $p=[e_1,e_2,e_3,e_4]$ for $e_j\in Q_1$. Notice that $e_2$ must be
   $\bar e_1$ since otherwise $\varrho>1$, so $e_3$ starts at $v$ and
   it can end anywhere in $\mathcal{N}_1(v)$; but $e_4=\bar e_3$ again
   since $\varrho=1$. Clearly this defines a bijection
   $\mathcal{N}_1(v)\times \mathcal{N}_1(v)\to
   \Omega_v(Q)|_{\ell=2,\max \varrho =1}$. In any of these cases, the
   $h_d(1)^2=(2d)^2$ paths $p$ with $\varrho(p)=1$ contribute
   \begin{align*}\Wils(p)= \Tr ( L_{e_1} L_{e_2} L_{e_3} L_{e_4}) =
   \Tr ( L_{e_1} L_{e_1}^* L_{e_3} L_{e_3^*}) = N\, . \end{align*}
   \item[]\textit{Case II.} If $\varrho(p)=2$ then there is a unique
     $w\in \mathcal{N}^d_2(v)$ reached by $p$ (else $\ell(p)>4$),
     cf. Figure \ref{fig:pathstypeslength4}. We now count the loops at
     $v$ that can contain $w$:
 \begin{itemize}
  \item[(a)] If $e_1$ is parallel to $e_2$ then there is a unique loop at
    $v$ containing $w$, and since $\varrho=2$ one has $e_3=\bar e_2$ and
    $e_4=\bar e_1$. Thus $e_1$ fully determines
    $\#\mathcal{N}^d_1(v)=h_1(d)=2d$ such paths, all of which contribute
    $\Wils(p)=N$.
\item[(b)] Else, $p=[e_1,e_2,e_3,e_4]$ and $e_2$ is not parallel to
    $e_1$. Suppose that $w$ is visited by the loop $p=[e_1,e_2,\bar e_2,
    \bar e_1]$ (if not, $p$ is a plaquette, see below). Since the number of such points $w$
     equals those on the $L^1$-ball of radius-$\varrho=2$ minus
     those reached by $e_1$ parallel to $e_2$,
         there are $2\times [h_2(d)-h_1(d)]=2\times 2d(d-1)$ loops (the
         factor of $2$ due to two ways of reaching the same point $w$)
         all of them  contributing $\Wils(p)=N$.
\item[(c)] Or else, $p=[e_1,e_2,\bar e_1, \bar e_2]$ is a plaquette, and
  then $\Wils(p) = \Tr
  (L_{e_1}L_{e_2}L_{e_1}^*L_{e_2}^*)=\Tr \hol_L(p)$. Clearly, if $w$ lies
  in the path $p=P_{i,j} $, then so does in $\bar p=P_{j,i}$, both
  of which are different paths (swapping clockwise with
  counter-clockwise).  This yields an extra $2$ factor and $2
  [h_2(d)-h_1(d ) ] =4d(d-1)$ plaquettes. \qedhere
 \end{itemize}
\begin{figure}[h!]\captionsetup{font=small}
\raisebox{0ex}{\includegraphics[width=.70317\textwidth]{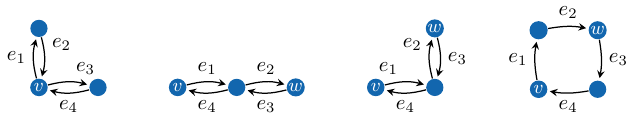}}
\caption{\label{fig:pathstypeslength4} \textit{From L to R}: Case
  $\varrho=1$, and Cases $\varrho=2$, (a), (b) and (c) in the proof of
  Prop.~\ref{thm:TracesTorusD}.  }
\end{figure}
\end{itemize}
\end{proof}

\subsection{Adding self-loops}

Given a quiver $Q$ let us denote by $\mathring Q $ or, ad libitum, by
$Q^\circ$ the quiver obtained from $Q$ by adding to the edge set a
self-loop (denoted $o_v$) for each vertex of $Q$. So $
Q^\circ=(Q_0,Q_1\dot\cup \{o_v: v\in Q_0\} )$.  From
Figure \ref{fig:LoopedQ} one can see that the notation's origin is
nothing else than mnemonics.
Due to \eqref{augmentation}, augmentation does not modify self-loops, so it commutes with
adding them and we can define $Q\starcirc:= (Q^\star) ^\circ =  (Q^\circ) ^\star  $.
For instance, from the Jordan quiver $J=\!\raisebox{-5pt}{\includegraphics{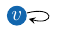}}$ one obtains
$
 J^\circ=\!\! \!\raisebox{-3pt}{\includegraphics{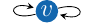}}\!\!$ and (recall Ex.~\ref{ex:Jordan1}) $J^\star=J$,
so $J\starcirc=J^\circ$. \par
%
Given a path $p=[e_1,\ldots,e_k]$ in $Q$, provided $v=s{(e_j)}$, one
can insert $o_v$ just before $e_j$ as follows \begin{align} (v^\vee_j
  p) :=[e_1,\ldots, e_{j-1},o_v,{e_j},\ldots, e_k ], \qquad 1\leq j
  \leq k,
\label{deletion_vertex}\end{align}  cf. Figure \ref{fig:LoopedPath}. If $t(e_k)=v$, one can extend $p$ by $o_v$,
$v_{k+1}^\vee {p}= o_v \cdot p $ as in Figure \ref{fig:LoopedEnd}, but
for closed paths the case $j=k+1$ coincides with the case $j=1$.
Since only closed paths contribute to the traces of the Dirac
operator, we ignore paths $p$ with $t(p)\neq s(p)$ from now on.  If
the condition $v=s{(e_j)}$ is not met or if $j>\ell(p)$, $(v^\vee_j p)
:=E_{s(p)}$ is the trivial, $0$-length loop at $s(p)$. We thus get
maps $v^\vee_j: \Omega Q \to \Omega Q^\circ$ for $j\in \Zpos$.  \par
On their `support', those maps increase the path length; other maps
exist that decrease it, also by $1$. Going in the opposite direction,
we define $v^\wedge_j: \Omega Q^\circ \to \Omega Q^\circ$. If the
$j$-th edge of a path $p$ is a self-loop $o_v$, let $v^ \wedge_j (p)$
be the path obtained from $p$ by omitting $o_v$, and otherwise let $v^
\wedge_j (p)$ be a trivial path. Equivalently,
\begin{align}  (v^\wedge _j  p)   := \begin{cases} [e_1,\ldots,
e_{j-1},\widehat{e}_j ,{e_{j+1}},\ldots, e_k] \, & \text { if $e_j$ is
      a self-loop and if $1\leq j \leq k$,}\\ E_{s(p)} & \text {
      otherwise.}
\end{cases}
\label{insertion_vertex}
\end{align}

\begin{figure}[htb]
\subfloat[\label{fig:LoopedQ} By definition, $\mathring Q_0=Q_0$. With
  gray $Q_1$, and with black the new edges added to $Q_1$ to form
  $\mathring Q_1$.]{\includegraphics[height=2.2cm]{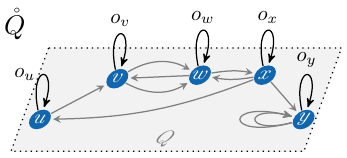}}
\qquad\,\, \subfloat[A path $p$ in $Q$ and the $j$-th insertion
  $v_j^\vee p$ of a self-loop edge $o_v$, $1\leq j\leq \ell(p)$.
\label{fig:LoopedPath}]{\quad\includegraphics[height=2.35cm]{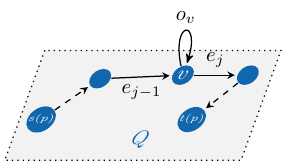}}
\qquad\,\,
\subfloat[A path $p$ in $Q$ and the corresponding $(k+1)$-th insertion
$v_{k+1}^\vee (p)$, where $v=t(e)$.
\label{fig:LoopedEnd}]{\quad\includegraphics[height=1.70cm]{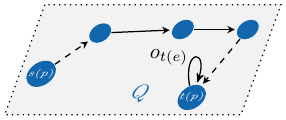}}
\caption{Illustrating the meaning of $Q^\circ$ and $v_j^\vee (p)$. \label{fig:LoopedBoth}}
\end{figure}

It will be convenient to use the following multi-index notation. Given
two ordered $q$-tuples, one $I=(i_1,\ldots,i_q) $ of indices
$i_1,\ldots, i_q\in \Zpos$, assumed to be increasingly ordered ($a<b
\Rightarrow i_a <i_b$), and another of vertices, $\mathbf
v=(v_1,\ldots, v_q) \in Q_0^q$, we define
\begin{align}
\bv_I^\vee:=( v_{q})^\vee_{i_q} \circ  ( v_{q-1}) ^\vee _{i_{q-1}} \circ \cdots
\circ ( v_{2})^\vee_{i_2} \circ
( v_{1})^\vee_{i_1}\,. \label{insertion_multiindex}
\end{align}
Whenever this does not yield the trivial path $E_{s(p)}$,
we get from a  loop $p\in \Omega Q$
of length $k$,
a new loop $\bv _I^\vee(p) \in \Omega Q^\circ$ of length $k+q$. If
the insertion \eqref{insertion_multiindex} yields the trivial
path, it can be ignored, since the next formulae select those of
positive length (recall Def. \ref{def:Wilson_and_Holonomy}).

\begin{example}[Notation for path insertions] \label{ex:notation_insertions}
Consider $Q$ with $Q_0=\{v,w\} $  and a single edge $e=(v,w)$.
All possible insertions to get a length-$4$ loop
in $Q^\circ$ out of the only  length-$2$ path $p\in \Omega Q$ based at $v$, $p=[e,\bar e]$,
are
\begin{align}\label{pathinsertions}
\runter{
\includegraphics[width=.8\textwidth]{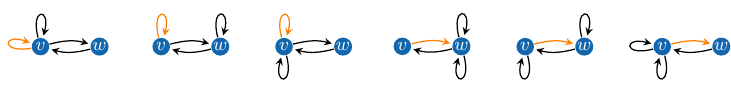}}
\end{align}
where the orange arrow is the first edge in the path (one later takes
the order induced by clockwise orientation, in case that more arrows
start from the same vertex). They correspond to
\begin{align*} \notag
(v,v)^{\,\,\vee}_{1,2}(p)\qquad\!
(v,w)^{\,\,\vee}_{1,3}(p)\qquad\!
(v,v)^{\,\,\vee}_{1,4}(p)\qquad\!
(w,w)^{\,\,\vee}_{2,3}(p)\qquad\!
(w,v)^{\,\,\vee}_{2,4}(p)\qquad
(v,v)^{\,\,\vee}_{3,4}(p)\,.
\end{align*}
\end{example}

\begin{lemma}\label{thm:Higgsa_od_kołczanów}
Let $R $ be a representation of a quiver $Q$ in prespectral triples
and let $\mathring R=(\mathring X_v,\mathring \Phi_e) \in
\ReppS(\mathring Q)$ extend $R$ in the sense that it coincides on all
objects, and for morphisms it satisfies $(\Brat_e, L_e)=\Phi(e)
=\mathring\Phi(e)=(\mathring \Brat_e,\mathring L_e)$ for each $e\in
Q$. Let $\mathring D=D_{\kol Q}(\kol L,\rho)$, where $\rho$ is a
graph-distance on $Q^\circ$.  Contributions to $\Tr (\mathring D^k)$
in terms of insertions of loops into existing paths in $Q$ read
\begin{align}
\Tr(\mathring D^k)= \Tr(D^k)+\sum_{q=1}^{k- 1}\,
\Bigg\{\sumsub{p\in \Omega Q \\ \ell (p)=k-q }
\,\,\,
\sumsub{I \in \{1,\ldots, k\}^q \\  i_a < i_b \,\text{when } a < b  }\,\,\,
\sumsub{\bv \in  Q_0^q  }
 \Wils{  [ \bv^\vee _I (p)  ]} \Bigg\}
+\sum_{v\in Q_0} \Tr_v (\varphi_v^k )
\label{SpectAction_shift_selfloops}
\end{align}
with $I=(i_1,\ldots, i_q)$, where for each $v$, $\varphi_v: H_v\to
H_v$ is the (self-adjoint) operator
\begin{align}\varphi_v:= \sumsub{e \in Q^\circ_1 \setminus Q_1 \\
t(e)=v=s(e)
}\frac{ 1}{{\rho(e)}}(\kol  L _{e}+ \kol  L _{e}^*) \,. \label{Higgs_scalar_def}\end{align}
If $p=[e_1,\ldots, e_{k-l}] \in \Omega Q$ and $\alpha(i) := i - \# \{j\in I: j<i \}$, the
Wilson loop reads
\begin{align*}
\Wils[ \bv_I^\vee (p)  ]= \Tr_{H_{s(p)}} ( b_1 b_2 \cdots b_k )
 \qquad
 \text{ with }
 b_i= \begin{cases}\displaystyle
    \frac{  L _{e_{\alpha(i)} }}{{\rho(e_{\alpha(i)})}}  &  i \notin I ,\\[3ex]
 \displaystyle  \frac{ 1}{{\rho(o_{v_i})}}(\kol  L _{o_{v_i}}+ \kol  L _{o_{v_i}}^*)   & i \in I.
     \end{cases}
\end{align*}
\end{lemma}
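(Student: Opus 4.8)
The plan is to reduce everything to the path-counting identity of Corollary~\ref{thm:Wksym} after first isolating the purely algebraic effect of the new self-loops. I would begin by comparing $\mathring D=D_{\kol Q}(\kol L,\rho)$ with $D=D_Q(L,\rho)$ as symmetrised weight matrices \eqref{defD}. Since augmentation leaves self-loops untouched ($Q\starcirc=(Q^\star)^\circ=(Q^\circ)^\star$) and each new self-loop $o_v$ contributes $b_{o_v}+b_{o_v}^*$ to the $(v,v)$-entry of \eqref{symm_weight_matrix_def} while touching no off-diagonal entry, one gets
\begin{align*}
\mathring D = D + \Delta, \qquad \Delta=\bigoplus_{v\in Q_0}\varphi_v,
\end{align*}
where $\varphi_v$ is exactly the self-adjoint operator \eqref{Higgs_scalar_def} and $\Delta$ is block-diagonal, $\Delta_{v,w}=\varphi_v\,\delta_{v,w}$. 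This block-diagonality is the structural fact that makes the whole computation combinatorial, and the self-adjointness of $\varphi_v$ guarantees $\mathring D$ is still a legitimate Dirac operator.

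Next I would expand $\Tr(\mathring D^k)=\Tr\big((D+\Delta)^k\big)$ into the $2^k$ words in the two non-commuting symbols $D$ and $\Delta$, grouping them by the number $q$ and the ordered position set $I=(i_1<\dots<i_q)\subseteq\{1,\dots,k\}$ of the $\Delta$-factors. The $q=0$ word is $\Tr(D^k)$, reproducing the first term. The unique $q=k$ word is $\Tr(\Delta^k)=\sum_{v\in Q_0}\Tr_v(\varphi_v^k)$ by block-diagonality, reproducing the last term. What remains is to match the intermediate words $1\le q\le k-1$ with the middle double sum.

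For an intermediate word I would expand it in matrix entries over vertices $v_0,\dots,v_{k-1}$ (indices cyclic). A $\Delta$ at position $i_a$ forces $v_{i_a-1}=v_{i_a}$ and inserts the factor $\varphi_{v_{i_a}}$, while a $D$ at position $j$ contributes $[\mathscr A\symmetr_Q(b)]_{v_{j-1},v_j}$, a sum over edges of $Q^\star$ from $v_{j-1}$ to $v_j$, exactly as in the proof of Corollary~\ref{thm:Wksym}. Deleting the $q$ stationary ($\Delta$-)steps therefore leaves a genuine loop $p$ of length $k-q$ in $Q^\star$, while the data $(I,\bv)$ with $\bv=(v_{i_1},\dots,v_{i_q})$ record precisely where and at which vertex the self-loops were re-inserted; this is the insertion $\bv^\vee_I(p)$ of \eqref{insertion_multiindex}. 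Replacing the forced vertices by the free sum over all $\bv\in Q_0^q$ is harmless, since any incompatible choice makes $\bv^\vee_I(p)$ the trivial path $E_{s(p)}$ and contributes $0$. Tracing and using cyclicity then turns each word into $\Wils[\bv^\vee_I(p)]$, with inserted factor $b_{i_a}=\varphi_{v_{i_a}}$ and the undeleted edges relabelled by $\alpha(i)=i-\#\{j\in I:j<i\}$, which is the stated Wilson-loop expression.

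The main obstacle I anticipate is the careful bookkeeping in this last step: verifying that the operator product read off from a word factors, slot by slot, exactly as the holonomy of $\bv^\vee_I(p)$---that $\alpha$ aligns the undeleted steps with the edges $e_1,\dots,e_{k-q}$ of $p$, that both orientations of a self-loop are captured by the single symmetric factor $\kol L_{o_v}+\kol L_{o_v}^*$ (precisely because self-loops are not augmented), and that the correspondence between words-with-forced-vertices and valid insertion triples $(p,I,\bv)$ is a bijection once trivial insertions are discarded. The self-adjointness of each $\varphi_v$ together with the reality of $\Wils\symmetr$ (cf. \eqref{Wbar=barW}) keeps these factors consistent; the rest is the same matrix-entry expansion already used for Proposition~\ref{thm:Wk} and Corollary~\ref{thm:Wksym}.
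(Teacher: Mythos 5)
Your proof is correct. The paper proceeds in the opposite direction: it applies the path formula (Corollary \ref{thm:Wksym}) directly to $\kol Q$, so that $\Tr(\kol D^k)$ is from the outset a sum of Wilson loops over length-$k$ loops in $Q\starcirc$, and then classifies each such loop by the number $q$ of new self-loops it contains; the cases $q=0$ and $q=k$ give the first and last terms, and for $0<q<k$ it argues that $(p,I,\bv)\mapsto\bv^\vee_I(p)$ and the deletion map $\bv^\wedge_I$ identify the non-trivial insertion triples bijectively with the remaining loops, trivial insertions contributing zero. You instead decompose $\kol D=D+\Delta$ with $\Delta=\oplus_v\varphi_v$ block-diagonal and expand $\Tr((D+\Delta)^k)$ into words, letting the positions of the $\Delta$-factors play the role of $I$ automatically; the matrix-entry expansion of each word then recovers exactly the same correspondence with insertion triples, and the free sum over $\bv\in Q_0^q$ is justified the same way (incompatible choices give the trivial path, whose Wilson loop vanishes by convention). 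The two arguments have identical combinatorial content; your operator-first bookkeeping makes the extreme terms and the grouping by $I$ immediate, at the cost of having to re-derive the path interpretation entry by entry, which the paper gets for free from Corollary \ref{thm:Wksym}. One small point in your favour: you correctly read the intermediate loops as living in $Q^\star$ (consistent with $\Tr(D^k)$ being a sum over $\Omega Q^\star$), whereas the lemma's displayed sum writes $\Omega Q$ loosely; your $\alpha$-relabelling and the treatment of the symmetric self-loop factor are exactly as in the paper.
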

\begin{remark}\label{rem:graphdistance}
Observe that if $Q$ itself did not have self-loops at $v$,
$\varphi_v = \frac{ 1}{{\rho(o_v)}}(\kol L _{o_v}+ \kol L _{o_v}^*)$
holds.  Also we anticipate that $\varphi$ plays the role of a
discretised scalar (whose aim is to model a Higgs) field. The graph
distance $\rho$ could play a role in the construction of physical
models, as the spectrum of $\{\varphi_v\}_{v \in Q_0}$ with the tacit
constant graph distance $\rho \equiv 1$ is in $[-2,2]$, whilst a
general graph distance allows a spectrum for $\{\varphi_v\}_{v \in
  Q_0}$ in $[-2/ \rho_{\text{\tiny min}}, 2 / \rho_{\text{\tiny min}}
]$, with $\rho_{\text{\tiny min}} =\min_{v\in Q_0} \rho( o_v) $.
Since $Q$ is to be understood as a `microscopic' model for space (for
which one requires to endow $Q$ with additional information like
edge-coloring, or other decorations), this minimum distance being
small delocalizes the spectrum extending it effectively to the real
line.
\end{remark}

\begin{proof}
Given a loop $p\in \Omega Q^{\star \!\!\!\circ}$ of length $k$, let
$q$ be the number of self-loops $o_v\in Q^\circ_1\setminus Q_1$ in
$p$. If $q=0$, then $p$ is a path in $Q$. The sum of all such paths is
precisely $ \Tr(D^k)$. In the other extreme, $q=k$, the path consists
of self-loops, but since $p$ has then no edge of $Q$, all such
self-loops are based at the same vertex.  All paths with $q=k$ yield $
\sum_{v\in Q_0} \Tr_v (\kol D_{v,v} ^k)$, which can be re-expressed in
terms of $ \varphi_v$ as in Eq.~\eqref{SpectAction_shift_selfloops} if
$\varphi_v$ is given by Eq.~\eqref{Higgs_scalar_def}. \par For the
rest of the cases, $0<q<k$, observe that even though $ (p, I,\bv )
\mapsto \bv_I^\vee (p)$ is not a bijection from $\Omega Q|_{\ell=k-q}
\times \{ I \in \{1,\ldots, k\}^q : i_a < i_b \,\text{ if } a < b \}
\times Q_0^q \to \Omega Q^\circ |_{\ell=k}$, the support of both sets
(i.e. the subdomains not yielding the trivial paths) coincides, and
that is enough for \eqref{SpectAction_shift_selfloops} to hold.
Indeed, for $p\in \Omega Q|_{\ell=k-q}$ (thus a nontrivial path),
either $ \bv_I^\wedge \circ \bv_I^\vee (p) =p$ or $\bv_I^\wedge \circ
\bv_I^\vee (p)$ is the trivial path (and does not contribute to the
sum). Conversely any length-$k$ path $ p^\circ$ in $Q^\circ$ can be
gained from a unique set of parameters $I$ and $\bv$ that correspond
to an insertion in a unique path $p \in \Omega Q|_{\ell=k-q}$ through
$p^\circ=\bv_I^\vee(p)$ with $p$ given by $p=\bv_I^\wedge
p^\circ$. The uniqueness guarantees no double nor multiple counting
while splitting the sum over $p\in \Omega Q^\circ|_{\ell=k}$ in the
three sums in Eq.~\eqref{SpectAction_shift_selfloops}. To obtain the
reported holonomies, if $i \in I$ this means that we inserted a loop
$o_{v_i}$, and in this case we obtain for the holonomy the sum
$(L+L^*)/\rho$ evaluated at $o_{v_i}$. Else, if $i\notin I$, we have
to evaluate $L/\rho$ at an edge that has been shifted by as many
self-loop insertions from indices of $I$ have been performed before,
resulting in $L_{e_{\alpha(i)} } /\rho(e_{\alpha(i)})$ for $\alpha(i)
= i - \# \{j\in I: j<i \}$ indeed. \end{proof}

\begin{notation} \label{not:abbrev}
The following abbreviations, $\mathbf e_{-i} = -\mathbf e_{i}$ for
$i>0$ and $v_j=v+a \cdot \be_j$ for $|j| \in \{1,\ldots, d\}$, where
$a$ is the lattice space, will be practical; further, if a
representation is implicit, $L_j(v)=L_{(v,v_j)}$.
\end{notation}

\begin{proposition}\label{thm:YMH_nonexplicitform}
Consider $O_m^d:=\kol T_m^d$ and a representation $\kol R \in \Rep^N_\pS (O_m^d)$.  Let $\kol D$ be the Dirac operator with respect to $\kol R$. Then for $m\geq 4, d\geq 2$,
\begin{subequations}%
\begin{align*}%
\Tr(\kol D^0)&=m^d \times N \,, \\
\Tr(\kol D^2)&= m^d \times (2d)\times N  +\sum_{v \in \Z_m^d}  \Tr _v (\varphi_v^2)  \,,  \\
\Tr(\kol D^4)&=  (8d^2-2d) m^d N
+ \sum_{v \in \Z_m^d} \bigg\{   \sumsub{P \in
  \Omega_v(T^d_m) \\ {\mathrm{\tiny plaquettes}}} \Tr \big[ \hol_L (P)\big]
   + \Tr _v (\varphi_v^4) \\
 & \qquad\qquad   \qquad\qquad+  8 d \Tr_v (\varphi_v^2)
  + \sumsub{ |j|=1 \\ j\in\Z }^d 2\Tr_v [  \varphi_v L_{ j} (v) \varphi_{v_j} L_{ j}^* ( v_j )  ]  \bigg\}\notag  \,,
\end{align*}%
\end{subequations}%
cf. Notation \eqref{not:abbrev} (also observe that  $j$ can be negative in the last sum).
\end{proposition}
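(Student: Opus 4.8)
The plan is to obtain all three traces from the self-loop--free values computed in Proposition \ref{thm:TracesTorusD} by feeding them into the insertion formula of Lemma \ref{thm:Higgsa_od_kołczanów}, applied with base quiver $Q=T_m^d$ and $\kol Q=O_m^d$. That lemma splits $\Tr(\kol D^k)$ into three kinds of contributions: the $q=0$ piece $\Tr(D^k)$ (loops using no self-loop, already known from Prop.~\ref{thm:TracesTorusD}); the $q=k$ piece $\sum_v\Tr_v(\varphi_v^k)$ (loops made only of self-loops at a single vertex); and the intermediate pieces $1\le q\le k-1$, each counting the insertions $\bv_I^\vee(p)$ of $q$ self-loops into a length-$(k-q)$ loop $p$ of $Q$. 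The whole proof thus reduces to quoting Prop.~\ref{thm:TracesTorusD} and evaluating the intermediate insertion sums, which is where the actual work sits.

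For $k=0$ the claim is immediate: $\Tr(\kol D^0)=\dim H=\#Q_0\cdot N=m^d N$. For $k=2$ the only intermediate value is $q=1$, which needs a loop $p\in\Omega Q$ with $\ell(p)=1$; since $T_m^d$ has no self-loops it has no length-$1$ loops, so that sum is empty. Only $q=0$ and $q=2$ survive, giving $\Tr(\kol D^2)=\Tr(D^2)+\sum_v\Tr_v(\varphi_v^2)=2d\,m^d N+\sum_v\Tr_v(\varphi_v^2)$ by \eqref{Tr2DTorus}.

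For $k=4$ the terms $q=1,3$ require length-$3$ and length-$1$ loops in $T_m^d$; as the lattice is bipartite it carries no odd loops except those winding a full cycle, which for $m\ge 4$ are longer than $3$, so both terms vanish. The $q=0$ term is $\Tr(D^4)$ from \eqref{Tr4DTorus}, supplying $(8d^2-2d)m^d N$ and the plaquette sum, while the $q=4$ term is $\sum_v\Tr_v(\varphi_v^4)$. The heart is the $q=2$ term: insert two self-loops into each length-$2$ loop of $Q$. These base loops are precisely the $2d$ ``out-and-back'' loops $p=[e,\bar e]$ at each vertex $v$, one per direction $j\in\{\pm1,\dots,\pm d\}$, with first weight $L_j(v)\colon H_v\to H_{v_j}$ and return weight $L_j(v)^*$ (Notation~\ref{not:abbrev}). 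For fixed $(v,j)$ there are $\binom{4}{2}=6$ insertions $I$, as in Ex.~\ref{ex:notation_insertions}; I will list the six and read off each holonomy from Lemma~\ref{thm:Higgsa_od_kołczanów}, using unitarity $L_j(v)L_j(v)^*=1$ repeatedly. Three of them ($I=(1,2),(1,4),(3,4)$) collapse to $\Tr_v(\varphi_v^2)$, one ($I=(2,3)$) to $\Tr_{v_j}(\varphi_{v_j}^2)$, and the last two ($I=(1,3),(2,4)$) to the hopping term $\Tr_v[\varphi_v L_j(v)\varphi_{v_j}L_j(v)^*]$.

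It then remains to sum over $v\in Q_0$ and the $2d$ directions $j$. The three $\Tr_v(\varphi_v^2)$ insertions contribute $6d\sum_v\Tr_v(\varphi_v^2)$, while the $\Tr_{v_j}(\varphi_{v_j}^2)$ insertion, after the reindexing $v\mapsto v_j$ (a bijection of the torus for each fixed $j$), adds a further $2d\sum_v\Tr_v(\varphi_v^2)$; together these reproduce the coefficient $8d$. The two hopping insertions give $\sum_v\sum_{|j|=1}2\,\Tr_v[\varphi_v L_j(v)\varphi_{v_j}L_j(v)^*]$, with $L_j^*(v_j)$ understood as the return weight $L_j(v)^*$, matching the last displayed term. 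Assembling the contributions $q=0,2,4$ yields the stated expression for $\Tr(\kol D^4)$. The main obstacle is the bookkeeping of the $q=2$ case: attaching each of the six self-loop positions to its correct base vertex ($v$ versus $v_j$), recognising through cyclicity and unitarity which insertions degenerate into $\Tr(\varphi^2)$-terms, and organising the direction sum (including negative $j$, for which $L_j(v)$ denotes the reversed-edge weight) so that the two hopping insertions combine into the prefactor $2$.
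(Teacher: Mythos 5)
Your proposal is correct and follows essentially the same route as the paper: quote Proposition \ref{thm:TracesTorusD} for the self-loop--free pieces, apply the insertion formula of Lemma \ref{thm:Higgsa_od_kołczanów}, note that the intermediate terms vanish except for $q=2$ at $k=4$, and then enumerate the six insertions into each out-and-back loop exactly as in Example \ref{ex:notation_insertions}, obtaining the same $3+1$ degenerate insertions (hence the $8d$ coefficient after reindexing $v\mapsto v_j$) and the two hopping insertions giving the prefactor $2$. The only cosmetic difference is that you justify the vanishing of the $q=1,3$ terms via the absence of odd-length loops explicitly, where the paper states it more tersely.
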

\begin{proof}
Since $\Tr(\kol D^0)$ only sees the vertices, the result is the same
as for $\Tr(D^0)$, where $D$ the Dirac operator of the restriction of
$\kol R$ to $O_m^d$.  \par For positive powers $k$, we use the path
formula to find $\Tr (\kol D^k)$.  If $k=2$, then notice that the
integer $0<q<k$ in the Formula \eqref{SpectAction_shift_selfloops}
cannot be $q=1$, since removing one self-loop cannot yield a closed
path (which should consist of $k-q$ edges of the lattice $T^d_m$
without self-loops).  Thus tracing the square of the Dirac operator
splits only as the contributions from $D$ on $T^d_m$ and contributions
purely of self-loops, which is the new term $\sum_{v\in Q_0 } \Tr _v
(\varphi_v^2)$.  \par For $k=4$ the middle sum over path insertions,
Formula \eqref{SpectAction_shift_selfloops}, forces $q=2$.  To
evaluate this term, we introduce some notation.  Given any path $p\in
\Omega (O_m^d)^\star=\Omega (T^d_m) \starcirc $ and two vertices
$v_1,v_2\in \Z^d_m$, let $\delta_{v_1,v_2} p =E_{s(p)}$ (the trivial
path at $s(p)$) if $v_1\neq v_2$ and $\delta_{v_1,v_2} p=p$ if
$v_1=v_2$.  Analogously, for two integers, $i_1,i_2$, to wit
$\delta_{i_1,i_2} p $ is the trivial path at $s(p)$ if those integers
do not coincide and the path itself it they do. Contributions to the
mentioned $q=2$ term come from length-$2$ paths (since $2=k-q$ here)
on the lattice without self-loops. For a fixed vertex $v$, any such
path is of the form $p=(v,v_{\pm j})$ with $v_{\pm j} = v \pm \mathbf
e_1$ for some $j=1,\ldots, d$.  We fix the path $p=[e,\bar e]$ with
$e=(v,w_{\pm j})$, keeping in mind the dependence on the sign and on
$j$.  For $w$ a nearest neighbour of $v$, i.e. $w=v_{\pm j}$, we have
for any $(x,y)\in \Z^d_m\times \Z^d_m $, and $I=(i_1,i_2)$
\begin{align*} \notag
(x,y)^\vee_I  ( v,w ) & =
\delta_{x,v}\delta_{y,v }\delta_{1,i_1}\delta_{2,i_2} [ o_v ,o_v,  e,  \bar e ]
+
\delta_{x,v}\delta_{y,w}
\delta_{1,i_1}\delta_{3,i_2}
[ o_v , e, o_w ,\bar e]  \\
\notag
& +
\delta_{x,v}\delta_{y,v} \delta_{1,i_1}\delta_{4,i_2}
[ o_v, e, \bar e  ,o_v ]
 + \delta_{x,w}\delta_{y,w} \delta_{2,i_1}\delta_{3,i_2}
 [ e,o_w,o_w, \bar e ]
 \\\notag
&+\delta_{x,w}\delta_{y,v}\delta_{2,i_1}\delta_{4,i_2} [ e,o_w,\bar  e,o_v ]+
\delta_{x,v}\delta_{y,v} \delta_{3,i_1}\delta_{4,i_2}[ e, \bar e, o_v,o_v ]\,,
\end{align*}
which can be matched to the paths in \eqref{pathinsertions} in that order.
Thus the sum in question reads
\begin{align*} \nonumber
\sumsub{p\in \Omega Q \\ \ell (p)=2 }
\,\,\,
\sumsub{I \in \{1,\ldots, k\}^2 \\  i_1 < i_2 }\,\,\,
\sumsub{ (x,y) \in  \Z_m^d\times \Z_m^d}
 \Wils{ [ (x,y)^\vee_I(p)]}
 =\sumsub{j=1\\ \varepsilon = \pm }^d
 3 W_1(v,j, \varepsilon)
 +2 W_2(v,j,\varepsilon) +
 W_3(v,j,\varepsilon).
  \end{align*}
Here, if $e_j=(v,v+\mathbf e_j)$, the six paths yield three types of contribution:
 \begin{align*}
 W_1(v,j,\pm) & =\Tr (\varphi^2_v),\quad &&&
 W_2(v,j,+) & =\Tr (\varphi_v L_{e_j} \varphi_{v_{+j}} L_{e_j}^* ),\quad \\
 W_2(v,j,-) &=\Tr (\varphi_v L_{e_j}^* \varphi_{v_{-j}} L_{e_j} ),\quad & &&
 W_3(v,j,\pm)& =\Tr (\varphi_{v_{\pm j}} ^2 ).
\end{align*}
The $ W_2(v,j,\pm)$ terms contribute the sum for $ \pm j \in
\{1,2,\ldots, d\} $ (expressed above over $|j|$) as listed at the very
end of the last formula of this proposition.  Translation invariance
allows to group the $3W_1$ and $W_3$ terms, and since $j$ takes $2d$
values, one obtains the $8d \Tr \varphi_v^2$ term. At risk of being
redundant, we recall that the quartic term there, $\Tr \varphi_v^4$,
is monic since there is a unique length-$4$ path consisting of
self-loops (or recall cf. Lemma \ref{thm:Higgsa_od_kołczanów} with
$k=4$).
\end{proof}
 \section{Applications to gauge theory}\label{sec:physics}

 \subsection{From the lattice to the theory in the continuum}\label{sec:QgaugeTh}

    Given a $\Lambda>0$, the (bosonic) \textit{spectral action}
    \cite{Chamseddine:1996zu} at \textit{scale} $\Lambda$ of a given
    finite spectral triple $(A,H,D)$ is $\Tr [ f(D/\Lambda) ]$.  Its
    evaluation is possible in the finite-dimensional case for a
    polynomial $f:\R\to \R$ (in contrast to other settings where $f$
    is required to be a bump function), which is that of our quivers.
\begin{lemma}\label{thm:YMH_almostexplicit}
With some abuse of notation let $Q=O^d_m$ denote also the quiver with
lattice space $a>0$, that is $\rho(e)=a $ on edges $e$ that are not
self-loops, instead of the unit lattice space (and otherwise under the
same assumptions) of Proposition \ref{thm:YMH_nonexplicitform}.  For
$f(x)=\sum_{k=0}^4 f_k x^k$ the spectral action of a quiver
representation of $Q$ at the scale $\Lambda=1/a$ is real-valued and
reads \vspace{-.4ex}
  \begin{align} \label{costam_phiquadrat}
\Tr f(D/\Lambda)&=  m^d N [  f_ 0 + 2d\cdot f_2 +(8 d^2 -2 d) f_ 4]  +  f_4  \sum_{v \in \Z_m^d}
\sumsub{ p \in \Omega_v(T^d_m) \\ {\mathrm{
      plaquettes}}}  \Tr_v\big[ \hol_L (p)\big]     \\[-1.5ex] & + a^2
\sum_{v \in \Z_m^d} \Tr _v \bigg\{  ( f_2 +8 d\cdot f_4 ) \varphi_v^2  + 2 f_4  \sum_{j=1}^d \big [   \varphi_v L_{j}(v) \varphi_{v_j} L_{j}(v)^*
+  \varphi_v L_{ {-j}} (v)\varphi_{v_{-j}} L_{  {-j}}(v_{-j}) ^*\big] \bigg\} \notag \\[1ex]
&
   +a^4  \sum_{v \in \Z_m^d}  f_4\Tr _v (\varphi_v^4)\,. \hspace{7.84cm}  [\text{cf. Notation \eqref{not:abbrev}}]
 \notag
\end{align}%
\end{lemma}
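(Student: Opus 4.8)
The plan is to exploit that $f$ has degree $4$, so with $\Lambda=1/a$ the spectral action collapses to a finite combination of heat-trace-like coefficients,
\[
\Tr f(D/\Lambda)=\sum_{k=0}^{4} f_k\,\Lambda^{-k}\,\Tr(D^{k})=\sum_{k=0}^{4} f_k\,a^{k}\,\Tr(D^{k}),
\]
where $D=D_{O_m^d}(\mathring L,\rho)$ now carries the spacing $\rho(e)=a$ on the non-self-loop edges, while $\rho(o_v)=1$ so that $\varphi_v=\mathring L_{o_v}+\mathring L_{o_v}^{*}$ is independent of $a$ (exactly as in Prop.~\ref{thm:YMH_nonexplicitform}). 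Thus everything reduces to computing the five traces $\Tr(D^{k})$, $k=0,\dots,4$, of the spacing-$a$ operator and recombining them.

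The single idea that organises the computation is a power-counting in $a$. By Corollary~\ref{thm:Wksym} and Lemma~\ref{thm:Higgsa_od_kołczanów}, $\Tr(D^{k})$ is a sum over length-$k$ closed paths in $(T_m^d)\starcirc$, and by \eqref{SpectAction_shift_selfloops} each lattice edge of such a path contributes the weight $\mathring L_e/\rho(e)=\mathring L_e/a$ whereas each self-loop contributes the $a$-independent factor $\varphi$. Hence a loop with exactly $q$ self-loops (and therefore $k-q$ lattice edges) carries the scalar factor $a^{-(k-q)}$, and after the prefactor $f_k a^{k}$ its net contribution to $\Tr f(D/\Lambda)$ picks up the power $a^{k}\cdot a^{-(k-q)}=a^{q}$: \emph{the power of $a$ counts the number of self-loops in the loop}. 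Crucially, the enumeration of which loops occur, with what multiplicities and which Wilson values, does not depend on the magnitude of the weights; it is verbatim the enumeration already performed in Prop.~\ref{thm:YMH_nonexplicitform}, so I may simply re-use that decomposition and re-weight it by the correct $a^{q}$.

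Transcribing Prop.~\ref{thm:YMH_nonexplicitform} term by term then gives the result. The self-loop-free contributions ($q=0$) at $k=0,2,4$ are $a$-independent and assemble into the bracket $m^{d}N[f_0+2d f_2+(8d^2-2d)f_4]$, while the $k=4$ plaquettes (still $q=0$) give the $a$-independent $f_4\sum_{v}\sum_{\mathrm{plaq}}\Tr_v\hol_L(P)$. The loops with $q=2$ all carry $a^{2}$; here the purely self-loop length-two loops from $k=2$ contribute $f_2\sum_v\Tr_v\varphi_v^{2}$, which merges with the $8d f_4\sum_v\Tr_v\varphi_v^{2}$ coming from the corresponding $k=4$ loops into the single coefficient $(f_2+8d f_4)$, and the remaining $k=4$, $q=2$ loops (two lattice edges and two self-loops) reproduce the gauge--Higgs coupling $2f_4\sum_{j=1}^{d}[\cdots]$ of \eqref{costam_phiquadrat}. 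Finally the unique length-four path built solely from the self-loop at a vertex ($q=4$) carries $a^{4}$ and yields $f_4 a^{4}\sum_v\Tr_v\varphi_v^{4}$. Reality of the whole expression is automatic because $D=\mathscr A_Q\symmetr$ is self-adjoint, so $\Tr(D^{k})\in\R$ (equivalently by \eqref{Wbar=barW}).

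The one place I expect genuine subtlety is the odd powers $k=1,3$. Unlike the self-loop-free lattice of Prop.~\ref{thm:TracesTorusD}, the quiver $O_m^d$ does carry odd closed loops (the self-loop itself, or $[o_v,e,\bar e]$ and its insertions), so $\Tr(D)$ and $\Tr(D^{3})$ need not vanish; by the same power-counting they would contribute terms of the schematic form $f_1 a\sum_v\Tr_v\varphi_v$ and $f_3$-multiples of $\sum_v\Tr_v\varphi_v^{3}$ (plus an $a\sum_v\Tr_v\varphi_v$ piece from the $[o_v,e,\bar e]$-type loops), all of them \emph{odd} in the Higgs field $\varphi$. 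Since these do not appear in \eqref{costam_phiquadrat}, the statement is to be understood with the standard spectral-action convention that $f$ is even ($f_1=f_3=0$); with that convention only $k=0,2,4$ survive and the assembly above is exactly \eqref{costam_phiquadrat}. I would state this evenness explicitly, as it is the only ingredient not already furnished by Prop.~\ref{thm:YMH_nonexplicitform}. The parity selection used above ($q\equiv k\bmod 2$, because the $k-q$ lattice edges must close up into an even-length walk on the bipartite torus) is what guarantees that for even $k$ only $q\in\{0,2,\dots,k\}$ occur, matching the three displayed blocks.
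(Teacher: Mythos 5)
Your proof is correct and follows essentially the same route as the paper's: both reduce the lemma to Proposition~\ref{thm:YMH_nonexplicitform} by reinstating the lattice spacing and reordering, and your power-counting rule (the net power of $a$ in a loop's contribution equals its number of self-loops, since $a^{k}\cdot a^{-(k-q)}=a^{q}$) is just a transparent bookkeeping of that rescaling. Two small differences are worth recording. For the reality claim you invoke self-adjointness of $D=\mathscr{A}_Q\symmetr$, which settles it in one line; the paper instead argues that the two mixed terms $\Tr_v(\varphi_v L_{j}(v)\varphi_{v_j}L_{j}(v)^*)$ and $\Tr_v(\varphi_v L_{-j}(v)\varphi_{v_{-j}}L_{-j}(v_{-j})^*)$, which are not individually hermitian, find their conjugates when the outer sum over $Q_0$ hits the translated vertices $v_{\pm j}$ --- both arguments are valid, yours being the quicker one for the stated claim. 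Finally, your caveat about the odd coefficients is a genuine and correct observation that the paper passes over: since $O_m^d$ carries self-loops, $\Tr(D)=\sum_v\Tr_v(\varphi_v)$ and $\Tr(D^3)$ are generically non-zero, so the displayed formula is exact only under the implicit convention $f_1=f_3=0$; making that evenness hypothesis explicit, as you do, is a refinement of the statement rather than a gap in your proof.
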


\begin{proof}This is a consequence of rewriting Proposition \ref{thm:YMH_nonexplicitform}
after replacement of the new lattice space, and reordering.  It is
noteworthy that the self-adjointness of the argument of the traces in
the spectral action is not explicit for the two terms that mix
$\varphi$ with $L$, namely $\Tr(\varphi_v L_{e_j} \varphi_{v_j}
L_{e_j}^* )$ and $ \Tr (\varphi_v L_{e_{-j}} \varphi_{v_{-j}} L_{
  e_{-j}}^*)$.  Although these are in general not mutual hermitian
conjugates, the action is real, since the respective hermitian
conjugate terms come from paths based at translated vertices $v_j= v+a
\mathbf e_j$ and $v_{-j}= v-a \mathbf e_j$, when the outer sum takes
those values; see Figure \ref{fig:hermicity_Hgaugesector}. \qedhere
\begin{figure}
 \begin{align*}
\runter{\includegraphics[width=3cm]{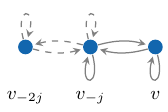}} \qquad\qquad
\runter{\includegraphics[width=3cm]{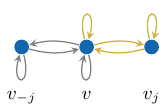}}\qquad\qquad
\runter{\includegraphics[width=3cm]{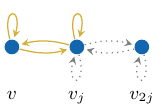}}
\notag
\end{align*}
\caption{ Each of the three panels shows length-$4$ paths based at the
  vertex of the middle.  For a fixed $j=1,\ldots,d$, pairing the two
  paths on the $j$-th axis around $v$ (in the middle panel) with the
  rightmost and leftmost solid-colour paths of the same colour of the
  other panels, implies that the spectral action is explicitly real
  valued. The left and right figures appear when the sum over $Q_0$
  takes the values $v_{\pm n j}=v\pm n a \mathbf e_j$ (self-loops
  directed upwards or downwards only for sake of visualisation).
\label{fig:hermicity_Hgaugesector}}
\end{figure}

\end{proof}
\noindent
We analyse the spectral action in the limit of small $a$ and large $m$
of the vertices $(a \Z/m a\Z)^d$ of the quiver $O_m^d$ is now
obtained. This torus $\mathbb T^d$ conventionally will have volume
$(am)^d$, without $2\pi$-factors.

\begin{theorem}[The smooth limit] \label{thm:YMH_explicitandsmooth}
For $Q= O_m^d$ assume that the representation $R \in \Rep^N_\pS(Q) $
that yields the spectral action in Lemma \ref{thm:YMH_almostexplicit}
has $(A_{v_0},H_{v_0})=(\M N,\C^N)$ for some $v_0\in Q_0$.  Then in
the limits of the lattice space $a\to 0^+ $ and the vertex number
$m\to \infty$, that action reads
\begin{align}
\Tr f(D/\Lambda) &= \Lambda^{d}N [ f_0 + 2d\cdot f_2 +(12d^2-6d) f_4 ]
\vol(\mathbb T^d ) 
- 2 \Lambda^{d-4} f_4
\int_{\mathbb T^d} \sum_{i,j=1}^d\Tr_N (F_{ij}^2 ) \dif ^d
x \label{smooth_SpAct} \\ &-\int_{\mathbb T^d} \Tr_N \Big\{
\Lambda^{d-4 } 2 f_4 \sum_{j=1}^d (D_j h)^2 -\Lambda^{d-2} ( f_2 +16
d\cdot f_4 ) h^2 - \Lambda^{d-4} f_4 h^4 \Big \}\dif ^d x+ O
(\Lambda^{d-5}) \,,\notag
\end{align}where $\Lambda:=1/a$,  and $h, A_j$ are hermitian $\M{N}$-valued fields on $\mathbb T^d$. Here
\begin{align}
\label{defDjFij_smooth}
D_j h := \partial_j h + [\ii A_j, h]\quad\text{ and }\quad F_{ij}:=
\partial_i A_j - \partial_j A_i + \ii [A_i, A_j] \qquad \text{ for }
i,j=1,\ldots, d\,.
\end{align}
\end{theorem}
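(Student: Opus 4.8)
The plan is to start from the fully explicit lattice expression of Lemma~\ref{thm:YMH_almostexplicit} and let $a=1/\Lambda\to 0^{+}$ while $m\to\infty$ with $am$ fixed, so that the vertex set $(a\Z/am\Z)^d$ fills the torus $\mathbb T^d$ of volume $(am)^d$. First I would record that the hypothesis $(A_{v_0},H_{v_0})=(\M N,\C^N)$ together with the connectedness of $T_m^d$ and the Bratteli-network constraints of Definition~\ref{def:Bratteli_sieć} force $(A_v,H_v)=(\M N,\C^N)$ at \emph{every} vertex: along each edge the biadjacency matrix $C_e$ must send the single simple summand of dimension $N$ to itself, so $C_e=(1)$ and every transition $L_e$ is a genuine element of $\uni(N)$. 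This licenses the standard lattice-gauge parametrisation $L_j(v)=\exp(\ii a A_j(v))$ with $A_j\colon\mathbb T^d\to\M N$ hermitian, and the identification of the self-loop field $\varphi_v$ of \eqref{Higgs_scalar_def} with a sampled smooth hermitian field $h(x_v)$, $x_v=av$. The three traces $\Tr(\kol D^k)$, $k=0,2,4$ (odd powers vanish for lack of odd loops), are then polynomials in $a$ through these data.

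For the gauge sector I would expand the plaquette holonomy. Writing $P_{ij}(v)$ as the ordered product $L_i(v)L_j(v_{+i})L_i(v_{+j})^{*}L_j(v)^{*}$ and applying Baker--Campbell--Hausdorff gives $\hol_L(P_{ij}(v))=\exp\!\big(\ii a^2 F_{ij}(v)+O(a^3)\big)$ with $F_{ij}$ as in \eqref{defDjFij_smooth}; hence $\Tr_v[\hol_L(P_{ij})]=N-\tfrac{a^4}{2}\Tr_N(F_{ij}^2)+O(a^5)$. The $N$-parts, summed over the $4d(d-1)$ plaquettes based at each vertex and over the $m^d=\Lambda^d\vol(\mathbb T^d)$ vertices, contribute $f_4\,4d(d-1)\,\Lambda^d N\vol(\mathbb T^d)$, which is exactly the shift $(8d^2-2d)\mapsto(12d^2-6d)$ of the cosmological coefficient. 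The quadratic-in-$F$ parts, after replacing $\sum_v$ by $a^{-d}\int_{\mathbb T^d}\dif^d x$ and summing over plaquette orientations (using $F_{ij}=-F_{ji}$ to cancel the odd remainders and double the even ones), assemble into $-2\Lambda^{d-4}f_4\int_{\mathbb T^d}\sum_{i,j}\Tr_N(F_{ij}^2)\dif^d x$.

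For the Higgs sector I would treat the remaining families of terms in \eqref{costam_phiquadrat}. The mass and kinetic pieces both originate from the hopping terms $\Tr[\varphi_v L_j\varphi_{v_j}L_j^{*}]$ via the covariant-difference identity
\begin{align*}
\Tr\big[(\varphi_v-L_j\varphi_{v_j}L_j^{*})^2\big]=\Tr\varphi_v^2+\Tr\varphi_{v_j}^2-2\Tr[\varphi_v L_j\varphi_{v_j}L_j^{*}]
\end{align*}
together with $\varphi_v-L_j\varphi_{v_j}L_j^{*}=-a\,D_j h+O(a^2)$, so that, after summing over $v$ (where periodicity gives $\sum_v\Tr\varphi_{v_j}^2=\sum_v\Tr\varphi_v^2$) and over $\pm j$, the $O(a^0)$ parts collapse onto $\sum_v\Tr\varphi_v^2$ while the $O(a^2)$ part produces $\sum_v\Tr_N[(D_j h)^2]$; equivalently one expands $L_j\varphi_{v_j}L_j^{*}=\varphi_v+a\,D_j h+\tfrac{a^2}{2}D_j^2 h+\cdots$ and integrates by parts covariantly. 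Combining the $O(a^0)$ contributions of all $2d$ hopping terms with the explicit $(f_2+8d\,f_4)\varphi_v^2$ term yields the full $h^2$ mass coefficient; the kinetic term then appears one order higher in $a$ as $-2\Lambda^{d-4}f_4\int_{\mathbb T^d}\sum_j\Tr_N(D_j h)^2\,\dif^d x$, and the monic term $a^4 f_4\sum_v\Tr\varphi_v^4$ converts directly into $+\Lambda^{d-4}f_4\int_{\mathbb T^d}\Tr_N h^4\,\dif^d x$. The reality of the mixed $\varphi$--$L$ contributions is the pairing of $+j$ and $-j$ terms at translated base points already established in Lemma~\ref{thm:YMH_almostexplicit} and Figure~\ref{fig:hermicity_Hgaugesector}.

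The main obstacle, and the step deserving the most care, is twofold. Analytically, one must justify the replacement of lattice sums by integrals and control every remainder uniformly, so as to certify the stated $O(\Lambda^{d-5})$ error: this requires the lattice link- and self-loop-data to be samples of fixed smooth fields $A_j,h$ on $\mathbb T^d$ (or, conversely, a construction of such interpolants) together with a Riemann-sum estimate using derivative bounds. Combinatorially, the delicate part is the exact bookkeeping of numerical coefficients — tracking how the plaquette orientations feed the $F^2$ term, and how the several $\varphi^2$ fragments (the explicit quadratic term together with the leading parts of all hopping terms) add up into a single mass coefficient — since it is precisely here that numerical factors are easily mislaid.
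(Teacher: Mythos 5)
Your proposal follows essentially the same route as the paper: the Bratteli-network rigidity forcing $(\M N,\C^N)$ at every vertex of the connected lattice, the parametrisation $L_j(v)=\exp(\ii a A_j(v))$, the Baker--Campbell--Hausdorff expansion of the plaquette holonomy giving $\Tr_v[\hol_L(P_{ij})]=N-\tfrac{a^4}{2}\Tr_N(F_{ij}^2)+O(a^5)$ with the same orientation/translation bookkeeping of the factor $4d(d-1)$, and the covariant-difference identity $\Tr\big[(\varphi_v-L_j\varphi_{v_j}L_j^{*})^2\big]=\Tr\varphi_v^2+\Tr\varphi_{v_j}^2-2\Tr[\varphi_vL_j\varphi_{v_j}L_j^{*}]$ splitting the hopping terms into mass and kinetic pieces are precisely the steps of the paper's proof. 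The only part you leave unexecuted is the final arithmetic merging the explicit $8d\,f_4$ quadratic term with the leading parts of the $2d$ hopping terms into the stated $h^2$ coefficient, which you correctly single out as the error-prone step.
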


\begin{proof}
Since the vertices of $Q$ are modeled by $\Z_m^d$, using the mod-$m$
arithmetic one has always both-way paths between any $v\in Q_0$ and
$v_0$ in $Q$ (there is no necessity to see these paths in the
augmented quiver, where of course, they exist). Hence the only
possible Bratteli network has $\bn_v =N,\br_v=1$ at each vertex, due
to Eq.~\eqref{both-way}, so $(A_v,H_v)=(\M N, \C^N)$ for all $v\in
Q_0$.  Define for each $v$ a hermitian matrix $A_j(v) \in \M{N}$ by
\begin{align} \exp [ \ii a A_j (v) ]:= L_{ (v, v+a  \be_j) } (v) =:L_{j} (v) ,  \text{  where }   j>0
\, . \label{defAj}\end{align} where the rightmost is an abbreviation.
In order to identify the gauge-Higgs sector define first $ \Delta_i
\varphi_v:= \tfrac1a [ \varphi(v+ a\be_i ) -\varphi(v) ]$. Comparing
the two paths of different colors, \vspace{-2.002ex}
\[  \runter{\includegraphics[width=2.2cm]{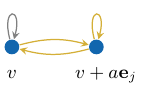}} \]
motivates us to define the covariant derivative
of matrix fields $\varphi$ in the lattice by
\begin{align*}
  (D_j\latt \varphi   ) (v) :\!& =  \frac1a \big \{ L_{j} (v) \varphi_{v + a\be_j } [ L_{j}  (v )]^* - \varphi_v \big\} \in \End( H_v ) ,
 \\   [ (D_j\latt \varphi)  (v) ]^*& =
 D_j\latt \varphi (v)   .
\end{align*}
Despite being different from known lattice-derivatives
(cf. \cite[Secs. 3 \& 6]{montvay_muenster_1994}) by Eq.~\eqref{defAj}
this definition is seen to be correct, as in the $a \to 0$ limit, one
obtains \[ (D_j\latt \varphi ) (v) = \Delta_j \varphi_v + [ \ii
  A_j(v),\varphi_v] + O (a^2)\,, \] which yields in the smooth limit
the covariant derivative in Eq.~\eqref{defDjFij_smooth}.  This allows
us to identify, recalling that $v_{\pm j} $ abbreviates $v \pm a
\be_j$,
 \begin{align*}
\mathfrak
r_{v,j}&:=2 \Tr(\varphi_v L_{ j} (v) \varphi_{v_j} L_{ j}(v)^* ) , \\ 
\mathfrak l_{v,j} & := 2\Tr \big [\varphi_v L_{-j}(v) \varphi_{v_{-j}} L_{
  {-j}}^*(v_{-j}) \big]
  =  2 \overline{\Tr_{v_{-j}} \big  (
\varphi_{v_{-j}} L_{ j} (v_{-j})\varphi_v L_{ j}^*(v_{-j})
\big) }
= \overline{
  \mathfrak r_{v{-a \be_j}, j }} =   \mathfrak r_{v{-a \be_j}, j } ,
 \end{align*}
 with their smooth counterparts. The cyclicity of the trace and the
 facts that $H_{v}$ and $ H_{v_{-j}}$ are isomorphic, along with
 Property \eqref{Wbar=barW} have been used in the last line (which
 also can be deduced from the middle panel in
 Fig.~\ref{fig:hermicity_Hgaugesector} that depicts the traced terms
 in $\mathfrak r_{v,j}$ and $\mathfrak l_{v,j}$) along with the fact
 that these terms are of the form $\Tr ( \mathfrak a u \mathfrak b
 u^*) $, for $\mathfrak a =\mathfrak a^*, \mathfrak b=\mathfrak b^*
 \in\M N, u \in \uni(N)$, and thus real. One finds
\begin{align}
  \Tr \big [ (  D_j\latt \varphi   )^* (  D_j\latt \varphi)  \big]  =
    \Tr \big [
     \varphi_{\balitabaja +j}   ^2    - 2\varphi L_j^* \varphi_{\balitabaja+j} L _j +\varphi^2     \big ]  = -\mathfrak r_{\balitabaja, j }  +
       \Tr (
     \varphi_{\balitabaja+j}   ^2  + \varphi^2 ), \qquad
\label{gdziescostam}
     \end{align}
    where $\varphi_{\balitabaja+j}$ is understood to be $\varphi_{v_j}$ after evaluation of this expression
     at a vertex $v$.
Since, as seen above,
$\mathfrak l_{v_{+j},j} = \overline{\mathfrak {r}_{v,j}} =\mathfrak {r}_{v,j}$,
we can rename the sum indices  (which we also do for
the quadratic terms in $\varphi$ and yields an extra $2$ factor) thanks to translation invariance
and conclude that the $\mathfrak{l}$ terms just duplicate  the $\mathfrak{r}$ terms, and thanks to
Eq.~\eqref{gdziescostam}, find that
\[
\sum_v \mathfrak r_{v,j} + \mathfrak l_{v,j} = \sum_v \mathfrak
r_{v,j} +\sum_v \mathfrak r_{v_- ,j} =\sum_v 2\mathfrak r_{v,j} =
\sum_v \Big\{ 4 \Tr (\varphi^2_v) - 2\Tr_v \big [ ( D_j\latt \varphi
  (v) )^* ( D_j\latt \varphi(v) ) \big] \Big\}.\] The second term is
the gauge-Higgs kinetic term reported above, as $a \to 0^+$ and when
$m$ is taken large. This has a $\Lambda^{d-4}= a ^{4-d}$ scaling, with
the $4$ coming from the $(D/\Lambda)^4$ in the spectral action, and
the $d$ from taking the sum to an integral. \par

Next, we clean up the quadratic terms, which rewrite as
a single sum over vertices as follows
 \begin{align*}a^2 (f_2 + 16  d \cdot f_4)\textstyle
 \sum_{w\in Q_0}  \Tr _w (\varphi_w^2).
 \end{align*}
For fixed $w$, the factor $16 d $ above is composed of the explicit
initial $8d$ in the second line of \eqref{costam_phiquadrat}; another
contribution of $ 2 d\times 4 \Tr _w (\varphi_w^2)$, where the $2d =
\sum_{|j|}1 $. The whole polynomial contribution of $\varphi$ is
\begin{align*}\textstyle
 \sum_v  \Tr_v \big [ a^2 ( f_2 +16 d \cdot f_4 ) \varphi_v^2 + a^4f_4 \varphi_v^4\big] \to a^{2-d} ( f_2 +16  d\cdot  f_4 ) \textstyle\int  h^2+  a^{4-d} f_4 \int h^4\,,
\end{align*}
as $a\to 0^+$, $m\to \infty$.
Considering the contribution of a single plaquette based at $v$
with $0<i<j$,
\begin{align*} \notag
\hol_L P_{i,j} = \ee^{ \ii a A_i (v) }   \ee^{ \ii a A_j (v+ a \be_i ) }
 \ee^{ \ii a A_i (v+ a \be_i +a \be_j  ) }
  \ee^{ \ii a A_j (v+ a \be_j  ) }
  = \ee^{ \ii a A_i (v) }   \ee^{ \ii a A_j (v+ a \be_i ) }
 \ee^{- \ii a A_i (v+ a \be_j  ) }
  \ee^{ - \ii a A_j (v) }
\end{align*}
and using Backer-Campbell-Hausdorff formula to simplify the first two
factors and the last two, one finds $ \hol_L P_{i,j}(v) = \exp [ \ii
  a^2 F_{ij}(v) ] $ ignoring $O(a^3)$ in the exponent, where, letting
 $a
\Delta_i A_j(v):= A_j(v+ a\be_i ) -A_j(v)$ we defined
  $F_{ij}:= \Delta _i A _{j} -\Delta _j A _{i} + \ii [A_i,A_j]= -F_{ji}$. In terms of this discrete
version of the curvature of a connection matrix $A$ we rewrite the
plaquettes' contribution:

 \noindent
\begin{align}  \nonumber
\sum_{v \in \Z_m^d}
\sumsub{ P \in \Omega_v(T^d_m) \\ {\mathrm{
      plaquettes}}}  \Tr_v \circ \hol_L ( P)    & = 4\sum_{v  }
\sumsub{ i  , j =1 \\ i < j   } ^ d  \Tr_v\big( \hol_L P_{i,j} + \hol_L  P_{j,i}  \big)
\\
&= 4 \sum_{v} \frac12 \sum_{i\neq j }  \Tr_v (2 - a^4 F_{ij}F_{ij}) \,.   \label{end_F}\end{align}
\noindent%
\begin{minipage}{.79\textwidth}
In the first equality, we split the sum into anti-clockwise and
clockwise plaquettes. Also the sum on the \textsc{lhs} over plaquettes
with $|i|, | j |=1,\dots, d$ is rephrased only as plaquettes with
$i,j>0$, depicted in the right in solid colour, in terms of $A_i(v)$
and $A_j(v)$ while the neighbouring vertices `borrow' those with $i$
or $j$ negative. This in turn implies the factor of $4$, since the
same Wilson loop value $\Tr_v \circ \hol_L P_{i,j}(v)$ reappears also
when the sum over vertices hits $v +a \be_i$, $v +a \be_i$ and $v +a
\be_i + a\be_j$ and the plaquettes there are in the negative quadrants
(hatched). From Eq.~\eqref{end_F} the constant contribution is $
4d(d-1)$ and the $F_{ij}$-dependent part is promoted to the functional
of a smooth field in the limit, obtaining the pure Yang-Mills part of
\eqref{smooth_SpAct}.  From Proposition \ref{thm:YMH_nonexplicitform},
the previous constant is increased by $8d^2-2d$, yielding $12d^2-6d$.
\qedhere \end{minipage} \quad
\begin{minipage}{.192\textwidth}\centering
\begin{align*}
\raisebox{-28.5pt}{
 \includegraphics{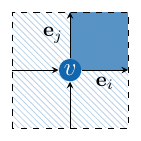}}\end{align*} \end{minipage}
\end{proof}

\begin{remark} \label{rem:uwaga_Higgs}
As the only `phenomenological' comment here, we exercise critique on
the previous model and address solutions: The coefficient for the
above pure Euclidean Yang-Mills sector should be $-1/4g^2$ for $g \in
\R^+$ the gauge $\uni(N)$-Yang-Mills coupling, which fixes
$f_4=1/8g^2$.  This creates two issues.  First, $\mathrm{sgn }f_4$ is
wrong for the gauge-Higgs kinetic term.  This is, at least for even
$d$, an easy-to-solve problem by tensoring the present Hilbert
space\footnote{Almost-commutative manifolds (see \cite{WvSbook}) do
have such Clifford module, an infinite-dimensional algebra.  The
algebra of matrix geometries, on the other hand, is
finite-dimensional, but their Hilbert spaces and Dirac operators have
a different form (in particular, the coefficients of the
gamma-matrices are commutators or anticommutators,
cf. \cite[App. A]{Barrett:2015foa}). I thank Harald Grosse for the
question that motivated this remark.}  by a Clifford $\mathbb Cl(d)
$-module $\mathbb S$ and placing the chirality operator $\gamma_o$
(`generalised $\gamma_5$') along the self-loops $o$
(i.e. $\varphi_v\to \gamma_o\otimes\varphi_v, v\in Q_0$) and flat
gamma matrices along the lattice-edges (i.e. $L_e\to \gamma_i \otimes
L_e$ for $e$ parallel to $\be_i$).  This provides the correct sign,
via anticommutation relations $\gamma_o \gamma_i + \gamma_i
\gamma_o=0$, $i=1,\ldots, d$, but requires the spectral triples to be
even (the corresponding category is beyond the scope of this article).
Secondly, the constraint $f_4=1/8g^2$ affects the Higgs sector too: a
solution is to use the graph distance $\rho(e)= a/ \tau$ ($\tau >0$)
if $s(e)=t(e)$ and else $\rho(e)=a$, for a lattice edge
$e$. Pictorially,
\vspace{-1.05ex}
\[
\includegraphics{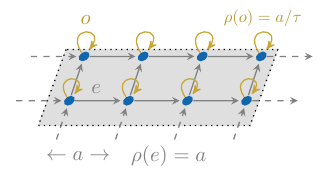} \vspace{-2ex}
\]
This rescales the Higgs as $h\to \tau \cdot h $,
whereby $\tau$ becomes a function of $g$. This is because
the kinetic gauge-Higgs term should, for hermitian $h$, have a
 prefactor $1/2$, so $\tau =  {\sqrt 2 \cdot g} $ (after
 the chirality takes care of the sign).
Since $f_2$ is still free, it controls the coefficient $\tau^2 ( f_2 +16df_4)=
2g^2f_2 +4d$
 of $h^2$, and can yield the known Higgs potential with two minima.
The analysis is left for future work.
 \end{remark}

\subsection{Improved gauge theory}
Symanzik's programme known as \textit{improved gauge theory} consists
of systematically correcting the Wilsonian action for gauge theory,
which we met above in terms of plaquettes.  One of his aims was the
enhancement of the speed of convergence from the lattice to the
continuum (for instance, if a certain observable is known to converge
with an error of O$(a^2)$ in the cutoff $a\inv$, the improved model
should achieve O$(a^4)$ or better). Some models are known to be
capable of this but, to the best of our knowledge, there is no
geometrical explanation of the terms that have to be added to achieve
such improvements. Below, Proposition \ref{thm:D6} derives directly
one model and opens a perspective for new investigations to obtain
further corrections in the framework of Connes' spectral formalism in
noncommutative geometry (which in view of Theorem
\ref{thm:YMH_explicitandsmooth} also could include a Higgs field).
With a unit lattice space, recall the formula $ L_{j} (v)= L_{ (v, v+
  \mathrm{sgn}(j) \be_{|j|}) } (v) $, which allows a convenient
extension to $j\in \{-d,1-d,\ldots,d-1,d\} \setminus \{0\}$.  We can
compute this matrix by \eqref{defD}, $L_{ {-j} }(v) =[L_{j} (v-
  \mathbf e_j)]^*$ which in terms of the gauge field $A_j$ reads
$L_{-j} (v)= \ee^{-\ii A_j(v- \mathbf e_j)} $, so the index in $A_j$
is always positive.  \par

\begin{definition}\label{def:hol_lenght6paths}
Given a representation of $T^d_m$, one lets for any of its vertices
$v$ and $i,j,l \in \{-d,1-d,\ldots,d-1,d\} \setminus \{0\}$ pairwise
different in absolute value
\begin{align*}
\hol _L(\runterhalb{\includegraphics[height=10pt]{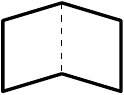}},v ; j,i, l) & :=  \Tr_v (L_j L_i L_{-j} L_l L_{-i} L_{-l} ) \\
\hol_L (\runterhalb{\includegraphics[height=14.3pt]{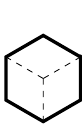}}\,,v ; i,j,l) & :=  \Tr_v (L_i L_j L_{l} L_{-i} L_{-j} L_{-l} ) \end{align*}
where we are using a shorthand notation which, in
the case of the first path, should read
\begin{align*}
\hol _L (\runterhalb{\includegraphics[height=10pt]{hol_puerta}},v ; j,i, l) & =  \Tr_v
[ L_j (v)  L_i (v+\be_j ) L_{-j}(v+\be_i+\be_{j} )
L_l (v+\be_i  )L_{-i} (v+\be_{l} +\be_i) L_{-l}  (v+ \be_{l})
] \notag
\end{align*}
when it is written in full. We can afford ourselves the abbreviation
above in the lattice by choosing the vertices to evaluate $L_j$
in the unique way that makes the path (in this case
$\runterhalb{\includegraphics[height=10pt]{hol_puerta}}$) well-defined. We also define \begin{align*} \hol_L
(\runterhalb{\includegraphics[height=7.5pt]{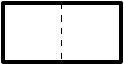}},v ; i,j) &: =
\Tr_v ( L_i L_i L_j L_{-i} L_{-i}L_{-j} ) \\ \hol_L
\big(\,\runterhalb{\includegraphics[height=11pt]{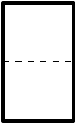}} \,,v ; i ,j \big) &
:= \Tr_v ( L_i L_j L_j L_{-i} L_{-j}L_{-j} )
\end{align*}
\end{definition}


\begin{proposition}[The Spectral Action yields the Weisz-Wohlert cells of `improved gauge theory', \cite{Weisz:1983bn}] \label{thm:D6}
Let $d\geq 3$ and $m>6$. On $Q=(T_m^d)^\star$ and $D= D_Q(L)$,

\begin{align*} \notag
\Tr(D^6 ) & = \sum_{v \in \Z^d_m} \Bigg \{
\theta_{0} (d) N+
\sumsub{| i | , |  j |  =1  \\
|i| \neq |j| }^d \Big[
 \theta_{\includegraphics[height=4pt]{hol_recthor}} (d)
\hol (\runterhalb{\includegraphics[height=7.5pt]{hol_recthor}},v ;  i,j)
+
 \theta_{\includegraphics[height=6.70pt]{hol_rectver}} (d)
\hol (\runterhalb{\includegraphics[height=11pt]{hol_rectver}},v ;  i,j)
+
\theta_{\square} (d)
\hol_L ( \Box ,v ;  i,j) \Big] \\
& \quad \quad +\sumsub{| i | , |  j | , |l|  =1  \\
 \text{pairwise different} }^d
 \Big[\theta_{\includegraphics[height=5pt]{hol_puerta}} (d)
 \hol _L(\runterhalb{\includegraphics[height=10pt]{hol_puerta}},v ; j,i, l)
 +
 \theta_{\includegraphics[height=8.3pt]{hol_hex}} (d)
 \hol_L (\runterhalb{\includegraphics[height=14.3pt]{hol_hex}},v ; i,j,l)
 \Big]
 \Bigg \}\,,\notag
\end{align*}
where the $\theta$-coefficients are polynomials in $d$ given by
\begin{align*}
\theta_{0}(d) & =4(10d^3 - 11d^2 + 6d)\\
\theta_{\square} (d)& = 12 d \\
\theta_{\includegraphics[height=5pt]{hol_puerta}} (d)& = 3 \\
\theta_{\includegraphics[height=8.3pt]{hol_hex}} (d) & = 1 =
 \theta_{\includegraphics[height=3.8pt]{hol_recthor}} (d)  =  \theta_{\includegraphics[height=6.70pt]{hol_rectver}} (d) \,.
\end{align*}
\end{proposition}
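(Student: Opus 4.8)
The plan is to reduce everything to the symmetrised path formula of Corollary~\ref{thm:Wksym} with $k=6$. Since the Dirac operator is $D=D_Q(L)=\mathscr{A}_Q\symmetr(b)$ with unit-distance unitary weights $b_e=L_e$ (see~\eqref{defD}), that corollary gives
\[
\Tr(D^6)=\sum_{v\in Q_0}\ \sumsub{p\in\Omega_v(Q^\star)\\ \ell(p)=6}\Wils(p),
\]
so the whole computation becomes the enumeration of the closed walks of length $6$ based at each vertex $v$ of $Q^\star=(T_m^d)^\star$, grouped by their Wilson-loop value. Because $m>6$, no length-$6$ walk can close up by wrapping around the torus; every such loop lies inside the $L^1$-ball of radius $3$ about $v$ and may be treated as a closed walk in $\Z^d$ whose six steps are vectors $\pm\mathbf e_{|j|}$ summing to zero. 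The constraint $d\geq 3$ enters precisely because the three-axis loops (the planar hexagon and the bent `door') require three independent directions to exist. This is the six-step analogue of the length-$4$ analysis of Proposition~\ref{thm:TracesTorusD}.

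Next I would classify each walk by its \emph{reduced} holonomy, using that consecutive back-and-forth steps cancel via $L_j(v)L_{-j}(v+\mathbf e_j)=L_j(v)L_j(v)^*=1$. Four mutually exclusive families appear: (i) walks that reduce completely, whose holonomy is the identity and which therefore contribute $\Tr_v(1_{H_v})=N$ each; (ii) walks reducing to a single $4$-cycle with one cancelling spur grafted on, whose holonomy is the plaquette trace $\hol_L(\Box,v;i,j)$; and (iii)--(iv) the genuinely irreducible length-$6$ loops, which are exactly the two $2\times1$ rectangles, the `door', and the planar hexagon of Definition~\ref{def:hol_lenght6paths}. Each $\theta$-coefficient is then read off as the multiplicity of ordered six-edge walks realising the corresponding shape after reduction, summed over admissible direction labels and both orientations: the irreducible shapes occur with small multiplicities ($1$ for each rectangle and the hexagon, $3$ for the door); the plaquette acquires $\theta_\square(d)=12d$ from the $2d$ spur directions and the positions at which a cancelling tooth $L_kL_k^*$ can be attached to a $4$-cycle; and summing the identity-holonomy walks over all direction patterns using up to three distinct axes yields the cubic constant $\theta_0(d)=4(10d^3-11d^2+6d)$.

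The principal obstacle is the exhaustive and non-redundant bookkeeping in steps (ii)--(iv). One must (a) separate reducible walks (identity or plaquette holonomy) from the irreducible ones without miscounting the spurs whose direction coincides with a plaquette edge, since those can change the reduced shape; (b) count each geometric loop consistently across its several base points within a walk and across clockwise versus counter-clockwise traversal, exactly the issue behind the factor of $2$ in the plaquette count of Proposition~\ref{thm:TracesTorusD}; and (c) evaluate the combinatorial sum producing $\theta_0(d)$, whose cubic growth tracks walks exploring three independent directions. A convenient consistency check is that the $\theta$-coefficients, weighted by the number of walks each represents, must reproduce the total number of length-$6$ closed walks on $\Z^d$. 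Once the multiplicities are verified, the stated identity follows, and the resulting loop content is precisely the Weisz--Wohlert cell family of \cite{Weisz:1983bn}.
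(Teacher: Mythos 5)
Your outline follows essentially the same route as the paper: reduce $\Tr(D^6)$ to length-$6$ closed walks via Corollary~\ref{thm:Wksym}, classify them by (reduced) holonomy into trivial, plaquette-valued, and the four irreducible Weisz--Wohlert shapes, and read off the $\theta$'s as multiplicities, with the total walk count $c_6(d)$ of Lemma~\ref{thm:numberloopslattice} tying down $\theta_0$. The claimed multiplicities all agree with the paper's; the only caveat is that the decisive bookkeeping you defer --- the rebasing arguments that produce the factor $3$ for the `door' and the $2d+5\cdot 2d=12d$ for the plaquette, and the explicit evaluation $\theta_0(d)=c_6(d)-(\text{nontrivial types})$ --- is precisely where the paper's proof does its work, so those counts would still need to be written out.
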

A further Symanzik-type improvement beyond that by Weisz and Wohlert
could be build by adding higher powers (say up to $2k$) of $D$. That
is why we called $ \theta_{\includegraphics[height=5pt]{hol_puerta}}
(d)$ and the other constants a `polynomial', as for such improvement
$\theta_{\includegraphics[height=8.3pt]{hol_hex}}\hp{2k}
,\theta_{\includegraphics[height=5pt]{hol_puerta}}\hp{2k} ,
\theta_{\includegraphics[height=3.8pt]{hol_recthor}}\hp{2k} $ and $
\theta_{\includegraphics[height=6.70pt]{hol_rectver}}\hp{2k} $ will
have degree $2k-3$.
 \begin{proof}
Observe the following trichotomy for any loop $p\in \Omega_v(Q^\star)$
of length $6$:
\begin{figure} \small \notag
  \begin{align*} \notag \text{Type I}&
\runter{\includegraphics[width=11.5cm]{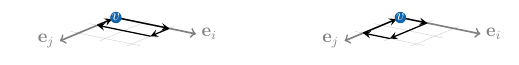}}   \\ \notag  \text{Type II$_a$ }
 &\runter{ \includegraphics[width=11.3cm]{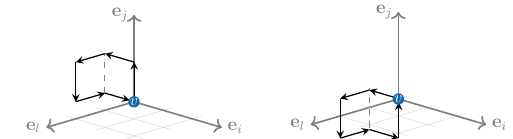} } \\
\\[2ex] \text{Type II$_b$ }&\runter{ \includegraphics[width=5.5cm]{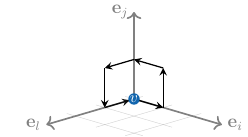} }
 \runter{ \includegraphics[width=5.6cm]{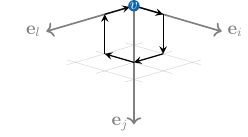} } \quad\,\,\, \text{Type III }\notag \\[2ex]
  \text{Type IV }&\runter{ \includegraphics[width=5.35cm]{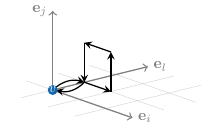} }\qquad\qquad
 \runter{ \includegraphics[width=4.0035cm]{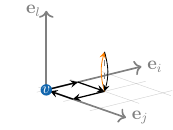} } \qquad\quad\text{Type V } \notag
 \end{align*}
 \caption{Types of length 6 paths in $T^d_m$ ($d\geq 3$) with
   nontrivial holonomy.  Additionally to the main text description:
   Type I shows both $p$ and $p'$. Type II${}_\mathrm a $ has two
   subtypes, the one on the left corresponding to $\tau$ being the
   identity, the rightmost to $(123456)$.  In Type V the insertion of
   the path with the orange arrow could occur at any of the points in
   the plaquette. Here we depict only the insertion with value
   $\alpha=2$, but all other values appearing in
   \eqref{insertions_paths} are meant too.  (As before, in case of
   ambiguity, the orange arrow is the `next one' in the path; curved
   arrows to ease visualisation.)\label{fig:typeslength6paths}}
\end{figure}
\begin{itemize}
 \itemb \textit{Case 1: If  $p$ does not have
 the holonomy of a plaquette, albeit the holonomy of $p$ is not trivial}. Then
 $p$ is any of the next path types with $|i|,|j|,|l| \in \{1,\ldots, d\}$:
 \begin{subequations}
   \begin{align} \qquad
 p_{\textsc{\scriptsize i}}(v;i,j)&= [e_i, e_i ,e_j, e_{-i},e_{-i},e_{-j}]   && &   |i| & \neq |j|  \label{p}\\
 p_{\textsc{\scriptsize i}}'(v;i,j)&= [e_i, e_j ,e_j, e_{-i},e_{-j},e_{-j}]   && &   |i| & \neq |j|\label{pprime} \\
\,\, \qquad p^\tau_{\textsc{\scriptsize ii}}(v;i,j,l)&= \tau \cdot   [e_{i}, e_{j} , e_{-i},e_{l}, e_{-j},e_{-l}]   && & |i|&,|j|,|l|  \text{ pairwise different}  \label{pII} \\
 \qquad p_{\textsc{\scriptsize iii}}(v;i,j,l)&= [e_i, e_j ,e_l, e_{-i},e_{-j},e_{-l}] && &  |i|&,|j|,|l|  \text{ pairwise different}\label{pIII}
  \end{align}
 \end{subequations}%
where $\tau \in \Sym(6)$, which acts by permutation of the six
arguments, is one of
\begin{align} \tau=\id_6 \quad \tau=(123456) \quad \text{ or } \quad  \tau=(135)(246)\,. \label{pemutations_allowed}\end{align}
Further, $e_\alpha$ denotes the edge parallel to $\mathrm{sgn}(\alpha)
\be_{|\alpha|}$ based at the only vertex that makes the path in
question well-defined (making outgoing sources and incoming targets
coincide, see Types I,II and III in Fig. \ref{fig:typeslength6paths})
and based at $v$.  \itemb \textit{Case 2: The path $p$ has the
  holonomy of a plaquette}. Concretely, let $(p_1) ^\vee
_{\,\alpha}(p_2)$ denote (whenever well-defined) the insertion of the
path $p_1$ into the path $p_2$ after the $(\alpha-1)^{\text{th}} $
vertex of the latter. Then $p$ has for $|i|, |j| \in \{1,\ldots, d\}$
with $|i| \neq |j|$ and arbitrary $ l$ the following types
\begin{align} \label{insertions_paths}
p_{\textsc{\scriptsize iv}}
= e_{-l}\cdot  P_{i,j} \cdot e_l
\qquad
p_{\textsc{\scriptsize v}}= [e_l,e_{-l}]^\vee_{\alpha} P_{i,j} \qquad \alpha=\{0,1,2,3,4\}
\end{align}
where $P_{j,l}$ is a plaquette based as in the Types IV and V of
Figure \ref{fig:typeslength6paths}.  \itemb \textit{Case 3: The path $p$
  has no holonomy.} We only care about the number $\theta_0(d)$ of
such paths.
\end{itemize}
We now count how many paths per type exist.
\begin{itemize}
 \itemb \textit{Type I:} Here $i,j\in\{-d,\ldots,-1,1,\ldots,d\}$ are
 the only parameters, and it is only required that $|i| \neq |j|$,
 else the path has trivial holonomy. Thus there are $2d(2d-2)$ Type I
 paths of the form \eqref{p} (that is
 \runterhalb{\includegraphics[height=8pt]{hol_recthor}} ) and the same
 number for \eqref{pprime}, or
 \runterhalb{\includegraphics[height=11pt]{hol_rectver}} in form.
 \itemb \textit{Type II:} Depending on the cycle $\tau $ in
 $p^\tau_{\textsc{\scriptsize ii}}(v;i,j,l)$, there are two subcases:
\begin{itemize}
 \item[-]\textit{Type II$_{a}$}: If $\tau=\sigma$
 or $\tau=\sigma^2$,  being $\sigma=(123456)$. See Figure \ref{fig:typeslength6paths}.
    \item[-] \textit{Type II$_{b}$}: When $\tau$ is the trivial
      permutation $\id_6$.
\end{itemize}
These three choices yield two paths that are independent in the sense
that, e.g. $p_{\textsc{\scriptsize II}}^{\id_6}$ cannot be obtained
from $p_{\textsc{\scriptsize II}}^{\sigma}$ or $p_{\textsc{\scriptsize
    II}}^{\sigma^2}$ just by a different choice of their arguments.
On the other hand, the other permutations $\sigma^q$, $q=3,4,5$
corresponding to the other three rootings of the polygon
(i.e. vertices of the path where to put $v$) are dependent from the
first three.  Since the holonomies are presented as a sum over
$i,j,l$, $p^{\sigma^q}_{\textsc{\scriptsize II}}$ for $q>2$ are
already considered in the cases for lower $q$, by symmetry
arguments. For instance, for $q=3$, the permutation
$\sigma^3=(14)(25)(36) $ yields $p_{\textsc{\scriptsize
    II}}^{(14)(25)(36)}(v;i,j,l) = p_{\textsc{\scriptsize II}}^{\id_6}
(v; l,-j,i)$.  \par

 Notice that we rewrite all the holonomies for Type II$_{a}$
 as Type II$_{b}$ at a shifted vertex  (it is
 more natural to see this path with form of `open door'
 as being based at the one of the bases of `its hinge').
 Concretely,  the holonomy of $p_{\textsc{\scriptsize II}}^{\sigma}(v;i,j,k)$
  coincides with that of $p_{\textsc{\scriptsize II}}^{\id_6}( v -\be_i ; i,j,l)$;
  similarly,
  the holonomies of $p_{\textsc{\scriptsize II}}^{\sigma^2}(v;i,j,k)$  and
  $p_{\textsc{\scriptsize II}}^{\id_6}(v-\be_i-\be_j; i,j,k)$ are equal.
    But this means also that the Type II$_{a}$ paths from the neighbours
  in precisely the opposite direction are gained back. Thus the factor $\theta_{\runterhalb{\includegraphics[height=5pt]{hol_puerta}}}$ of
  $\hol (\runterhalb{\includegraphics[height=10pt]{hol_puerta}}, v;i,j,l )$ is 3.
  All this is possible since the trace of $D^6$ is
 a sum over paths based at all vertices.

 \itemb \textit{Type III:} The parameters $i,j,l$ have pairwise
 different absolute values, there are thus $2^3d(d-1)(d-2)$ such
 paths.  \itemb \textit{Type IV:} For each of the $2d$ nearest
 neighbours of $v$ there are $4d(d-1)$ `shifted' plaquettes. They will
 contribute to the sum of all holonomies of such neighbours, and do
 not contribute to plaquette-holonomies based at $v$.  By the same
 argument, the neighbours $\{v- \be_i\}_{i, |i|=1,\ldots,d} $ provide
 shifted plaquettes at $v$, as depicted here:
 \begin{align*}
 \runter{ \includegraphics[width=5cm]{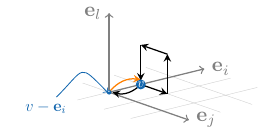} }
 \end{align*}
 So due to this shift, we anyway obtain plaquette-holonomies based at
 $v$, in total $2d$ times the usual $4d(d-1)$ different plaquettes.

 \itemb \textit{Type V:} There are $2d$ possible path insertions
 of the type $[e_l,-e_l]$  at any of the vertices $\alpha=0,1,2,3,4$, $4d(d-1)$
 of the plaquette based at $v$. This yields $5\times 4d(d-1)$.

\end{itemize}
From Types IV and V we conclude that number of times that
$\hol(\Box,v;i,j,l) $ is repeated is $\theta_{\square}=12 d$.  The
final coefficient $\theta_0(d)$ is then the total number $c_6(d)$ of
length-$6$ paths minus Case I and II.  By Lemma
\ref{thm:numberloopslattice} or Ex.~\ref{ex:numberloopslattice}, one
knows $c_6(k)$, and by the analysis above one has
\begin{align*} \nonumber
\theta_0(d) &=c_6(d)-  \bigg[\sum_{\text{X$=$I, II,\ldots, V}} \#\{\text{paths in Type X}\} \bigg] \\
&= 120d^3 - 180d^2 + 80d   -  \{ (\theta_{\includegraphics[height=4pt]{hol_recthor}} (d)+  \theta_{\includegraphics[height=4pt]{hol_rectver}}(d) +
\theta_{\square}(d) )\times 4d(d-1) \\
&\hspace{4.5cm} + [\theta_{\includegraphics[height=5pt]{hol_puerta}} (d) +  \theta_{\includegraphics[height=8.3pt]{hol_hex}}(d ) ] \times 8d(d-1)(d-2) \nonumber
\}
\\&=120d^3 - 180d^2 + 80d
-( 80d^3 - 136d^2 + 56d) \,.\qedhere
\end{align*}
\end{proof}

\subsection{Remarks on the target category}\label{sec:theMvScategory}
We now justify why we chose $\pS$ as target category.
\subsubsection{Implications of $\mathcal S$-representations for the Higgs}
 In
\cite[Sec.~4.2]{MvS}, a spin $4$-manifold $M$ is assumed for $Q$ to
embed there, $Q\subset M$. The manifold $M$ induces a Dirac operator
on $Q$ that in fact inspired us to build $D_Q$ in
Eq.~\eqref{defD}. However, two additional differences, beyond those already
apparent before, exist: first, ours
does not assume a background manifold; further, and essentially,  the
hermitian (Higgs) field that emerges from tracing our $D_Q$ is not
forced to be constant on $Q_0$.
With the aim to justify the second remark, we give details on the
category $\mathcal S$ used by Marcolli-van Suijlekom.  Its objects are
spectral triples of finite dimension and $\mathcal S$-morphisms
$(\phi,L):(A_s,H_s,D_s)\to (A_t,H_t,D_t) $ consist of a
$\ppS$-morphism with the additional condition (here again $s,t$ are
just labels)
\begin{align}
L D_s L^*= D_t \qquad \qquad\text{\cite[Eq. 2]{MvS}}\,. \label{isospectrality}
\end{align}
Despite its naturalness\footnote{ `Naturalness' in the sense that
\textit{not} asking \eqref{isospectrality} will be not natural, since
this would imply that isomorphic objects in that (hypothetical)
category would exist with different spectra.  But this, only if the
Dirac operators play a role in the category. In the present paper, our
Dirac operator emerges from representation theory in a category
without Dirac operator (for whose objects therefore
\eqref{isospectrality} plays no role).} in the spectral noncommutative
geometry context, quiver $\mathcal S$-representations are too
restrictive to yield non-trivial local action functionals.  It is
precisely this latter condition what we want to avoid by working only
in $\pS$ and building a Dirac operator out of a quiver
$\pS$-representation. Otherwise, assuming
\eqref{isospectrality}, an $\mathcal S$-representation of a connected
quiver $Q$ forces isospectrality\footnote{The author thanks Sebastian
Steinhaus for this remark, made in private communication. Only the
proof that follows is by the author.} of the Dirac operators at all its vertices,  yielding
a constant Higgs field, as we prove:\\[-.5ex]

\noindent
Indeed, by
connectedness of $Q$, there is always at least one path $p$ in the underlying
graph $\Gamma Q$ between arbitrary vertices $v, w\in Q$.  Applying
Eq.~\eqref{isospectrality} to each edge $e_j$ of the path
$p=[e_1,\ldots,e_k]$ one has $D_{w}=U_p D_{v} U_p^*$ where
$U_p=
L_{e_k}^{\varepsilon_k} \cdots L_{e_2}^{\varepsilon_2}L_{e_1}^{\varepsilon_1}$ and signs $\varepsilon_j=\pm$ for
$1\leq j\leq k$ have the effect to correct the orientation of the edges that has been
`forgotten' by passing from $Q$ to $\Gamma Q$.  In any case, $U_p$ is
again a unitary matrix, as each $L_{e_i}$ is, and therefore the characteristic polynomials of $
D_w= U_p D_{v} U_p^*$ and of $D_v$ are the same. They then share
spectrum, so $\Tr_v (D^n_v)= \Tr_w (D_w^n)$, $n\in\N$.\\[-6ex]

\noindent
\begin{figure}[h!]\noindent
\begin{minipage}{.5384\textwidth}\noindent
This implies that if one constructs operators that are polynomial
($\mathscr O(\mathfrak h)=\sum_i a_i \mathfrak h^i$, $a_i\in \R$) in a Higgs scalar
field $\mathfrak h$ by tracing powers of a Dirac operator that representation
theory attaches to the vertex of the quiver, then the use of $\mathcal
S$-representations---from them, concretely
Eq.~\eqref{isospectrality}---prevents the construction of a local
non-trivial action, as the above implies the constancy of
$Q_0 \ni v \mapsto \Tr _{v}(D_{v}^n)$, for all $n\in \N$. In the case of
\cite[Sec. 4]{MvS}, with $Q=\Z^4$ this yields in the smooth limit  a constant Higgs field $\mathfrak h$ on $\R^4$ (and whenever defined,
$\int_{M} \mathscr O(\mathfrak h) = \vol
(M) \cdot \mathscr O[\mathfrak h(x)]$
where $x$ is \textit{any} point of
the manifold $M$, if one uses the same category $\mathcal S$).
  \\[1ex]
This ends the proof. As a side comment, in
retrospective, for a classical, discrete (or PL-)manifold made of gluings
of polygon or higher dimensional blocks, the spectra of
its different pieces need not be the same. Indeed, a discrete surface
as in Figure \ref{fig:nonreflexive} made of different blocks (therein
regular hexagons and pentagons) has pieces with different
spectra, by Weyl's law.
\end{minipage}
 \hfill
 \begin{minipage}{.436\textwidth}
 \qquad\runter{\includegraphics[width=6.592cm]{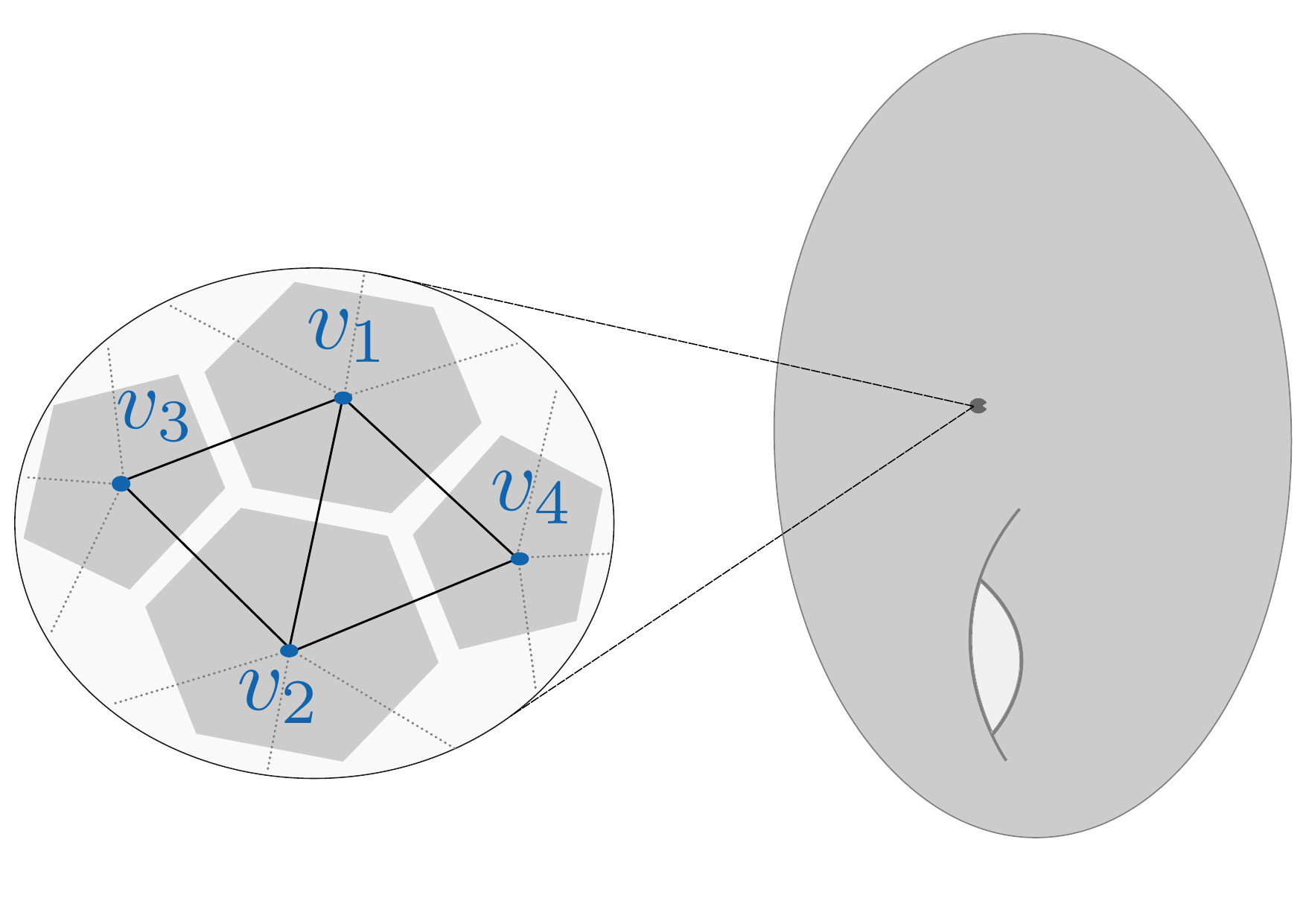}} \vspace{-2ex}
  \caption{A discretised manifold is shown on the right. When one
    looks in detail at these polygonal equilateral building blocks,
    even in a classical setting Dirac operators for the polygonal
    regions, thus those at the vertices of the dual graph or quiver,
    need not be isospectral. (This illustration is not playing a role
    in the proof.)
 \label{fig:nonreflexive}}
\end{minipage}
\end{figure}
\vspace{-2ex}

\noindent

\subsubsection{Kernels}\label{sec:kernels}
As a final remark, allowing $\lambda$ to have a non-zero kernel $\ker
\lambda= \{a\in A : \lambda(a)=0\}$ has been studied in \cite{MvS},
yielding some neat computations of representation spaces in terms of
homogeneous spaces.  These kernels of the action $\lambda$ are not to
be confused with the kernels of the maps associated to the edges,
which emerge in the enumeration of indecomposable quiver representations in
vector spaces; see \cite[Sec. 6.2]{Etingof}. For, in the
first situation, the kernels are those $\ker \lambda_v $ of the action
$\lambda_v:A_v \curvearrowright H_v$ associated to objects
$(A_v,\lambda_v,H_v) \in \pS$ that sit at vertices $v\in Q_0$. In
contrast, given a $\mathsf{Vect}$-representation $ (W_v, T_e)_{v\in Q_0, e
  \in Q_1}$ of $Q$ on vector spaces, the kernels that matter (in the
enumerative sense of above) are the kernels of maps associated to edges,
$\ker T_e: W_{s(e)}\to W_{t(e)}$.

\subsection{Proposal for quantisation: integration over $\Rep Q$} 
The tentative quiver partition function
\begin{align*}
``Z(Q)=\sum_{((A,H),(\phi,L))\in \ReppS(Q)}  \int
\ee^{-\frac1\hbar \Tr f [D_Q(L)/\Lambda ] }
\big(\textstyle\prod_{e\in Q_1}   \dif L_e\big) \text{''}
\end{align*}
is hard to state even in terms of formal series.  This was the reason
to introduce the restricted space of $N$-dimensional representations
in \eqref{rep_N}. Claim \ref{thm:boundRepQ} implies that $Z_N^f(Q)$ is given by a finite
sum, in fact indexed by Bratteli networks of dimension $N$  (being $N=\dim H_v$ for all $v$,
cf. Def. \ref{def:Bratteli_sieć}),
of finite-dimensional integrals over unitary groups. A
partition function proposal is
\begin{align*}
Z^f_N(Q) :=\sumsub { \text{Bratteli networks} \\ (\bn,\br ) \text{ on $Q$}}  \int_{\prod_{e\in Q_1}
 \uni(\bn_{t(e)} )}
\ee^{- \Tr f [D_Q(L)/\Lambda ] }
\dif \mu_{\bn,\br}(L)\in \C[[f_0,f_1,\ldots,]] \,\, \text{ for fixed } N\in \Z_{\geq 1}\, , \nonumber
 \end{align*} where  the integral is performed over all
 edge-assignments $e\mapsto L_e$ that verify $R=(\bn ,\br ; L) \in
 \Rep^N_\pS(Q)$. The measure $\dif \mu_{\bn,\br}(L)$ is a product Haar
 measure. For the evaluation of (the expectation value of) Wilson loops, integrals of the types
 Gross-Witten-Wadia and Harish-Chandra--Itzykson–Zuber are useful. In
 fact, an initial step of the planed extended analysis were the
 Makeenko-Migdal equations for this type of path integral
 \cite{MakeenkoMigdalNCG}. \par  As a final remark, when $\# Q_0$ is
 infinity, a question that remains for future work is whether the
 axioms of the spectral triple in infinite dimensions hold
 (Rem.~\ref{rem:infdims}), with or without extensions by reality and
 chirality operators (Rem.~\ref{rem:uwaga_Higgs}).  \\[-3ex]

 \addtocontents{toc}{\protect\setcounter{tocdepth}{-1}}
 \section*{Support received during this work}%
 \fontsize{9.2}{9.0}\selectfont This work was mainly supported by the
 European Research Council (ERC) under the European Union’s Horizon
 2020 research and innovation program (grant agreement No818066) and
 also by the Deutsche Forschungsgemeinschaft (DFG, German Research
 Foundation) under Germany’s Excellence Strategy EXC-2181/1-390900948
 (the Heidelberg \textsc{Structures} Cluster of Excellence).  \par An
 \textit{ESI Junior Research Fellowship} of the Erwin Schrödinger
 International Institute for Mathematics and Physics (ESI) Vienna,
 where important part of this article was written under fantastic
 conditions of work, is acknowledged.  Also the organisers of the
 OIST-Workshop `Invitation to Recursion, Resurgence and Combinatorics'
 in Okinawa, Japan, are acknowledged for travel support. For an office
 place and allowing my participation in `Quantum gravity, random
 geometry and holography', the support of the Institut Henri Poincaré
 (UAR 839 CNRS-Sorbonne Université) and LabEx CARMIN
 (ANR-10-LABX-59-01) is acknowledged.  \\[-3ex]

 {\fontsize{11.395}{14.0125}\selectfont%
  \section*{Acknowledgements}%
 }\fontsize{9.2}{9.0}\selectfont I thank both L Glaser
 (\text{U. Wien}) and S. Steinhaus (\text{U. Jena}) for fruitful
 discussions, and the former additionally for the very kind and
 long-term hospitality.  That several years ago W. van Suijlekom
 (U. Radboud) shared his enthusiasm for the core-topic was important
 to start this work. The author thanks an anonymous referee for
 corrections and numerous useful comments that led to improvements.
\\

 \fontsize{10.395}{13.0125}\selectfont%
 \addtocontents{toc}{\protect\setcounter{tocdepth}{1}}
\appendix

\section{Counting loops in a lattice} \label{app:Pathcounting}
In the main text several computations of the spectral action $\Tr
f(D)$ are presented in terms of number of paths. This appendix
estimates the growth in $\deg f$ of the contributions to the spectral
action.  Since we are on a square lattice, contributions come from an
even $\deg f$ (if we have enough lattice points in order to avoid
`straight' loops caused by cyclic boundary conditions, which explains
$m>k$ below).

\begin{lemma}[Number of loops on the lattice] \label{thm:numberloopslattice}
For even $k$ and $m>k$
the number $c_d(k)$ of length-$k$
closed paths in $(T^d_m)^\star $ based at any point reads
\begin{align}
c_{d}(k)=\!\!\!\! \sumsub{ \bmu\,\, \vdash\, k /2 \\ \quad
\,\bmu\,=\,(\mu_i )_{i=1}^d \\\,\,\,  \,\,\mathrm{ordered }
\,\,\bmu \in \Z_{\geq0}^d
}
\frac{ k! }{ \big [ \prod_{j =1}^{d}  \mu_j !  \big ]^2 }\,,
\label{pathcountingGeneral}
\end{align}
where the integer partition $\boldsymbol \mu \vdash k/2$
does allow zero-entries of the $d$-tuple.
\end{lemma}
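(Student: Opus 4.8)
The plan is to realize each loop as a word in elementary steps and then count such words by an elementary multinomial argument. Since $(T^d_m)^\star$ has, at every vertex, exactly one outgoing edge in each of the $2d$ directions $\pm\be_1,\ldots,\pm\be_d$ (and no self-loops, $T^d_m$ having none), a length-$k$ path based at $v$ is precisely a word $w=(s_1,\ldots,s_k)$ with each $s_a\in\{\pm\be_1,\ldots,\pm\be_d\}$; the walk then visits $v,\,v+s_1,\,v+s_1+s_2,\ldots$, all sums taken in $\Z_m^d$. First I would note that every such word is a legitimate path, since from any vertex all $2d$ directions are available, so counting loops is the same as counting closed words.

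Next I would translate the closure condition $t(p)=v=s(p)$. The terminal vertex is $v+\sum_{a=1}^k s_a \pmod m$, so $p$ is closed iff $\sum_{a=1}^k s_a\equiv 0$ in $\Z_m^d$. The main point of the proof—and the only place the hypothesis $m>k$ is used—is that this congruence is actually an equality in $\Z^d$: writing $\mu_i^{\pm}$ for the number of steps equal to $\pm\be_i$, the net displacement in coordinate $i$ is $\mu_i^{+}-\mu_i^{-}$, and $|\mu_i^{+}-\mu_i^{-}|\le \mu_i^{+}+\mu_i^{-}\le k<m$, so the only multiple of $m$ it can equal is $0$. Hence closure is equivalent to $\mu_i^{+}=\mu_i^{-}$ for each $i$. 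This is exactly the exclusion of the `straight' wrap-around loops mentioned before the statement, and it is the step I expect to require the most care (everything else is bookkeeping).

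Setting $\mu_i:=\mu_i^{+}=\mu_i^{-}$, the length constraint becomes $k=\sum_{i=1}^d(\mu_i^{+}+\mu_i^{-})=2\sum_{i=1}^d\mu_i$, so $\bmu=(\mu_1,\ldots,\mu_d)\in\Z_{\ge 0}^d$ ranges precisely over the weak compositions of $k/2$ into $d$ (possibly zero) parts, i.e.\ the index set of the sum in \eqref{pathcountingGeneral}. For a fixed $\bmu$, the closed words with this step-profile are the distinct orderings of the multiset consisting of $\mu_i$ copies of $+\be_i$ and $\mu_i$ copies of $-\be_i$ for each $i$, and their number is the multinomial coefficient
\[
\frac{k!}{\prod_{i=1}^d (\mu_i!)\,(\mu_i!)}=\frac{k!}{\big[\prod_{i=1}^d \mu_i!\big]^2}\,.
\]
Summing over all admissible $\bmu$ yields \eqref{pathcountingGeneral}. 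Finally I would remark that the whole argument used no property of the base vertex $v$ beyond translation invariance of $(T^d_m)^\star$, so $c_d(k)$ is indeed independent of $v$, as claimed.
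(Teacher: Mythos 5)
Your proof is correct and follows essentially the same route as the paper's: both identify a closed length-$k$ walk with an ordering of a multiset containing $\mu_i$ steps $+\be_i$ and $\mu_i$ steps $-\be_i$, and count these orderings by the multinomial coefficient $k!/\prod_i(\mu_i!)^2$. If anything, you are slightly more careful than the paper in making explicit where the hypothesis $m>k$ enters (ruling out wrap-around loops so that the closure congruence mod $m$ forces $\mu_i^{+}=\mu_i^{-}$), a point the paper only addresses in the remarks preceding the lemma.
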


\begin{proof} Let $\be_1,\ldots, \be_d$ be the standard basis vectors of $\Z^d$.
 A path $p$ based at any point of $(T_m^d)^\star$ is determined,
 first, by the number $\mu_i \in \{0,1,\ldots, k/2\}$ of steps in
 positive direction $\be_i$ for each $i=1,\ldots,d$. Since
 $s(p)=t(p)$, the number of steps opposite direction $-\be_i$  to the
 basis vector, is equally $\mu_i$; the second datum determining $p$ is
 a a permutation $\tau \in \Sym(k)$ that orders all the steps, which
 have to be $2\mu_1 +2\mu_2+\ldots +2\mu_d=\ell(p)=k$ in number,
 whence $\boldsymbol \mu \vdash k/2$.\par But $\tau$ is unique only up
 to $2d$ permutations: one of $\Sym(\mu_j)$ for each $j=1,\ldots, d$,
 which accounts for the multiplicity of the steps along the positive
 $j$-axis, and another independent permutation of all the steps of the
 negative $j$-axis, thus also in $\Sym(\mu_j)$. This reduces the
 symmetry to
\begin{align*}
[\tau]  \in \frac{\Sym(k)}{ \Sym(\mu_1)^2\times \Sym(\mu_2)^2  \cdots  \times \Sym(\mu_{d})^2}\,,
\end{align*}
which has as many elements as those summed in \eqref{pathcountingGeneral}.
\end{proof}

\begin{example}\label{ex:numberloopslattice}
We check Formula \eqref{pathcountingGeneral} above against explicit counting.
\begin{itemize}
 \itemb For $d=1$, $c_1(k)={ k \choose k/2 }$, since the only
 $d$-tuple partition of $k/2$ is $\mu=(k/2)$ itself.
 \itemb
 According to Lemma \ref{thm:numberloopslattice}, the number of length-$k$ paths on
 the plane rectangular lattice is
 \begin{align*} \nonumber
 \qquad \,\,\quad
 c_2(k)=\sumsub{ \mu_1,\mu_2 \geq 0\\
 \mu_1 + \mu_2 =k/2}
 \frac{k!}{(\mu_1!)^2 (\mu_2!)^2}
 =\frac{k!}{(k/2) ! (k/2)! } \times \!\!\!\! \sumsub{ \mu_1,\mu_2 \geq 0\\
 \mu_1 + \mu_2 =k/2}
 \frac{(k/2)!}{\mu_1! \mu_2!}
 \frac{(k/2)!}{\mu_1! \mu_2!}={ k \choose k/2 }\times{ k \choose k/2 }\,,
 \end{align*}
thanks to the Vandermonde identity. The pattern then breaks, $c_3(k) \neq {k \choose {k/2}}^3$.

 \itemb To count length-$6$ paths one needs the partitions of $6/2$,
 $\{1,1,1\}$, $\{2,1\} $ and $\{3\}$. For $d\geq 3$ there are $d
 \choose 3$ ways to add zeroes to the first partition to make an
 ordered $d$-tuple; $d (d-1)$ ways for $\{2,1\} $ and $d$ ways for
 $\{3\}$. Thus
  \begin{align*} c_d(6) = 6! \textstyle\Big\{ \frac{(d-2)(d-1)d}{ 3!  (1! 1! 1! 0!\cdots 0!)^2} +
   \frac{d(d-1)}{(2! 1! 0!\cdots 0!)^2} + \frac{d}{(3! 0!\cdots 0!)^2}
  \Big \} = 120d^3 - 180 d^2 + 80 d\,.   \end{align*}
\end{itemize}

\end{example}

The next bounds help to estimate the growth of paths
that contribute to $\Tr (D_Q^l)$. Nevertheless,
most of them yield constant terms in the spectral action,
it is obvious from Section \ref{sec:physics}.
\begin{lemma}\label{thm:PathsKn}
Given $l\in \Zpos$ and
any graph $\Gamma$, let $t_{\Gamma} (l )$ be the number of length-$l$
closed paths on $\Gamma$. Then
\begin{enumerate}
  \item $t_{K_n}(l) =(n-1)^l +(n-1) \cdot (-1)^l$ for the complete
    graph $K_n$.
\vspace{5pt}
\item $t_{K^\circ_n}(l)= n^l $ for the complete graph $\mathring K_n$
  enlarged by self-loops.
\vspace{5pt}
\item Consider the graph $G{(n, \lambda,\nu)}$ with $n$ vertices, with
  exactly $\lambda$ self-loops at each vertex and $\nu$ edges between
  any pair of different
  vertices. Then \begin{align*}t_{G(n,\lambda,\nu)} (l) = [ (n-1) \nu +
      \lambda ]^l +(-1)^l( n-1) \cdot (\nu-\lambda)^l \,.\end{align*}
\item For a quiver $Q$, let $\Gamma Q $ denote its underlying graph. Letting
\begin{align*}\qquad \, n=\#Q_0 \text{ and } \nu =\max_{v,w\in Q_0} \# \{ e\in \Gamma Q _1 : e=(v,w)  \}\,,
 \text { it holds } t_{\Gamma \kol Q }(l) \leq   n^l  \nu^l\,. \notag
 \end{align*}
\vspace{-10pt}
  \item
 Let $\lambda=\max_{v\in Q_0} \# \{ e\in \Gamma Q _1 : e=(v,v)  \}  $, i.e. the maximum
 number of self-loops at any vertex in $Q$. Then
 \begin{align*} t_{\Gamma Q }(l) \leq    [ (n-1) \nu + \lambda ]^l +( n-1) \cdot  (\lambda -\nu)^l  \,. \end{align*}
\end{enumerate}

\end{lemma}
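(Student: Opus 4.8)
The plan is to realise the loop-count as a matrix trace and then dominate it entrywise by the regular graph $G(n,\lambda,\nu)$ of part (3). Write $A=(A_{vw})_{v,w\in Q_0}$ for the adjacency matrix of the underlying graph $\Gamma Q$, so that $A_{vw}=\#\{e\in \Gamma Q_1: e=(v,w)\}$ is a nonnegative integer and $A$ is symmetric. Exactly as in the proofs of parts (1)--(3), the number of length-$l$ closed paths based anywhere equals $t_{\Gamma Q}(l)=\Tr(A^l)$, since $(A^l)_{vv}$ counts the closed walks of length $l$ at $v$ and summing over $v$ removes the basepoint restriction.

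By the very definitions of $\lambda$ and $\nu$ one has $A_{vv}\le \lambda$ for every vertex $v$ and $A_{vw}\le \nu$ for every pair $v\neq w$. Hence $0\le A\le B$ entrywise, where $B$ is the adjacency matrix of $G(n,\lambda,\nu)$, namely $B=(\lambda-\nu)I+\nu J$ with $J$ the all-ones matrix (so $B_{vv}=\lambda$ and $B_{vw}=\nu$ for $v\ne w$).

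The key step is the monotonicity of $X\mapsto \Tr(X^l)$ under entrywise domination of nonnegative matrices: if $0\le A\le B$ entrywise then $(A^l)_{vw}\le (B^l)_{vw}$ for all $v,w$ and all $l$. I would prove this by induction on $l$, the inductive step being $(A^{l+1})_{vw}=\sum_k (A^l)_{vk}A_{kw}\le \sum_k (B^l)_{vk}B_{kw}=(B^{l+1})_{vw}$, where the nonnegativity of all factors is exactly what preserves the inequality after summation over $k$. Taking $v=w$ and summing over $Q_0$ gives $\Tr(A^l)\le \Tr(B^l)$.

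Combining this with part (3), which gives $\Tr(B^l)=t_{G(n,\lambda,\nu)}(l)=[(n-1)\nu+\lambda]^l+(n-1)(\lambda-\nu)^l$, yields the claimed bound. The one point that deserves a remark is that for $\lambda<\nu$ and odd $l$ the term $(n-1)(\lambda-\nu)^l$ is negative; this causes no trouble, because the inequality is obtained at the level of the traces $\Tr(A^l)\le \Tr(B^l)$ and not term-by-term, so the sign of the right-hand side is immaterial. I do not expect any genuine obstacle here: the only content is the elementary entrywise-monotonicity induction, and everything else is bookkeeping together with the appeal to part (3).
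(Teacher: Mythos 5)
Your argument for item (5) is correct, and it is essentially the paper's own route for that item: the paper also deduces (5) from (3) via ``an obvious bound entry-wise'', and your induction $(A^{l+1})_{vw}=\sum_k (A^l)_{vk}A_{kw}\le\sum_k (B^l)_{vk}B_{kw}$ is exactly the detail behind that phrase. Your closing remark about the possibly negative term $(n-1)(\lambda-\nu)^l$ is also well taken --- the inequality lives at the level of $\Tr(A^l)\le\Tr(B^l)$, not term by term.

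The gap is one of coverage: the statement is a five-part lemma, and you prove only part (5). Parts (1)--(3) are not ``already proved'' --- they are the claims to be established, and part (3) is an \emph{input} to your argument, so as written the proof is incomplete (indeed it leans on the very item it should be proving). The missing content is the explicit evaluation $\Tr(B^l)=[(n-1)\nu+\lambda]^l+(n-1)(\lambda-\nu)^l$: the paper gets it by writing $\mathscr A_{K_n}=E_n-1_n$ (with $E_n$ the all-ones matrix), using $E_n^k=n^{k-1}E_n$ and the binomial theorem, and then substituting $E_n\mapsto\nu E_n$, $1_n\mapsto\lambda 1_n$ for $G(n,\lambda,\nu)$; your own decomposition $B=(\lambda-\nu)I+\nu J$ would give the same in one line from the spectrum of $J$ (eigenvalues $n$ once and $0$ with multiplicity $n-1$), but you never carry this out. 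Part (4), the bound $t_{\Gamma\kol Q}(l)\le n^l\nu^l$ obtained from $\Tr(A^l)\le\Tr[(\nu E_n)^l]$, is not addressed at all. Supplying the trace computation for (1)--(3) and the one-line domination for (4) would close the proof.
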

\begin{proof}
It is a graph theory fact---but it follows also from the proof of
Corollary \ref{thm:Wksym} by letting $A=\C$ and by choosing unit
weights $1$ on each edge therein---that the $(i,j)$-th entry of the
$l$-th power of the adjacency matrix of any graph $G$ counts the
number of length-$l$ paths (made of edges) of $G$ between $i$ and $j$.
Then the trace of the $l$-th power of the adjacency matrix counts all
length-$l$ loops in $G$. \par
We work out the first case and all others follow.  The adjacency
matrix $\mathscr A_n$ of $K_n$ is the constant matrix with zeroes in
the diagonal (since there are no self-loops) and filled elsewhere ones
(since exactly one edge connects any two different vertices), so
$(\mathscr A_n)_{i,j}=1-\delta_{i,j}$ for $i,j\in
\{1,\ldots,n\}=(K_n)_0$. Let $E_n$ be the matrix whose entries are all
ones, or $E_n=\mathscr A_n+1_n$, where $1_n$ is the identity matrix.
Clearly $\Tr E_n=n$ and $(E_n)^k=n^{k-1} E_n$ for $k>1$, so
\allowdisplaybreaks[3]
\begin{align}
t_n(l ) &= \Tr  ( \mathscr A_n^l   ) \label{depaso}\\
& = \sum_{k=0}^l  \binom{l }{k}\Tr \big[ (E_n)^{k} (-1_n)^{l -k} \big] \notag \\
&= \sum_{k=1}^l  \binom{l }{k}\Tr \big[  n^{k-1} E_n (-1_n)^{l -k} \big]  +n  (-1)^l   \notag\\ \notag
&= \sum_{k=1}^l  \binom{l }{k} n^{k-1} (-1)^{l -k}  \cdot n +n  (-1)^l  \\ 
&=  \sum_{k=0}^l  \binom{l }{k} n^{k} (-1)^{l -k} -(-1)^l   + n(-1)^l  \notag \\ \notag
&= (n-1)^l  +(n-1) \cdot (-1)^l \,.
\end{align}%
\allowdisplaybreaks[1]%
But notice that $E_n$ is the adjacency matrix for $\kol K_n$, so
equally from $\Tr E_n=n$ and $(E_n)^l=n^{l-1} E_n$, it follows
$t_{K^\circ_n}(l)= \Tr( E_n^l) = n^l $.  \par For the fourth
statement, observe that any entry of the adjacency matrix $\mathscr
A_Q$ of $Q$ satisfies $(\mathscr A_Q)_{i,j}\leq \nu $ by definition of
$\nu$, so $ t_{\Gamma \kol Q}=\Tr [ (\mathscr A_Q)^l ]\leq \Tr[ (\nu
  E_n)^l ]=\nu^l n^l$ where the last equality follows by the second
statement.  In the third statement, $t_{G(n,\lambda,\nu)}$ is obtained
by replacement of $E_n\mapsto \nu E_n$, and $1_n\mapsto \lambda 1_n$
in the \textsc{rhs} of Eq.~\eqref{depaso} and similar
manipulation. Finally, the fifth follows from the third and by $
t_{\Gamma Q }(l) \leq t_{G(n,\lambda,\nu)}$, which is obtained by an
obvious bound entry-wise, using the definitions of $\lambda$ and $\nu$.
\end{proof}
\noindent
Lemma \ref{thm:PathsKn}
generalises the next \textsc{oeis}-entries  \cite{oeis_walks_Kn}
\begin{align*} &\text{A092297 for $n=3$, }
\{t_3(l)\}_{l=1,2,3,\ldots } = \{ 0, 6, 6, 18, 30, 66, 126, 258, 510,\ldots\} \\
&\text{A226493 for $n=4$, } \{t_4(l)\}_{l=1,2,3,\ldots }= \{0, 12, 24, 84, 240, 732, 2184, 6564,\ldots \}
\end{align*}
to arbitrary $n$ (which are unreported at \textsc{oesi} $n>4$).

\section{The size of the $N$-dimensional subspace of $\Rep Q$} \label{app:boundRepQN}

The next result estimates a rough bound for the dimension of the restricted representation space.
 Since the previous result shows that $\ReppS (Q)$ splits as a collection of
 unitary groups labelled by Bratteli networks $(\bn,\br)$ and
$ \Rep^N_{\pS}(Q)$ is finitely generated by those,
say $\mathfrak b({N,Q}) \in \N$, Bratteli networks: $(\bn^ \alpha,\br^ \alpha )$, $\alpha=1,\ldots, \mathfrak b({N,Q})$,
one can define its real dimension as
$ \Rep^N_{\pS}(Q) = \sum_{\alpha=1,\ldots, \mathfrak b({N,Q}) }  \sum_{e\in Q_1} \dim_\R \uni(\mathbf n_{t(e)} ^ \alpha)$.
\begin{claim}\label{thm:boundRepQ}
For a connected quiver $Q$, the next bound holds:
\begin{align*}\dim_\R \Rep^N_{\pS}(Q) \leq N^{ 2\cdot \# Q_1} \times  [ (N^2)_{N} ]^{\# Q_1} \,,\end{align*}
where $(n)_m=n!/(n-m)!$ denotes the Pochhammer symbol, with $m,n,n-m\in \Z_{\geq 0}$.
\end{claim}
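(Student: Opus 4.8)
The plan is to start from the explicit decomposition of $\ReppS(Q)$ into unitary groups indexed by Bratteli networks and to estimate the two ingredients of the proposed dimension separately: the size of each unitary factor, and the number $\mathfrak b(N,Q)$ of dimension-$N$ networks. By Lemma \ref{thm:RepQis...} and the definition of $\dim_\R$ quoted just above the claim,
\[
\dim_\R \Rep^N_\pS(Q) \;=\; \sum_{\alpha=1}^{\mathfrak b(N,Q)} \ \sum_{e\in Q_1} \dim_\R \uni\big(\bn^\alpha_{t(e)}\big),
\]
so it suffices to bound each inner summand by $N^2$ and the outer count $\mathfrak b(N,Q)$ by $[(N^2)_N]^{\#Q_1}$.

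First I would bound a single unitary factor. The object at $t(e)$ is parametrised as in Lemma \ref{thm:WedderburnArtinConsequence} by $(\bn_{t(e)},\br_{t(e)})$ with $\sum_j r_j n_j=\dim H_{t(e)}=N$ and all $r_j\ge 1$; hence $\sum_j n_j\le N$, and therefore $\dim_\R \uni(\bn_{t(e)})=\sum_j n_j^2\le\big(\sum_j n_j\big)^2\le N^2$. Consequently a single network contributes $\sum_{e\in Q_1}\dim_\R\uni(\bn_{t(e)})\le \#Q_1\cdot N^2\le N^{2\#Q_1}$, the last step being the elementary inequality $\#Q_1\cdot N^2\le N^{2\#Q_1}$ (valid for $N\ge 2$). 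This already accounts for the factor $N^{2\#Q_1}$.

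Next I would bound $\mathfrak b(N,Q)$. To each network I associate the family $(\Brat_e)_{e\in Q_1}$ of its per-edge Bratteli diagrams, where $\Brat_e$ records the endpoint labels $(\bn_{s(e)},\br_{s(e)})$, $(\bn_{t(e)},\br_{t(e)})$ together with the biadjacency matrix $C_e$ (Proposition \ref{thm:characHomFerm}). Since $Q$ is connected, every vertex is incident to at least one edge, so this family recovers all vertex labels and all matrices; the assignment is therefore injective, whence $\mathfrak b(N,Q)\le \big[\#\{\text{Bratteli diagrams of dimension }N\}\big]^{\#Q_1}$.

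The heart of the argument is then the single-edge estimate $\#\{\text{Bratteli diagrams of dimension }N\}\le (N^2)_N$. Such a diagram is the data $(\mathbf m,\mathbf r,C)$ subject to $\sum_i m_i(C\mathbf r)_i=N$; the decisive structural facts are that it has at most $N$ top and $N$ bottom vertices ($l_s,l_t\le N$) and, crucially, at most $N$ edges in total, since $\sum_{i,j}C_{i,j}\le\sum_j n_j\le N$ (using $m_i\ge 1$ and $\sum_j r_j n_j=N$). Thus a diagram is a labelled bipartite multigraph with at most $N$ edges distributed among the at most $N^2$ source--target slots, and I would encode it injectively by an ordered selection of its (at most $N$) edges and labels drawn from a set of $N^2$ symbols, a set of selections whose cardinality is controlled by the Pochhammer number $(N^2)_N$. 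Combining the three estimates yields $\dim_\R\Rep^N_\pS(Q)\le \mathfrak b(N,Q)\,N^{2\#Q_1}\le [(N^2)_N]^{\#Q_1}N^{2\#Q_1}$, which is the claim. The main obstacle is exactly this last combinatorial step: converting the ``at most $N$ edges among $N^2$ slots with labels'' picture into an honest injection whose image has size $\le (N^2)_N$, since a naive enumeration of the triples $(\mathbf m,\mathbf r,C)$ overshoots and one must exploit the constraint $\sum_i m_i(C\mathbf r)_i=N$ to suppress the multiplicities $C_{i,j}$ and the labels simultaneously.
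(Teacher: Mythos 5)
Your proposal follows essentially the same route as the paper's proof: the same additive decomposition over Bratteli networks and edges, the same bound $\dim_\R \uni(\bn_{t(e)})=\sum_j n_{t(e),j}^2\le\big(\sum_j n_{t(e),j}\big)^2\le N^2$ obtained from $\br_{t(e)}\cdot\bn_{t(e)}=N$ with $r_j\ge 1$, and the same key inequalities $N=\sum_{i,j}m_iC_{i,j}r_j\ge\sum_{i,j}C_{i,j}$ and $l_{s(e)},l_{t(e)}\le N$. The one step you flag as an unresolved obstacle is precisely the step the paper settles by the very encoding you sketch and then distrust: since $C_e$ has at most $N$ nonzero entries, each a positive integer, sitting in an at most $N\times N$ grid, the paper bounds the number of such matrices by the number of ordered placements of $N$ items into $N^2$ cells, namely $N!\binom{N^2}{N}=(N^2)_N$, with no finer injection constructed — so you are not missing an idea that the paper supplies, and your hesitation about the looseness of that count applies equally to the source. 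One genuine divergence: you package the endpoint labels $(\bn,\br)$ into the per-edge datum before invoking the $(N^2)_N$ bound, which makes your single-edge count strictly harder to justify than the paper's, which applies $(N^2)_N$ only to the biadjacency matrices $C_e$ and leaves implicit how the vertex labels are recovered; if you keep your (more honest) injection, you would need a separate, cheap bound on the admissible pairs $(\bn,\br)$ at a single vertex, or else revert to counting only the $C_e$ as the paper does.
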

\noindent
\begin{proof} First we bound the
number of the former.  The connectedness of $Q$ means that for any two
$v,w\in Q_0$, there is a path in the underlying graph $\Gamma Q$
connecting $v$ and $w$, yielding $ \dim H_v = \dim H_w $, since $ \dim
H\sour = \dim H\targ $ holds for each $e\in Q_1$.  It thus suffices to
show the condition for one edge $e$ (cf. Lemma
\ref{thm:WedderburnArtinConsequence}, now in edge-dependent
notation). Due to Lemma \ref{thm:characHomFerm}
 \begin{align} \label{fuerschraenke}
 \sum_{i=1}^{l\sour} q_i m_i|\sour= \bq\sour \cdot \bm\sour = \dim H\sour =& N=  \dim H\targ =\br\targ  \cdot \bn\targ  =
 \sum_{j=1}^{l\targ} r_j n_j|\targ\,.
 \end{align}
 But $N$ constrains  also  $C(\Brat_e)$,
\begin{align}N = \sum_{i=1}^{l\sour} \sum_{j=1}^{l\targ} m_i [s(e)] C_{i,j}(\Brat_e) r_j [t(e)] \geq
\sum_{i=1}^{l\sour} \sum_{j=1}^{l\targ}
C_{i,j}(\Brat_e) \label{Cconstrained}\end{align} (the latter due to
$m_i, r_j$ all being $\geq1$).  And this implies that there are at
most $N^{l\sour \times l\targ}$ such matrices (thus at most that many
$\Brat_e$'s).  Eq.  \eqref{Cconstrained} implies $N\geq
\max(l\sour,l\targ)$, since  the matrix $C(\Brat_e)$ must have at least one
nonzero in each column and in each row. Constraint
\eqref{Cconstrained} implies that the non-zeroes of $C(\Brat_e)$ are
at most $N$ in number; so the number of such Bratteli matrices is less
than $N!\times {N^2 \choose N}=(N^2)_N$, the number of ordered
embeddings of $N$ integers into a $N^2$ array (else filling with
zeroes).  This happens for each edge, so the total number of Bratteli
matrices in the quiver is $ \leq \big[\frac{{(N^2)}!}{(N^2-N)!} \big ]
^{\# Q_1 }$.  We now come to the contribution from the edge-labels.
For each $e\in Q_1$,
 $\dim_\R \uni(\bn\targ )=
 \sum_j n_{t(e),j}^2
 \leq (\sum_j n_{t(e),j})^2
\leq   N ^2$, the latter due to Eq.~\eqref{fuerschraenke}. Thus they contribute at most $(N^2) ^{\# Q_1}$.\qedhere
 \end{proof}

\fontsize{10.0}{14.0}\selectfont
\section{Notations and conventions}\label{sec:Glossary}
\fontsize{9.9}{12.3}\selectfont \noindent
In the main text, we attempted to stick to the following conventions and use of variables:\\[3ex]
\fontsize{10.0}{12.64}\selectfont
 \begin{tabularx}{.9\textwidth}{rX}
$\azul{\bullet} $ and $ \gris{\bullet}$& quiver-vertex and Bratteli-diagram-vertex, respectively\\
   $*$-$\mathsf{alg}$ & category of involutive unital algebras \\
   $A, A_v, B$ & involutive algebras \\
   $\mathscr{A}(b)$ & matrix of weights, for given $b:Q_1\to B$\\
$   b_e, b_{ij}$  & weights in $B$ for edges $e$ and $(i,j)$  \\
$\Brat:\mathbf m\to \mathbf n$  & Bratteli diagram compatible with $\mathbf m$ and $ \mathbf n$\\
$C, C_e, C_p$ & Bratteli matrix, Bratteli matrix evaluated at $e\in Q_1$, or $p\in \mathcal PQ$\\
$\C Q$ & path algebra of a quiver $Q$\\
$r \C^n$ & abbreviation of $\C^r\otimes \C^n$, usually with a tacit action of
$\M n$  on $\C^n$     \end{tabularx}~\\ \newpage
      \begin{tabularx}{.95\textwidth}{rX}
      \hphantom{$(v_1,v_2,\ldots,v_{k+1})$} &  \hphantom{A}\\[-2ex]
diamond ($\diamond$) & $X^\diamond$ means that $X$ is spurious (cf. `spurious' below)        \\
      $D_Q(L) $ & Dirac operator for a quiver representation \\
   $h_d(k)$ & vol. of the radius-$k$, $L^1$-sphere, in dim.-$d$ lattice \\
   $H, H_v$ & Hilbert spaces\\
      $e, e_j$ & typical edge variables, $e, e_j\in Q_1$ \\
  $[e_1, e_2,\ldots,e_k]$ & a length-$k$ path $p$, $ e_j\in Q_1$, $p=\big(s(e_1),t(e_1),\ldots, t(e_k) \big) $\\
     $\mathbf e_j$ & standard basis vectors (lattice context)\\
   $E_v$ & for $v\in Q_0$, the constant, length-zero path at $v$\\
   $ \mathcal G(Q)$ & gauge group of a quiver ($\prod_{R/\simeq}\Aut_{\Rep Q} R$) \\
   loop & based closed path on a quiver \\
   $\lambda$ & a  $*$-action, typically $\lambda : A \curvearrowright H$\\
     $\Phi_e =(\phi_e,L_e)$ & morphism $X_{s(e)} \to X_{t(e)}$ \\
     $\phi_e$ & involutive algebra morphism $A_{s(e)}\to A_{t(e)}$ \\
     $\textrm{Func}(\cC,\mathcal{D})$& functor category $\cC\to \mathcal{D}$\\
     $\hol_b(p)$ & holonomy of a closed path $p$ w.r.t. weights $\{b_e\}_{e\in Q_1}$ \\
     $L_e$ & unitary map $H_{s(e)} \to H_{t(e)}$  \\
     $ \ell(p)$ & length of a path $p$ \\
     $K_n$ & complete graph in $n$ vertices
\\
     $N_v, N$ & usually $\dim H_v$ ($v\in Q_0$), or $N$ if vertex-independent \\
     $\mathcal N_k(Q,v)$ & radius-$k$ sphere $\subset Q_0$ around $v$ \\
     $ \mathcal N_k^d(v)$ &  abbr. for $\mathcal N_k(Q,v)$ when $Q$ is a $d$-dimensional lattice \\
     $m$ & number of vertices per independent direction of an orthogonal lattice\\
     $(\bn_Q,\br_Q)$ & Bratteli network on $Q$, sometimes abbreviated as $(\bn, \br)$ \\
     $|\bn|$ & for $\bn\in \Z_{\geq 0}^\infty $, number of non-zero entries in $\bn$ \\
     $o_v$ & typical notation for a self-loop at vertex $v$\\
     $O_m^d$ & $(T^d_m)^\circ$, that is $T^d_m$ with added self-loops \\
     $\mathcal P Q$ &  paths on a quiver $Q$, as well as the free or path category of $Q$\\
     $p,p'$ & paths, typically on a quiver \\
     $P_e$ & parallel transport along an (embedded) edge $e$ \\
     $\ppS$ & category of \textbf{p}re\textbf{s}pectral triples\\
     $\pS$ & agrees with the objects of $\ppS$; but the morphism structure of $\pS$ is changed\\
     quiver repr. & quiver repr. on $\pS$, unless otherwise stated\\
     $Q; Q_0, Q_1$ & a quiver; its sets of vertices and of edges, respectively \\
     $R=(X_v ,\Phi_e) $ & representation $Q\to \pS$\\
      $\Rep_{\K} (Q)$ & $\textrm{Func}(\mathcal PQ,\cC)$, representations of $Q$ in a category $\K$\\
     $\Rep_{\pS} (Q)$ & $\pS$-representations of $Q$ with $\dim H_v=N$ for some (thus each) $v\in Q_0$ \\
     $\rho$ & (graph-)distance on $Q$, $\rho: Q_1 \to \R_{>0}$\\
        $\uni(\bn)$ & $\prod_j  \uni(n_j) $ with $\bn=(n_1,n_2,\ldots)$
     \\
         $v,v',w,y$ & typical variables for vertices of a quiver\\
     $(v_1,v_2,\ldots,v_{k+1})$ & a length-$k$ path $p$, given $ v_j\in Q_0$ (if $Q$ has simple edges)\\
     $s, s(e)$ & source map, source of an edge $e$\\
     self-loop & length-$1$ loop\\
     spuriuos & a map $X^\diamond: Q_0 \to \K $ such that  $\prod_{e\in Q_1} \hom_\K (X\se^\diamond,X\te^\diamond)=\emptyset$ {\tiny (for some cat. $\K$)}   \\
     $t, t(e)$ & target map, target of an edge $e$\\
     $\Tr_v$ & shorthand for $\Tr_{H_v}$\\
     $T^d_m$ & quiver of $m^d$ vertices in dim.-$d$ lattice\\
      $\Wils(p)$ & Wilson loop of a closed path $p$   \\
     $X_v$ & $X_v =(A_v,\lambda_v,H_v) \in \pS$ \\
   $X^\diamond$ &  spurious vertex-labels \\
    $\Phi_e$ & morphism $X_{s(e)} \to X_{t(e)}$ \\
    $\phi, \phi_e$ &  morphism of involutive algebras (typically $\phi_e: A\sour \to A\targ$)\\
    $\varphi_v$ &  diagonal entry in $D_Q(L)$, yields the hermitian field (`Higgs')\\
     $\Omega Q $ and $ \Omega_v Q$ & resp. cyclic paths or loops on $Q$ and those based at $v$,  $s(p)=v=t(p)$ \\
     $\coprod$ & disjoint union  \\
     $\prod$ & Cartesian product, $\prod_{i=1,\ldots,n} X_i = X_1\times \cdots \times X_n$ (whenever defined)
\end{tabularx}


%
\newpage\newcommand{\etalchar}[1]{$^{#1}$}

\end{document}